\renewcommand\algorithmicthen{}
\algnewcommand{\IIf}[1]{\State\algorithmicif\ #1\ \algorithmicthen}
\algnewcommand{\EndIIf}{\unskip\ \algorithmicend\ \algorithmicif}
\algnewcommand{\IElse}{\unskip\ \algorithmicelse \unskip\ }
\newcolumntype{C}{>{\centering\arraybackslash}X}
\newcolumntype{L}{>{\raggedright\arraybackslash}X}
\newcolumntype{R}{>{\raggedleft\arraybackslash}X}
\newcommand{\Image}[1]{\mathop{\mathrm{Im}}\left(#1\right)}
\newcommand{\norm}[1]{\left\lVert #1 \right\rVert}
\newcommand{\dist}[2]{\mathop{\mathrm{dist}}\left(#1, #2\right)}
\newtheorem{proposition}{Proposition}
\newtheorem{remark}{Remark}
\newtheorem{theorem}{Theorem}
\newtheorem{lemma}{Lemma}
\begin{document}
\title{A Globally Convergent Distributed Jacobi Scheme for Block-Structured Nonconvex Constrained Optimization Problems}

\author{Anirudh~Subramanyam,
        Youngdae~Kim,
        Michel~Schanen,
        Fran\c{c}ois~Pacaud,
        and~Mihai~Anitescu,~\IEEEmembership{Member,~IEEE}%
\thanks{The authors are with the Mathematics and Computer Science Division, Argonne National Laboratory, Lemont IL 60439. This work was supported by the U.S. Department of Energy, Office of Science, under contract DE-AC02-06CH11357.}%
}

\maketitle

\begin{abstract}
Motivated by the increasing availability of high-performance parallel computing, we design a distributed parallel algorithm for linearly-coupled block-structured nonconvex constrained optimization problems. Our algorithm performs Jacobi-type proximal updates of the augmented Lagrangian function, requiring only local solutions of separable block nonlinear programming (NLP) problems. We provide a cheap and explicitly computable Lyapunov function that allows us to establish global and local sublinear convergence of our algorithm, its iteration complexity, as well as simple, practical and theoretically convergent rules for automatically tuning its parameters. This in contrast to existing algorithms for nonconvex constrained optimization based on the alternating direction method of multipliers that rely on at least one of the following: Gauss-Seidel or sequential updates, global solutions of NLP problems, non-computable Lyapunov functions, and hand-tuning of parameters. Numerical experiments showcase its advantages for large-scale problems, including the multi-period optimization of a 9000-bus AC optimal power flow test case over 168 time periods, solved on the Summit supercomputer using an open-source Julia code.
\end{abstract}

\begin{IEEEkeywords}
distributed optimization, augmented Lagrangian, nonconvex optimization
\end{IEEEkeywords}

\IEEEpeerreviewmaketitle

\section{Introduction}

Block-structured nonlinear optimization models are ubiquitous in science and engineering applications.
Some examples include nonlinear model predictive control, multi-stage stochastic programming, supervised machine learning, optimization with differential-algebraic equations, as well as network control with geographically distributed agents.
These models are used in a wide variety of areas including power systems, telecommunications, sensor networks, smart manufacturing, and chemical process systems, to name but a few.
The block structure in several of these models have the following form, which is the subject of the present paper.
\begin{equation}\label{eq:nlp_general}
    \begin{aligned}
        \displaystyle\mathop{\text{minimize}}_{x_1, \ldots, x_T} \;\; & \displaystyle \sum_{t = 1}^T f_t(x_t) \\
        \text{subject to} \;\;& \displaystyle x_t \in X_t, \;\; t \in \{1, 2, \ldots, T\}, \\
        & \displaystyle \sum_{t = 1}^T A_t x_t = b,
    \end{aligned}
\end{equation}
where $X_t$ %
are compact (possibly nonconvex) sets, $A_t \in \mathbb{R}^{m \times n_t}$ are matrices, and $f_t: \mathbb{R}^{n_t} \mapsto \mathbb{R}$ %
are continuously differentiable (possibly nonconvex) functions.

In formulation~\eqref{eq:nlp_general}, the decision variables are grouped into $T$ blocks $(x_1, x_2, \ldots, x_T)$ coupled only via the linear constraints $\sum_{t = 1}^T A_t x_t = b$.
Such linearly coupled block-structured models arise in a wide variety of applications.
For example, consider a discrete-time dynamical system:
\[
x_{t+1} = \phi(x_t, u_t) \; ,
\]
where $x_t \in \mathbb{R}^{n_x}$ and $u_t \in \mathbb{R}^{n_u}$ are the vectors of states and inputs, respectively, and $\phi :\mathbb{R}^{n_x} \times \mathbb{R}^{n_u} \mapsto \mathbb{R}^{n_x}$ is some smooth state transition function.
In such systems, the optimal control problem that must be solved in the context of model predictive control \cite{grune2017nonlinear} can be brought into the form of \eqref{eq:nlp_general} by introducing additional variables $\tilde{x}_{t} \in \mathbb{R}^{n_x}$ and additional constraints:
\[
x_{t+1} = \tilde{x}_{t}, \;\; \tilde{x}_{t} = \phi(x_t, u_t),
\]
and by defining the $t^\mathrm{th}$ block to be $X_t = \{(x_t, u_t, \tilde{x}_t) \in \mathbb{R}^{n_x} \times \mathbb{R}^{n_u} \times \mathbb{R}^{n_x}: \tilde{x}_{t} = \phi(x_t, u_t) \}$.
The $T$ blocks are then coupled via the linear equations $x_{t+1} = \tilde{x}_{t}$, $t=1,2,\ldots,T-1$.

The goal of this paper is to devise a parallel algorithm that can solve instances of formulation \eqref{eq:nlp_general} when either the number of blocks $T$ or the number of variables/constraints in individual blocks $X_t$ is large.
In such cases, memory or storage requirements may prohibit the direct solution of \eqref{eq:nlp_general} using conventional nonlinear programming (NLP) solvers.
Also, structure-agnostic solvers may not be able to exploit any available distributed parallel computing capabilities, resulting in excessively high computational times that can be detrimental in several applications including model predictive control.

\subsection{Literature review}

A popular approach to solving large-scale instances of \eqref{eq:nlp_general} is to decompose the problem iteratively into individual block subproblems that are easier to handle numerically.
This decomposition can be done either at the linear algebra level of an interior point method \cite{gondzio2009exploiting,chiang2014structured,kourounis2018toward} or by utilizing (augmented) Lagrangian functions to solve block-separable dual problems that are derived using (local) convex duality \cite{bertsekas2014constrained,houska2016augmented,hours2016parametric}.
The alternating direction method of multipliers (ADMM) belongs to the latter class of methods and has seen a recent surge in popularity because of its suitability for distributed computation; e.g., see \cite{boyd2011distributed}.
However, the vast majority of provably convergent ADMM approaches for solving \eqref{eq:nlp_general} either exploit convexity in the objective function $f_t$ or constraints $X_t$ or they are tailored for specific instances of \eqref{eq:nlp_general}; e.g., see \cite{li2015global,lin2015global,magnusson2016,hong2016convergence,hong2017linear,melo2017iteration}.

A recent body of literature~\cite{bolte2014proximal,liu2019linearized,wang2019global,jiang2019structured,sun2019two,tang2021fast,yang2020proximal,shi2020penalty,zhu2020first,harwood2021analysis} has analyzed the global and local convergence of ADMM approaches for non-convex instances of formulation \eqref{eq:nlp_general}.
It is well-understood by now (e.g., see \cite{wang2019global}) that for $T > 2$, convergence requires the existence of a block variable  that is constrained only by the linear coupling equation, and whose objective function is globally Lipschitz differentiable.
Moreover, it must be coupled in such a way that it can ``control'' the iterates of the coupling constraints' dual variables (see Remark~\ref{rem:global_convergence} later in the paper).

One way to achieve this is to first introduce an additional block of slack variables $z \in \mathbb{R}^m$ that relax the linear coupling constraints, and then drive $z$ to $0$ by adding a smooth quadratic penalty term in the objective function:
\begin{equation}\label{eq:nlp}
    \begin{aligned}
        \displaystyle\mathop{\text{minimize}}_{x_1, \ldots, x_T, z} \;\;& \displaystyle \sum_{t = 1}^T f_t(x_t) + \frac{\theta}{2} \norm{z}^2 \\
        \text{subject to} \;\;& \displaystyle z \in \mathbb{R}^{m}, \;\; x_t \in X_t, \;\; t \in \{1,2, \ldots, T\}, \\
        & \displaystyle \sum_{t = 1}^T A_t x_t + z = b,
    \end{aligned}
\end{equation}
where the penalty coefficient $\theta$ is a function of the target tolerance $\epsilon > 0$ for satisfying the linear coupling constraints.
In \cite{jiang2019structured}, this idea is exploited to design algorithms that globally converge to $\epsilon$-stationary points of \eqref{eq:nlp_general} in $O(\epsilon^{-6})$ iterations.
Moreover, the algorithms require the global solution of NLP problems~\cite[Remark~3.8]{jiang2019structured}, which can be numerically demanding when the local subproblems are nonconvex.
A similar quadratic penalty idea is adopted in \cite{yang2020proximal} although it requires the augmented Lagrangian function (with respect to all nonlinear and coupling constraints) to satisfy the Kurdyka-\L{}ojasiewicz property \cite{bolte2014proximal}; furthermore, that analysis does not reconcile the penalty formulation \eqref{eq:nlp} with the original formulation \eqref{eq:nlp_general}.
Each iteration linearizes the augmented Lagrangian function around the previous iterate that is then alternately minimized with respect to the $T$ blocks.
Other approaches using local linear and convex approximations of the nonlinear constraints or of the augmented Lagrangian function, and methods establishing convergence under additional assumptions such as coercivity of the objective function, have been proposed in \cite{scutari2016parallel,liu2019linearized,yi2019linear,zhu2020first}.

Instead of using a quadratic penalty, \cite{sun2019two,tang2021fast} replace the objective in~\eqref{eq:nlp} by the augmented Lagrangian function
\begin{equation}\label{eq:two-level-admm-outer-level}
\sum_{t = 1}^T f_t(x_t) + \beta^\top z + \frac{\theta}{2} \norm{z}^2,
\end{equation}
and drive $z$ to $0$ by iteratively updating the Lagrange multipliers $\beta$, where each so-called outer loop iteration uses another inner-level iterative ADMM scheme that minimizes \eqref{eq:two-level-admm-outer-level} subject to the constraints of \eqref{eq:nlp}.
An alternative two-level decomposition for (possibly nonlinear) coupling constraints is proposed in \cite{shi2020penalty} but it depends intimately on randomized block updating of the augmented Lagrangian function which in turn requires locally tight upper bounds of the objective function with respect to each variable block, while fixing the other blocks.

All of the aforementioned algorithms rely on a Gauss-Seidel updating scheme, where the variable blocks are iteratively optimized in a particular sequence.
Specifically, when updating a given variable block (say $x_t$), the values of all other blocks that appear earlier in the sequence (e.g., $x_1, \ldots, x_{t-1}$) must be fixed to their newest values.
The overall convergence relies critically on the sequential nature of this update.
Unfortunately, this makes these algorithms unsuitable for distributed parallel computations.
A common workaround (e.g., see \cite{houska2016augmented,deng2017parallel}) to enable parallel computation is to equivalently reformulate \eqref{eq:nlp_general}, \eqref{eq:nlp} by introducing additional variables $y_t$ as follows:
\begin{equation}\label{eq:nlp_variable_splitting}
    \begin{array}{r@{\;\;}l}
        \displaystyle\mathop{\text{minimize}}_{\substack{x_1, \ldots, x_T\\y_1, \ldots, y_T}} & \displaystyle \sum_{t = 1}^T f_t(x_t) \\
        \text{subject to} & \displaystyle x_t \in X_t, \;\; A_tx_t = y_t, \;\;  t \in \{1,2, \ldots, T\}, \\
        & \displaystyle \sum_{t = 1}^T y_t = b,
    \end{array}
\end{equation}
The augmented Lagrangian function with respect to the last equation is completely decomposable in the $x$-variables:
\begin{equation}\label{eq:nlp_variable_splitting_aug_lag}
\sum_{t = 1}^T f_t(x_t) + \beta^\top \Big(\sum_{t = 1}^T y_t - b \Big) + \frac{\theta}{2} \Big\|\sum_{t = 1}^T y_t - b\Big\|^2 \; .
\end{equation}
Therefore, the reformulation \eqref{eq:nlp_variable_splitting} can be interpreted as a two-block model where the $x$- and $y$-variables constitute respectively the first and second blocks.
Since \eqref{eq:nlp_variable_splitting_aug_lag} admits a closed-form minimization with respect to $y$ (for fixed values of $x$), one can use any of the aforementioned Gauss-Seidel algorithms to enable parallel implementation.
However, this reformulation substantially increases the problem dimension in terms of both the number of variables and constraints.
This can slow down convergence and we demonstrate this empirically when we compare it with our proposed method.

Recently, \cite{liu2019linearized} suggested an alternative strategy to enable parallel computation.
Similar to the prox-linear method \cite{chen1994proximal}, it proposes to linearize around the previous iterate the quadratic penalty term in the augmented Lagrangian function, which makes the latter decomposable with respect to the $x$ blocks.
Although the method is shown to asymptotically converge for \eqref{eq:nlp} (no convergence rate type is provided), %
it can be slow to converge in practice for models with highly nonconvex constraints.
Indeed, that analysis suggests an augmented Lagrangian penalty parameter that scales as $O(\theta^4)$ posing numerical challenges.
Empirically, that method failed to converge for the smallest of our test instances with $T = 3$.

\subsection{Contributions}
We propose a novel distributed Jacobi scheme for solving the block-structured optimization problem \eqref{eq:nlp_general} with smooth nonconvex objective functions $f_t$ and constraint sets $X_t$ that are algebraically described by smooth nonconvex functions $c_t$.
The scheme only requires local solutions of individual block NLP problems.
In contrast to solving the higher-dimensional reformulation \eqref{eq:nlp_variable_splitting}, our algorithm directly performs Jacobi updates of the augmented Lagrangian function with respect to the linear coupling constraints in \eqref{eq:nlp}, in which optimizing a single variable block does not require the newest values of the other blocks.
This makes it particularly suitable for parallel computation, where the cost of each iteration can be reduced by a factor of $T$ compared to existing Gauss-Seidel schemes. %

We show that the algorithm converges globally to an $\epsilon$-approximate stationary point of \eqref{eq:nlp_general} in no more than $O(\epsilon^{-4})$ iterations and locally converges to an approximate local minimizer at a sublinear rate under mild assumptions.
The proof uses an easy-to-compute Lyapunov function that does not require a (typically unknown) local minimizer.
We supplement the analysis by showing (empirically) that convergence fails if the proximal weights in the algorithm are not chosen appropriately.
The algorithm can be interpreted as a nonconvex constrained extension of the Jacobi algorithms proposed in \cite{banjac2017jacobiConvex,chatzipanagiotis2017convergence,deng2017parallel,melo2017iteration}, and as a Jacobi extension of the Gauss-Seidel algorithm for nonconvex problems proposed in \cite{jiang2019structured}.
In contrast to the majority of existing approaches for nonconvex problems,
we provide a practical and automatic parameter tuning scheme, an open-source Julia implementation (that can be downloaded from \url{https://github.com/exanauts/ProxAL.jl}), and an empirical demonstration of convergence on a large-scale multi-period optimal power flow test case.

\subsection{Notation, Assumptions and Preliminaries}
For any integer $T$, we use $[T]$ to denote the index set $\{1, 2, \ldots, T\}$.
We use $x$ (without subscript) as shorthand for the entire vector of decisions $(x_1, \ldots, x_T) \in \mathbb{R}^n$, and $X$ as the corresponding shorthand for the feasible set $X_1 \times \ldots \times X_T$, where we define $n \coloneqq \sum_{t = 1}^T n_t$.
For an arbitrary vector $w = (w_1, \ldots, w_T) \in \mathbb{R}^{n}$, we define
$
Aw \coloneqq \sum_{t=1}^T A_t w_t
$,
and for any $t \in [T]$, we define
$
A_{\neq t}w_{\neq t} \coloneqq \sum_{s \in [T]\setminus\{t\}} A_{s} w_{s}
$.
We let $D \coloneqq \mathrm{diag}(A_1, \ldots, A_T) \in \mathbb{R}^{Tm \times n}$ denote the block-diagonal matrix with $A_1$, $\ldots$, $A_T$ along its diagonal.
Iterates at the $k^\text{th}$ iteration are denoted with superscript $k$.
The general normal cone~\cite[Definition 6.3]{RockafellarWets1998} to a set $X$ at a point $x \in X$ is denoted as $N_{X}(x)$.
For a vector $z$ and matrix $M$, we use $\norm{z}$ and $\norm{M}$ to denote their Euclidean and spectral norms, respectively.
When $M$ is positive (semi-)definite, we use $\norm{z}_M$ to denote the (semi-)norm $\sqrt{z^\top M z}$.
For a vector $z \in \mathbb{R}^N$ and set $S \subseteq \mathbb{R}^N$, we define $\dist{z}{S} \coloneqq \min_{w \in S} \norm{z - w}$.
For any $\epsilon > 0$, we let $B_\epsilon(z)$ denote the open Euclidean ball in $\mathbb{R}^N$ with center $z$ and radius $\epsilon$.
We use $\mathrm{I}$ and $\mathrm{0}$ to denote the identity and zero matrices, respectively; unless indicated otherwise, their dimensions should be clear from the context.
For two square symmetric matrices $M_1, M_2$, we use $M_1 \succ M_2$ ($M_1 \succeq M_2$) or $M_2 \prec M_1$ ($M_2 \preceq M_2$) to indicate that $M_1 - M_2$ is positive definite (positive semidefinite).

Throughout the paper, we make the following assumptions.
\begin{enumerate}[label=(A\arabic*),leftmargin=*]
    \item\label{assume:nonconvex_sets} $X_t$ %
    is non-empty and compact for all $t \in [T]$.
    \item\label{assume:nonconvex_functions} $f_t: \mathbb{R}^{n_t} \mapsto \mathbb{R}$ %
    is $C^2$ for all $t \in [T]$.
    \item\label{assume:full_rank_coupling} The matrix $A \coloneqq [A_1 \ldots A_T] \in \mathbb{R}^{m \times n}$ has full row rank.
    \item\label{assume:wellposed} Problem \eqref{eq:nlp_general} has a feasible solution.
\end{enumerate}

We say $x^* \in X$ is a stationary point of \eqref{eq:nlp_general}, if there exist Lagrange multipliers $\lambda^* \in \mathbb{R}^m$ satisfying:%
\begin{subequations}\label{eq:kkt_nlp_general}
    \begin{align}
        & %
        A x^* = b, \label{eq:kkt_nlp_general:coupling}\\
        & \nabla f_t(x_t^*) + A_t^\top \lambda^* \in -N_{X_t}(x_t^*), \;\; t \in [T]. \label{eq:kkt_nlp_general:stationarity_x}
    \end{align}
\end{subequations}
By introducing the primal and dual residual functions, $\pi : X \mapsto \mathbb{R}$ and $\delta_t : X_t \times \mathbb{R}^m \mapsto \mathbb{R}$, respectively, conditions \eqref{eq:kkt_nlp_general} can be stated as $\pi(x^*) = 0$ and $\delta_t(x_t^*, \lambda^*) = 0$, $t \in [T]$, where%
\begin{subequations}\label{eq:residual}
\begin{align}
    \pi(x) &= \norm{%
        Ax - b},
    \label{eq:residual_primal} \\
    \delta_t(x_t, \lambda) &= \dist{\nabla f_t(x_t) + A_t^\top \lambda}{-N_{X_t}(x_t)}, t \in [T] \label{eq:residual_dual}
\end{align}
\end{subequations}
Under an appropriate constraint qualification, the above are equivalent to the Karush-Kuhn-Tucker (KKT) conditions, and they must be necessarily satisfied if $x^*$ is a local solution of \eqref{eq:nlp_general}; see \cite[Lemma~12.9]{nocedal_wright_2006} for a discussion of the linear independence constraint qualification (LICQ).

The augmented Lagrangian function of \eqref{eq:nlp} with respect to its coupling constraints is parameterized with penalty parameters $\rho, \theta > 0$ and is defined as follows (with domain $X \times \mathbb{R}^m \times \mathbb{R}^m$):
\begin{equation}\label{eq:auglag_nlp}
    \begin{aligned}
        \mathcal{L}(x, z, \lambda) &= \sum_{t = 1}^T f_t(x_t) + \frac{\theta}{2} \norm{z}^2 \\
        &\phantom{=} + \lambda^\top \left[ %
        Ax + z - b\right] + \frac{\rho}{2} \norm{%
            Ax + z - b}^2.
    \end{aligned}
\end{equation}
For convenience, we let $\mathcal{L}(x_t; \bar{x}_{\neq t}, \bar{z}, \bar{\lambda})$ denote the above function with domain $X_t$ that is obtained by fixing all $x_{s}$ variables to $\bar{x}_{s}$, $s \in [T] \setminus \{t\}$, $z$ to $\bar{z}$, $\lambda$ to $\bar{\lambda}$, and ignoring any constants:
\begin{equation}\label{eq:auglag_nlp_fixed_xt}
    \begin{aligned}
        \mathcal{L}(x_t; \bar{x}_{\neq t}, \bar{z}, \bar{\lambda}) &= f_t(x_t) + \bar{\lambda}^\top A_t x_t \\
        &\phantom{=} + \frac{\rho}{2} \norm{A_t x_t + %
        A_{\neq t} \bar{x}_{\neq t} + \bar{z} - b}^2.
    \end{aligned}
\end{equation}

\section{Algorithm}
Algorithm~\ref{algo:main} outlines the basic distributed scheme for solving formulation \eqref{eq:nlp_general}.
Here, $x^0, z^0$ and $\lambda^0$ denote the initial guesses of the primal and dual variables of the penalty formulation \eqref{eq:nlp}, whereas $\rho$, $\theta$ and $\tau_x, \tau_z$ are scalars denoting the penalty parameters in \eqref{eq:nlp} and its augmented Lagrangian function, and proximal weights corresponding to the $x$ and $z$ variables, respectively.

\begin{algorithm}[!htbp]
    \caption{Basic distributed proximal Jacobi scheme}
    \label{algo:main}
    \begin{algorithmic}[1]
        \renewcommand{\algorithmicrequire}{\textbf{Input:}}
        \renewcommand{\algorithmicensure}{\textbf{Output:}}
        \REQUIRE $x^0 \in X$, $z^0 \in \mathbb{R}^m$, $\lambda^0 \in \mathbb{R}^m$, scalars $\rho, \theta, \tau_x, \tau_z > 0$.
        \FOR {$k = 1, 2, \ldots$}
        \STATE Update $x$: compute a local minimizer $x_t^{k}$ of \eqref{eq:x-update} by warm-starting with $x_t^{k-1}$ and solve in parallel for $t \in [T]$:
        \begin{equation}\label{eq:x-update}
            \min_{x_t \in X_t} \mathcal{L}(x_t; x_{\neq t}^{k-1}, z^{k-1}, \lambda^{k-1}) + \frac{\tau_x}{2} \norm{x_t - x_t^{k-1}}_{A_t^\top A_t}^2
        \end{equation}
        \label{algo:main:x-update}
        \STATE Update $z$:
        \begin{equation}\label{eq:z-update}
            z^{k} = \frac{\tau_z z^{k-1} - \rho\left[Ax^{k} - b\right] - \lambda^{k-1}}{\tau_z + \rho + \theta}
        \end{equation}
        \label{algo:main:z-update}
        \STATE Update $\lambda$:
        \begin{equation}\label{eq:lambda-update}
            \lambda^{k} = \lambda^{k-1} + \rho \left[Ax^{k} + z^{k} - b\right]
        \end{equation}
        \label{algo:main:lambda-update}
        \ENDFOR
    \end{algorithmic}
\end{algorithm}

We note a few important points about Algorithm~\ref{algo:main}.
The optimization problems \eqref{eq:x-update} in line~\ref{algo:main:x-update} that compute new values of the $x$ variables can be solved completely in parallel using any local NLP solver.
These problems minimize the sum of the augmented Lagrangian function \eqref{eq:auglag_nlp} and the proximal terms, over the $x_t$ variables alone.
Similarly, the update step \eqref{eq:z-update} is the closed-form solution of the quadratic problem that is defined by minimizing the sum of the augmented Lagrangian function \eqref{eq:auglag_nlp} and the proximal terms, over the $z$ variables alone, and by fixing the $x$ and $\lambda$ variables to $x^{k}$ and $\lambda^{k-1}$, respectively.
Even though this update occurs sequentially after solving \eqref{eq:x-update}, our analysis and results remain unchanged if we use a variant of the algorithm where the former is performed in parallel using only information about $x^{k-1}$.
We omit this variant for ease of exposition.
Indeed, since the update step \eqref{eq:z-update} is relatively cheap compared to solving problem \eqref{eq:x-update}, we can compute the former along with the latter locally on each node of a parallel computing architecture.
Finally, the dual variables in \eqref{eq:lambda-update} are updated as per the standard augmented Lagrangian method.

\subsection{Global convergence}
We first establish that under an appropriate choice of the scalar parameters $\rho, \theta, \tau_x, \tau_z$, the sequence $\{x^k\}$ generated by Algorithm~\ref{algo:main} converges to a point satisfying the first-order stationarity conditions \eqref{eq:kkt_nlp_general}.
The key idea is to establish that the sum of the augmented Lagrangian function and proximal terms is a candidate Lyapunov function:
\begin{equation}\label{eq:lyapunov_function}
    \begin{aligned}
        \Phi(x, z, \lambda, \hat{x}, \hat{z}) = \mathcal{L}(x, z, \lambda)  &+ \frac{\tau_z}{4} \norm{z - \hat{z}}^2 \\
        &+ \sum_{t=1}^T \frac{\tau_x}{4} \norm{x_t - \hat{x}_t}^2_{A_t^\top A_t}.
    \end{aligned}
\end{equation}
Specifically, we show that the sequence $\{\Phi^k\}$, which consists of $\Phi$ evaluated at the iterates generated by the algorithm, decreases monotonically and is bounded from below by $\hat{\Phi}$:
\begin{align}
    \Phi^k &\coloneqq \Phi(x^k, z^k, \lambda^k, x^{k-1}, z^{k-1}), \; k \geq 1, \label{eq:lyapunov_sequence_definition} \\
    \hat{\Phi} &\coloneqq \min_{x \in X} \sum_{t = 1}^T f_t(x_t). %
    \label{eq:lyapunov_lower_bound}
\end{align}
In the remainder of the paper, for $k \geq 1$, we define $\Delta z^k \coloneqq z^k - z^{k-1}$ and similarly define $\Delta x^k$, $\Delta \lambda^k$, and $\Delta \Phi^k$.
Also, let $\Delta z^0 \coloneqq -\tau_z^{-1}(\lambda^0 + \theta z^0)$, $\Delta x^0 \coloneqq 0$ and $\Phi^0 \coloneqq \mathcal{L}(x^0, z^0, \lambda^0) + \frac{\tau_z}{4} \|\Delta z^0\|^2$.
Finally, we let $\pi^k \coloneqq \pi(x^k) $ and $\delta^k_t \coloneqq \delta_t(x_t^k, \lambda^k)$.

\begin{theorem}[Global convergence]\label{theorem:global_convergence}
    Suppose that assumptions \ref{assume:nonconvex_sets}--\ref{assume:wellposed} hold, and the sequence $\{(x^k, z^k, \lambda^k)\}$ is generated by Algorithm~\ref{algo:main} with parameters $\rho, \theta, \tau_x, \tau_z > 0$, such that
    \begin{equation}\label{eq:eta_definition}
        \eta_x \coloneqq \frac{\tau_x}{4} - \frac{(T-1)\rho}{2} > 0, \;
        \eta_z \coloneqq \frac{\tau_z}{4} - \frac{2(\theta + \tau_z)^2}{\rho} > 0.
    \end{equation}
    Then, for all $K \geq 1$, there exists $j \in [K]$ such that
    \begin{gather*}
        \pi^j \leq \sqrt{\frac{2\big(\Phi^1 - \hat{\Phi}\big)}{\theta} \left(1 + \frac{2(\theta + \tau_z)^2}{K\eta_z\rho} \right)} %
        \\
        \delta_t^j \leq (\rho + \tau_x) \norm{A_t} \sqrt{\frac{2(T+1)\big(\Phi^1 - \Phi^K\big)}{K\min\{\eta_x, \eta_z\}}}, \; t \in [T]. %
    \end{gather*}
    In particular, for any $\epsilon \in (0, 1)$, if we choose
    \begin{equation*}%
    \theta = \frac{1}{\epsilon^2}, \quad
    \rho = \frac{64}{\epsilon^2}, \quad
    \tau_x = \frac{256\,(T-1)}{\epsilon^2}, \quad
    \tau_z = \frac{2}{\epsilon^2}, \quad
    \end{equation*}
    then after $K = O(\epsilon^{-4})$ iterations, the iterates $\{(x^k, z^k, \lambda^k)\}$ converge to an $\epsilon$-stationary point of problem \eqref{eq:nlp_general} satisfying
    $\min_{j \in [K]} \max\{\pi^j, \max_{t \in [T]}\delta_t^j\} = O(\epsilon)$. %
\end{theorem}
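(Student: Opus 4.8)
The plan is to exhibit $\Phi$ as a genuine Lyapunov function for Algorithm~\ref{algo:main}: show that $\{\Phi^k\}$ decreases by a definite amount at each step, that it is bounded below by $\hat\Phi$, and then convert the resulting summability of $\{\norm{A_t\Delta x_t^k}\}$ and $\{\norm{\Delta z^k}\}$ into the stated residual bounds by a best-iterate averaging argument. Note that $C^2$-ness of $f_t$ and compactness of $X_t$ will be used only to make sense of the subproblems and their first-order conditions; global Lipschitzness of $\nabla f_t$ is not needed. The first ingredient is to eliminate the multipliers: combining the closed-form $z$-update~\eqref{eq:z-update} with the dual update~\eqref{eq:lambda-update} gives the identity $\lambda^k = -\theta z^k - \tau_z\,\Delta z^k$ for all $k\ge 1$ (the convention for $\Delta z^0$ in the preliminaries makes it hold at $k=0$ as well), and differencing yields $\Delta\lambda^k = -(\theta+\tau_z)\Delta z^k + \tau_z\,\Delta z^{k-1}$, hence $\norm{\Delta\lambda^k}^2 \le 2(\theta+\tau_z)^2\norm{\Delta z^k}^2 + 2\tau_z^2\norm{\Delta z^{k-1}}^2$. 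It is here that one exploits that $z$ is a free block whose part of the objective ($\tfrac\theta2\norm{z}^2$ with the penalty and proximal terms) is strongly convex: this is the only mechanism by which dual increments are controlled by primal increments, which is exactly what makes $T>2$ possible.

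\emph{Sufficient decrease of $\Phi$ (the crux).} I would split $\mathcal{L}(x^k,z^k,\lambda^k)-\mathcal{L}(x^{k-1},z^{k-1},\lambda^{k-1})$ into the changes caused by the $\lambda$-, $z$-, and $x$-updates. The $\lambda$-step contributes exactly $\tfrac1\rho\norm{\Delta\lambda^k}^2$ (an ascent); the $z$-step contributes at most $-\tfrac{\theta+\rho+2\tau_z}{2}\norm{\Delta z^k}^2$ by strong convexity of the $z$-subproblem with $z^{k-1}$ as competitor. The $x$-step is delicate because the Jacobi subproblems~\eqref{eq:x-update} use the stale blocks $x^{k-1}_{\neq t}$: I would expand each quadratic $\norm{A_tx_t^k + A_{\neq t}x^{k-1}_{\neq t}+z^{k-1}-b}^2$ around the residual $r^{k-1}\coloneqq Ax^{k-1}+z^{k-1}-b$, use the per-block descent $\mathcal{L}(x_t^k;x^{k-1}_{\neq t},z^{k-1},\lambda^{k-1}) + \tfrac{\tau_x}{2}\norm{x_t^k-x_t^{k-1}}_{A_t^\top A_t}^2 \le \mathcal{L}(x_t^{k-1};x^{k-1}_{\neq t},z^{k-1},\lambda^{k-1})$ (valid because the subproblem is warm-started at $x_t^{k-1}$, so the returned point is no worse), and then bound $\norm{A\Delta x^k}^2\le T\sum_t\norm{A_t\Delta x_t^k}^2$; the net $x$-contribution is at most $\big(\tfrac{(T-1)\rho}{2}-\tfrac{\tau_x}{2}\big)\sum_t\norm{A_t\Delta x_t^k}^2$. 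Adding the proximal increments defining $\Phi$, substituting the Step-1 estimate for $\tfrac1\rho\norm{\Delta\lambda^k}^2$, and checking that every cross term and every lagged term ($\norm{\Delta z^{k-1}}^2$, $\norm{A_t\Delta x_t^{k-1}}^2$) is absorbed — this is what pins down the constants in~\eqref{eq:eta_definition}, the factors $\tfrac14$ being the half of the decrease reserved for the proximal part of $\Phi$ — leaves
\[
\Delta\Phi^k \;\le\; -\,\eta_x\sum_{t=1}^T\norm{A_t\Delta x_t^k}^2 \;-\; \eta_z\norm{\Delta z^k}^2 .
\]
I expect this step to be the main obstacle.

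\emph{Lower bound and primal residual.} Using the identity $\lambda^k = -\theta z^k - \tau_z\Delta z^k$ again and completing the square, $\tfrac\theta2\norm{z^k}^2 + (\lambda^k)^\top r^k = \tfrac\theta2\norm{b-Ax^k}^2 - \tfrac\theta2\norm{r^k}^2 - \tau_z(\Delta z^k)^\top r^k$ because $z^k - r^k = b - Ax^k$ (here $r^k\coloneqq Ax^k+z^k-b$); a single Young's inequality on the last term then gives $\mathcal{L}(x^k,z^k,\lambda^k) \ge \hat\Phi + \tfrac\theta2(\pi^k)^2 - \tfrac{\tau_z^2}{2(\rho-\theta)}\norm{\Delta z^k}^2$. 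Since the proximal part of $\Phi$ is nonnegative and the standing conditions $\eta_x,\eta_z>0$ force $\rho$ large enough (in fact $\rho>16\theta+8\tau_z$) to dominate that last term, this yields both $\Phi^k\ge\hat\Phi$ and, using $\Phi^k\le\Phi^1$, the estimate $\tfrac\theta2(\pi^k)^2 \le \Phi^1-\hat\Phi+\tfrac{\tau_z^2}{2(\rho-\theta)}\norm{\Delta z^k}^2$.

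\emph{Telescoping, dual residual, and parameter choice.} Summing the Step-2 inequality over $k\in[K]$ and using $\Phi^{K+1}\ge\hat\Phi$ gives $\min\{\eta_x,\eta_z\}\sum_{k=1}^K\big(\sum_t\norm{A_t\Delta x_t^k}^2+\norm{\Delta z^k}^2\big) \le \Phi^1-\hat\Phi$, so there is $j\in[K]$ with $\sum_t\norm{A_t\Delta x_t^j}^2+\norm{\Delta z^j}^2 \le (\Phi^1-\hat\Phi)/(K\min\{\eta_x,\eta_z\})$; inserting $\norm{\Delta z^j}^2$ into the estimate of the previous paragraph produces the $\pi^j$ bound. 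For $\delta_t^j$, I would write the first-order optimality condition of~\eqref{eq:x-update} at $x_t^j$, substitute $A_tx_t^j + A_{\neq t}x^{j-1}_{\neq t}+z^{j-1}-b = \tfrac1\rho\Delta\lambda^j - A_{\neq t}\Delta x^j_{\neq t} - \Delta z^j$ (from the definitions of $r^{j-1},r^j$ and~\eqref{eq:lambda-update}), observe that the $\Delta\lambda^j$ terms cancel so that $\nabla f_t(x_t^j)+A_t^\top\lambda^j + \tau_x A_t^\top A_t\Delta x_t^j - \rho A_t^\top\!\big(A_{\neq t}\Delta x^j_{\neq t}+\Delta z^j\big) \in -N_{X_t}(x_t^j)$, and conclude by Cauchy–Schwarz over the $T+1$ summands that $\delta_t^j \le \norm{A_t}(\rho+\tau_x)\sqrt{(T+1)\big(\sum_s\norm{A_s\Delta x_s^j}^2+\norm{\Delta z^j}^2\big)}$. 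Finally, plugging in $\theta=\epsilon^{-2}$, $\rho=64\epsilon^{-2}$, $\tau_x=256(T-1)\epsilon^{-2}$, $\tau_z=2\epsilon^{-2}$ gives $\eta_x=32(T-1)\epsilon^{-2}>0$, $\eta_z=\tfrac{7}{32}\epsilon^{-2}>0$ and $(\theta+\tau_z)^2/(\eta_z\rho)=\Theta(1)$, so $\pi^j = \sqrt{2(\Phi^1-\hat\Phi)\epsilon^2\,(1+O(1/K))}$ and $\delta_t^j = O\!\big(\epsilon^{-1}(\Phi^1-\hat\Phi)^{1/2}K^{-1/2}\big)$ up to $\epsilon$-independent constants; with a coupling-feasible $x^0$ (available by~\ref{assume:wellposed}) and $z^0=0$ one has $\Phi^1-\hat\Phi=O(1)$, and then $K=O(\epsilon^{-4})$ makes both $\pi^j$ and $\max_t\delta_t^j$ equal to $O(\epsilon)$, which is the stated conclusion.
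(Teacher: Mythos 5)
Your proposal is correct and its overall architecture is the paper's: the same Lyapunov function $\Phi$, the same key identity $\lambda^k=-\theta z^k-\tau_z\Delta z^k$ and its consequence $\Delta\lambda^k=-(\theta+\tau_z)\Delta z^k+\tau_z\Delta z^{k-1}$, a sufficient-decrease inequality with exactly the constants $\eta_x,\eta_z$ of \eqref{eq:eta_definition} (your per-block accounting of the $x$-step, giving $\tfrac{\rho}{2}\norm{A\Delta x^k}^2-\tfrac{\rho+\tau_x}{2}\sum_t\norm{A_t\Delta x_t^k}^2$, is an equivalent repackaging of the paper's telescoping $\beta_t,\gamma_t$ argument, and your strong-convexity bound for the $z$-step is slightly stronger than the paper's $-\tfrac{\tau_z}{2}\norm{\Delta z^k}^2$; dropping the lagged terms is harmless because you never use them downstream), and the same dual-residual derivation from the subproblem optimality conditions. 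Where you genuinely deviate is the lower bound and the primal residual: the paper proves $\Phi^k\ge\hat\Phi$ by contradiction, telescoping $(\lambda^j)^\top(Ax^j+z^j-b)\ge\tfrac{1}{2\rho}(\norm{\lambda^j}^2-\norm{\lambda^{j-1}}^2)$, and then bounds $\pi^k$ by routing the cross term through $\tfrac1\rho\norm{\Delta\lambda^k}^2$ (which drags in $\norm{\Delta z^{k-1}}^2$); you instead obtain a pointwise bound $\mathcal{L}(x^k,z^k,\lambda^k)\ge\hat\Phi+\tfrac{\theta}{2}(\pi^k)^2-\tfrac{\tau_z^2}{2(\rho-\theta)}\norm{\Delta z^k}^2$ by one application of Young's inequality against $\tfrac{\rho-\theta}{2}\norm{Ax^k+z^k-b}^2$, and absorb the loss into the proximal term $\tfrac{\tau_z}{4}\norm{\Delta z^k}^2$ using $\rho>\theta+2\tau_z$ (implied by $\eta_z>0$). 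This is simpler than the paper's contradiction argument, yields the monotone lower bound directly, and gives a primal-residual constant at least as tight as the stated one. Two small points: to reproduce the theorem's constants verbatim you should select $j$ minimizing the weighted decrease $\eta_x\sum_t\norm{A_t\Delta x_t^k}^2_{}+\eta_z\norm{\Delta z^k}^2$ and then split (your division by $\min\{\eta_x,\eta_z\}$ weakens the $\pi^j$ bound when $\eta_x<\eta_z$, though it does not affect the $O(\epsilon^{-4})$ conclusion); and your explicit remark that $\Phi^1-\hat\Phi=O(1)$ under a coupling-feasible start with $z^0=0$, $\lambda^0=0$ addresses a dependence the paper leaves implicit.
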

\begin{proof}
    See Appendix~\ref{sec:global_convergence_proof}.
\end{proof}

\begin{remark}\label{rem:global_convergence}
    The iteration complexity can be improved to $O(\epsilon^{-2})$, whenever the original problem \eqref{eq:nlp_general} has the same form as problem \eqref{eq:nlp}; see also \cite{jiang2019structured,sun2019two,wang2019global} for related discussions.
    Specifically, we can avoid introducing variables $z$ with penalty weights $O(\epsilon^{-2})$, whenever formulation~\eqref{eq:nlp_general} contains a variable block, say $x_T$, that is constrained only via the linear coupling constraints; i.e., $X_T = \mathbb{R}^{n_T}$, and its coupling matrix satisfies $A_T = \mathrm{I}$ (or more generally, its image $\Image{A_T} \supseteq \Image{[b, A_1, \ldots, A_{T-1}]}$), and its objective satisfies $\nabla^2 f_T (x_T) \preceq M\mathrm{I}$ for all $x \in X$, for some fixed $M > 0$. %
\end{remark}

\subsection{Local convergence}
We now establish that the sequence $\{x^k\}$ generated by Algorithm~\ref{algo:main} converges to an approximate local minimizer of the original problem \eqref{eq:nlp_general}, under some additional assumptions.
In contrast to the previous section, where we showed global convergence to stationary points of the original problem \eqref{eq:nlp_general}, the proof proceeds by showing convergence to local minimizers of the quadratic penalty formulation \eqref{eq:nlp}.
To relate local minimizers of the former to those of the latter, we introduce the (full) Lagrangian functions of problems \eqref{eq:nlp_general} and \eqref{eq:nlp}, $\Lambda$ and $\Lambda_\theta$, respectively.
Under assumption \ref{assume:equality_constraints} stated below, these functions have domains $\mathbb{R}^n \times \mathbb{R}^{r} \times \mathbb{R}^m$ and $\mathbb{R}^n \times \mathbb{R}^{r} \times \mathbb{R}^m \times \mathbb{R}^m$, respectively, where $r \coloneqq \sum_{t=1}^T r_t$, and are given by:
\begin{gather}
    \Lambda(x, \mu, \lambda) = \sum_{t = 1}^T [f_t(x_t) + \mu_t^\top c_t(x_t)] + \lambda^\top [Ax - b] \\
    \Lambda_\theta(x, z, \mu, \lambda) = \Lambda(x, \mu, \lambda) + \frac{\theta}{2} \norm{z}^2 + \lambda^\top z
\end{gather}
where $\mu_t \in \mathbb{R}^{r_t}$ and $\mu$ denotes the vector $(\mu_1, \ldots, \mu_T) \in \mathbb{R}^{r}$.
We make the following additional assumptions.
\begin{enumerate}[label=(A\arabic*),leftmargin=*]\setcounter{enumi}{4}
    \item\label{assume:equality_constraints} $X_t = \{x_t \in \mathbb{R}^{n_t}: c_t(x_t) = 0 \}$ can be algebraically described using only equality constraints, where $c_t :\mathbb{R}^{n_t} \mapsto \mathbb{R}^{r_t}$ 
    is $C^2$ for all $t \in [T]$.
    \item\label{assume:licq} For all $\theta > 0$,\footnote{In what follows, we suppress dependence of $x^*, z^*, \mu^*, \lambda^*$ on $\theta$ for ease of exposition.} there exist $x^* \in X$ and $z^* \in \mathbb{R}^m$, such that $(x^*, z^*)$ is a local solution of problem \eqref{eq:nlp}. Also, the constraint Jacobian $J(x^*)$ of the original problem \eqref{eq:nlp_general} at $x^*$
    has full row rank, where we define:
    \[\small
    J(x) =
    \begin{bmatrix}
        \nabla c(x)^\top \\ A
    \end{bmatrix}, 
    \nabla c(x) = 
    \begin{bmatrix}
        \nabla c_1(x_1) & \ldots & 0 \\
        \vdots          & \ddots & \vdots \\
        0               & \ldots & \nabla c_T(x_T)
    \end{bmatrix}.
    \]
    \item\label{assume:sosc} For all $\theta > 0$, there exist $\mu^* \in \mathbb{R}^r$ and $\lambda^* \in \mathbb{R}^m$,
    such that for the same $x^*, z^*$ in assumption \ref{assume:licq}, we have
    \begin{gather*}
        w_x^\top \nabla^2_{xx} \Lambda_\theta(x^*, z^*, \mu^*, \lambda^*) w_x + \theta w_z^\top w_z > 0,
    \end{gather*}
    for all $(w_x, w_z) \in \mathbb{R}^n \times \mathbb{R}^m$ satisfying $\nabla c(x^*)^\top w_x = 0$, $A w_x + w_z = 0$ and $\norm{w_x} + \norm{w_z} > 0$.
    \item\label{assume:licq:subproblem}
    For all $k \geq 1$, the constraint Jacobian of problem~\eqref{eq:x-update} at its optimal solution, $\nabla c_t(x_t^k)^\top$, has full row rank.
    \item\label{assume:subproblem:regular}
    For all $k \geq 1$, the $x$-update step in line~\ref{algo:main:x-update} computes a local minimizer of problem~\eqref{eq:x-update} that is closest to either $x_t^{k-1}$ if $A_t$ has full column rank or to $x_t^*$ otherwise.
\end{enumerate}
We make some remarks about these assumptions.
Assumption~\ref{assume:equality_constraints} is without loss of generality since inequalities can be reformulated as equality constraints by adding squares of additional slack variables, and it is also used in the convergence analyses of existing augmented Lagrangian methods (e.g., see~\cite{bertsekas2014constrained,sun2019two,harwood2021analysis}).
Assumptions~\ref{assume:licq} and~\ref{assume:sosc} state that the linear independence constraint qualification (LICQ) and second-order sufficient conditions (SOSC) are satisfied, respectively, at the local solution $(x^*, z^*)$ of the quadratic penalty formulation \eqref{eq:nlp} for all (sufficiently large) $\theta > 0$.
Assumption~\ref{assume:licq:subproblem} states that LICQ is also satisfied at local solutions of problem~\eqref{eq:x-update} that correspond to the $x$-updates.
As before, these are fairly standard to establish local convergence of augmented Lagrangian methods.
Finally, Assumption~\ref{assume:subproblem:regular} is satisfied by any well-behaved NLP solver if $A_t$ has full column rank since all variables have an associated proximal term in problem~\eqref{eq:x-update} in this case; when $A_t$ does not have full column rank, the assumption is necessary for convergence and it is implicit in classical analyses of the standard augmented Lagrangian method \cite[see eq.(6) in Proposition~2.4]{bertsekas2014constrained}.
\begin{remark}
    Assumption~\ref{assume:subproblem:regular} can be relaxed if the proximal term in problem~\eqref{eq:x-update} is replaced with $\frac{\tau_x}{2}\|x - x^{k-1}\|^2_{P_t}$, where $P_t \succeq A_t^\top A_t$ has full column rank. It can be shown that local (and global) convergence continues to hold in this case (with minor adjustments to the statement of Theorem~\ref{theorem:local_convergence}) if we relax \ref{assume:subproblem:regular} to require only that problem~\eqref{eq:x-update} computes a local minimizer that is closest to $x_t^{k-1}$. We do not present this generalization for ease of exposition.
\end{remark}

The key idea is to show that the iterates $(x^k, z^k)$ generated by Algorithm~\ref{algo:main}
correspond to local minimizers of a perturbed variant of problem~\eqref{eq:nlp},
that is parameterized by $p \in \mathbb{R}^m$ and $d = (d_1, \ldots, d_T, d_z) \in \mathbb{R}^{n+m}$.
These parameters correspond to the primal and dual residuals of the quadratic penalty formulation~\eqref{eq:nlp} at the iterates of Algorithm~\ref{algo:main}.
Specifically, for any $k \geq 1$, we define these as follows:
\begin{subequations}\label{eq:perturbation}
    \begin{align}
        p^k &\coloneqq Ax^k + z^k - b \label{eq:perturbation:primal} \\
        d_t^k &\coloneqq \rho A_t^\top A_{\neq t} \Delta x_{\neq t}^k - \rho A_t^\top \Delta z^k - \tau_x A_t^\top A_t \Delta x_t^k \label{eq:perturbation:dual_xt} \\
        d_z^k &\coloneqq -\tau_z \Delta z^k \label{eq:perturbation:dual_z}
    \end{align}
\end{subequations}
In the following theorem, recall that $D = \mathop{\mathrm{diag}}(A_1, \ldots, A_T)$ so that $Dx = (A_1 x_1, \ldots, A_T x_T) \in \mathbb{R}^{Tm}$.
\begin{theorem}[Local convergence]\label{theorem:local_convergence}
    Suppose that assumptions \ref{assume:nonconvex_sets}--\ref{assume:subproblem:regular} hold, $\epsilon \in (0, 1)$ is a fixed constant,
    and parameters $\rho, \theta, \tau_x, \tau_z > 0$ are chosen such that $\theta = \epsilon^{-2}$ and \eqref{eq:eta_definition} holds.
    Then, there exist constants $\bar{\rho}, \epsilon' > 0$ such that
    if
    $\rho > \bar{\rho}$ and
    the sequence $\{(x^k, z^k, \lambda^k)\}$ is generated by Algorithm~\ref{algo:main}
    with input $(x^0, z^0, \lambda^0)$ satisfying
    $\norm{(Dx^0, z^0, \lambda^0) - (Dx^*, z^*, \lambda^*)} < \epsilon'$
    and
    $\lambda^0 = -\theta z^0$,
    then the sequence $\{x^k\}$ converges at a sublinear rate to the $\epsilon$-approximate local minimizer $x^*$ of problem~\eqref{eq:nlp_general} satisfying:
    \begin{gather*}
        \max\left\{\pi(x^*), \max_{t \in [T]} \delta_t(x_t^*, \lambda^*)\right\} = O(\epsilon), \\
        w^\top \nabla^2_{xx} \Lambda(x^*, \mu^*, \lambda^*) w > 0, \; \forall w: J(x^*) w = 0, w \neq 0.
    \end{gather*}
\end{theorem}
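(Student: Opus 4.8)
My plan splits the statement into two parts that can be handled separately: (i) the structural claims about the limit point $x^*$, which follow from \ref{assume:equality_constraints}--\ref{assume:sosc} alone; and (ii) the actual convergence of the algorithm's iterates to $x^*$ at a sublinear rate, for which I would use the perturbation quantities \eqref{eq:perturbation} together with a parametric sensitivity result. For (i): since $(x^*,z^*)$ is a local solution of \eqref{eq:nlp} with $\theta=\epsilon^{-2}$ and LICQ holds there (A6 for \eqref{eq:nlp_general} implies LICQ for \eqref{eq:nlp}, since adjoining the identity block in the $z$-gradient preserves full row rank), the KKT system of \eqref{eq:nlp} yields $\mu^*,\lambda^*$ with $\nabla f_t(x_t^*)+\nabla c_t(x_t^*)\mu_t^*+A_t^\top\lambda^*=0$, $Ax^*+z^*=b$, and $\theta z^*+\lambda^*=0$. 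From these I would read off $\pi(x^*)=\norm{Ax^*-b}=\norm{z^*}=\norm{\lambda^*}/\theta=O(\epsilon^2)$, using boundedness of $\lambda^*$ (a consequence of LICQ, compactness of $X$, and smoothness of $f_t,c_t$); and, since $N_{X_t}(x_t^*)=\Image{\nabla c_t(x_t^*)}$ is a subspace under LICQ, $\nabla f_t(x_t^*)+A_t^\top\lambda^*=-\nabla c_t(x_t^*)\mu_t^*\in-N_{X_t}(x_t^*)$, so $\delta_t(x_t^*,\lambda^*)=0$. For the second-order claim I would observe $\nabla^2_{xx}\Lambda_\theta=\nabla^2_{xx}\Lambda$ and, for any $w$ with $J(x^*)w=0$ and $w\ne0$, plug $(w_x,w_z)=(w,-Aw)=(w,0)$ into \ref{assume:sosc} to conclude $w^\top\nabla^2_{xx}\Lambda(x^*,\mu^*,\lambda^*)w>0$. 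This settles (i).

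For (ii), the first step is to certify that every iterate solves a slightly perturbed copy of \eqref{eq:nlp}. Using \ref{assume:licq:subproblem} to obtain multipliers $\mu_t^k$ for the constraints $c_t(x_t)=0$ in \eqref{eq:x-update}, I would write the first-order conditions of \eqref{eq:x-update} at $x_t^k$, substitute the closed forms \eqref{eq:z-update}--\eqref{eq:lambda-update}, and use $\Delta\lambda^k=\rho p^k$ and $\theta z^k+\lambda^k=-\tau_z\Delta z^k$; routine algebra then shows $(x^k,z^k,\mu^k,\lambda^k)$ is a KKT point of \eqref{eq:nlp} modified by adding a linear objective term with coefficient $d^k$ and by replacing $Ax+z=b$ with $Ax+z=b+p^k$, with $(p^k,d^k)$ exactly as in \eqref{eq:perturbation}. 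Because $(x^*,z^*,\mu^*,\lambda^*)$ is a KKT point of this parametric family at $(p,d)=0$ satisfying LICQ (\ref{assume:licq}) and the strong second-order condition (\ref{assume:sosc}), I would invoke Robinson-type strong regularity / classical NLP sensitivity to get $\bar\delta,\bar r>0$ and a Lipschitz map $\sigma$ with $\sigma(0,0)=(Dx^*,z^*,\mu^*,\lambda^*)$ such that, for $\norm{(p,d)}<\bar\delta$, the perturbed problem has a unique KKT point $\sigma(p,d)$ in the ball $B_{\bar r}$ about $(Dx^*,z^*,\mu^*,\lambda^*)$, and it is a strict local minimizer. Assumption \ref{assume:subproblem:regular} (the minimizer of \eqref{eq:x-update} nearest $x_t^{k-1}$, or $x_t^*$ when $A_t$ is column-rank-deficient) is precisely what guarantees that the subproblem solves return the $x$-component of $\sigma(p^k,d^k)$, so $(Dx^k,z^k,\mu^k,\lambda^k)=\sigma(p^k,d^k)$ whenever the iterate stays in $B_{\bar r}$ and $\norm{(p^k,d^k)}<\bar\delta$.

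The second step is to close the loop with the global-convergence machinery. Since \eqref{eq:eta_definition} holds, the monotone decrease and lower-boundedness of $\{\Phi^k\}$ established for Theorem~\ref{theorem:global_convergence} — together with the attendant summability of $\eta_x\norm{D\Delta x^k}^2+\eta_z\norm{\Delta z^k}^2$ — gives $D\Delta x^k\to0$ and $\Delta z^k\to0$; combined with $\Delta\lambda^k=-(\theta+\tau_z)\Delta z^k+\tau_z\Delta z^{k-1}$ and $p^k=\rho^{-1}\Delta\lambda^k$, this yields $(p^k,d^k)\to0$ through \eqref{eq:perturbation}. I would then choose $\bar\rho$ large enough that $\eta_x,\eta_z>0$ and that the (fixed, $\epsilon$-dependent) coefficients relating $(p^k,d^k)$ to $(D\Delta x^k,\Delta z^k,\Delta z^{k-1})$ are small compared to $\bar\delta$, and $\epsilon'$ small enough that the initial point — for which $\Delta z^0=-\tau_z^{-1}(\lambda^0+\theta z^0)=0$ by the hypothesis $\lambda^0=-\theta z^0$ — sits well inside $B_{\bar r}$; an induction on $k$ then keeps every iterate in $B_{\bar r}$ with $\norm{(p^k,d^k)}<\bar\delta$, so $(Dx^k,z^k,\mu^k,\lambda^k)=\sigma(p^k,d^k)\to\sigma(0,0)$ by continuity. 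When $A_t$ has full column rank this gives $x_t^k\to x_t^*$ directly; otherwise the ``closest to $x_t^*$'' rule in \ref{assume:subproblem:regular} upgrades $A_tx_t^k\to A_tx_t^*$ to $x_t^k\to x_t^*$. For the rate, the Lipschitz estimate $\norm{(Dx^k,z^k)-(Dx^*,z^*)}\le L\norm{(p^k,d^k)}=O(\norm{D\Delta x^k}+\norm{\Delta z^k}+\norm{\Delta z^{k-1}})$ together with $\min_{j\le K}(\norm{D\Delta x^j}^2+\norm{\Delta z^j}^2)\le(\Phi^1-\hat\Phi)/(K\min\{\eta_x,\eta_z\})$ from telescoping the descent yields $\min_{j\le K}\norm{(Dx^j,z^j)-(Dx^*,z^*)}=O(1/\sqrt{K})$.

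The routine parts are the KKT bookkeeping for the perturbed problem and the first-/second-order identifications for $x^*$; the sensitivity theorem is standard once LICQ and the strong SOSC are available. I expect the main obstacle to be the forward-invariance induction: one must ensure simultaneously that the warm-started NLP solves stay on the single minimizer branch $\sigma$ and that the perturbations $(p^k,d^k)$ — which, unlike in a pure descent method, are assembled from the \emph{increments} $\Delta x^k,\Delta z^k$ rather than from distances to $x^*$ — never grow large enough to exit $B_{\bar\delta}$. This is exactly what forces the coupled requirement ``$\rho$ large, $\epsilon'$ small, $\lambda^0=-\theta z^0$'' in the hypotheses, and it is where \ref{assume:subproblem:regular} does genuine work when some $A_t$ fails to have full column rank.
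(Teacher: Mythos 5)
Much of your outline parallels the paper's actual proof: the perturbed problem \ref{eq:nlp_perturbed}, the identification of the iterates as KKT points of $S(p^k,d^k)$ via \ref{assume:licq:subproblem} (the paper's Lemma~\ref{lemma:iterates_and_nlp_perturbed}), the Fiacco-type sensitivity map (Lemma~\ref{lemma:nlp_sensitivity}), the convergence $(p^k,d^k)\to 0$ from the Lyapunov descent (Lemma~\ref{lemma:convergence_of_residuals}), and the use of \ref{assume:subproblem:regular} to stay on the right solution branch. Part (i) of your argument is also consistent with how the paper disposes of the two displayed properties of $x^*$ (modulo the uniform-in-$\theta$ boundedness of $\lambda^*$, which deserves a citation but is standard under LICQ and compactness).

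The genuine gap is the forward-invariance induction, which you correctly flag as the main obstacle but then simply assert. Nothing in your argument keeps the iterates inside the sensitivity ball: the global machinery only bounds $\sum_k(\|D\Delta x^k\|^2+\|\Delta z^k\|^2)$ by $(\Phi^1-\hat\Phi)/\min\{\eta_x,\eta_z\}$, and a bounded sum of squares does not bound the cumulative drift $\sum_k\|D\Delta x^k\|$, so the iterates could leave $B_{\bar r}$ after finitely many steps, at which point the identification $(Dx^k,z^k,\mu^k,\lambda^k)=\sigma(p^k,d^k)$ and your rate estimate collapse. A one-step continuity statement (the analogue of Lemma~\ref{lemma:subproblem_sensitivity}) cannot be iterated either, since its modulus may exceed one. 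The paper closes this loop with an ingredient your proposal lacks: the lower and upper bounds of Lemmas~\ref{lemma:lower_bound} and~\ref{lemma:upper_bound}, obtained from local strong convexity of the augmented Lagrangian of $S(0,0)$ and convexity of the $\rho$-regularized primal functional $Q(\cdot,d)+\tfrac{\rho}{4}\|\cdot\|^2$ (both requiring $\rho>\bar\rho$ via \cite[Lemma~3.2.1]{bertsekas1999nonlinear}), which combine into the Fej\'er-type inequality \eqref{eq:local:contraction}. Summing that inequality is what simultaneously confines every iterate to the neighborhood (so the sensitivity identification applies for all $k$) and delivers the Q-sublinear rate. Note also that your stated reason for taking $\rho$ large is off: in \eqref{eq:perturbation:dual_xt} the coefficients $\rho$ and $\tau_x$ multiplying the increments \emph{grow} with $\rho$, so large $\rho$ does not shrink $(p^k,d^k)$; its true role is to make the relevant Hessians positive definite in Lemmas~\ref{lemma:lower_bound}--\ref{lemma:subproblem_sensitivity}.
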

\begin{proof}
    See Appendix~\ref{sec:local_convergence_proof}.
\end{proof}

\subsection{Adaptive parameter tuning}
In practice, the values for the parameters $\rho, \theta, \tau_x, \tau_z$ that are suggested in Theorems~\ref{theorem:global_convergence} and~\ref{theorem:local_convergence} are quite conservative and can be significantly larger than what is required for convergence.
Therefore, Algorithm~\ref{algo:tuning} presents a practical strategy for adaptively tuning these parameter values.

\begin{algorithm}[!htbp]
    \caption{Adaptive distributed proximal Jacobi scheme}
    \label{algo:tuning}
    \begin{algorithmic}[1]
        \renewcommand{\algorithmicrequire}{\textbf{Input:}}
        \renewcommand{\algorithmicensure}{\textbf{Output:}}
        \REQUIRE $\epsilon \in (0, 1)$, $\rho_0, \omega, \kappa_x, \kappa_z, \zeta, \Psi > 0$, $\nu_x, \nu_\rho, \nu_\theta, \chi > 1$
        \STATE Initialize
        $\theta = \epsilon^{-2}$,
        $\rho = \rho_0$, %
        $\tau_x = \kappa_x \rho$,
        $\tau_z = \kappa_z \rho$,
        $\psi = 0$.
        \FOR {$k = 1, 2, \ldots$}
        \STATE Update $x$, $z$, $\lambda$ as per Algorithm~\ref{algo:main}
        \IF {$\Phi^k - \Phi^{k-1} > \zeta | \Phi^k |$}
        \STATE $\tau_x \gets \min\{\nu_x \tau_x, (2T-1) \rho\}$ \label{algo:tuning:taux-update}
        \ENDIF
        \IF {$\max\left\{\|{p^k}\|_\infty, \|{d^k}\|_\infty \right\} \leq \epsilon$ and $\|{Ax^k - b}\|_\infty > \epsilon$}
        \STATE $\theta \gets \nu_\theta \theta$ \label{algo:tuning:theta-update}
        \ENDIF
        \IF {$\|{p^k}\|_\infty > \chi \|{d^k}\|_\infty$ and $\rho < \omega \theta$}
        \STATE
        $\rho \gets \min\{\nu_\rho \rho, \omega \theta\}$,
        $\tau_x \gets \kappa_x \rho$,
        $\tau_z \gets \kappa_z \rho$
        \ELSIF {$\|{d^k}\|_\infty > \chi \|{p^k}\|_\infty$ and $\psi < \Psi$}
        \STATE
        $\rho \gets \rho/\nu_\rho$,
        $\tau_x \gets \kappa_x \rho$,
        $\tau_z \gets \kappa_z \rho$,
        $\psi \gets \psi + 1$
        \ENDIF
        \IIf {$\|{Ax^k - b}\|_\infty \leq \epsilon$} \textbf{stop} \label{algo:tuning:stop-condition}
        \EndIIf
        \ENDFOR
    \end{algorithmic}
\end{algorithm}

The parameter tuning rules are motivated from the proofs of Theorems~\ref{theorem:global_convergence} and~\ref{theorem:local_convergence}.
In particular, the (cheaply computable) difference in Lyapunov function values, $\Phi^k - \Phi^{k-1}$, is guaranteed to be negative, whenever the penalty parameters $\rho, \theta$ and proximal weights $\tau_x, \tau_z$ are large enough to ensure $\eta_x, \eta_z > 0$.
Therefore, Algorithm~\ref{algo:tuning} starts with relatively small values of these parameters and gradually adjusts them.
In particular, $\tau_x$ is increased in line~\ref{algo:tuning:taux-update} by a factor $\nu_x > 1$ whenever $\Delta \Phi^k$ (appropriately scaled by $|\Phi^k|$) is larger than some $\zeta > 0$; however, the increase is limited to $(2T-1)\rho$ based on the definition \eqref{eq:eta_definition} of $\eta_x$.
The penalty parameter $\theta$ is increased by a factor $\nu_\theta > 1$ in line~\ref{algo:tuning:theta-update} whenever the primal and dual residuals with respect to quadratic penalty formulation~\eqref{eq:nlp}, $p^k$ and $d^k$, are smaller than the tolerance $\epsilon$, but the true primal residual $Ax^k - b$ continues to be larger than $\epsilon$.

The parameter $\rho$ is increased or decreased by a factor $\nu_\rho > 1$ to ensure that the magnitudes of $p^k$ and $d^k$ are within a factor $\chi > 1$ of each other, similar to~\cite{boyd2011distributed}.
However, the number of times $\rho$ is allowed to decrease is bounded by some integer $\Psi > 0$, and moreover, it is not allowed to increase after $\rho = \omega \theta$.
Therefore, $\rho$ will not be adjusted an infinite number of times.
The proximal weight $\tau_z$ is fixed at $\kappa_z \rho$, where $\kappa_z$ is selected to ensure that $\eta_z > 0$ whenever $\rho$ is also sufficiently large.
In particular, the proof of Theorem~\ref{theorem:global_convergence} shows that whenever $\rho/\theta = \omega > 32$, then $\kappa_z = 1/32$ suffices.
Finally, since Theorem~\ref{theorem:global_convergence} ensures that the termination condition in line~\ref{algo:tuning:stop-condition} will be met after a finite number of iterations (e.g., when $\eta_x, \eta_z > 0$), the parameter $\theta$ will not be increased an infinite number of times in line~\ref{algo:tuning:theta-update}.
Therefore, Algorithm~\ref{algo:tuning} is guaranteed to converge to an $\epsilon$-approximate stationary point of the original problem~\eqref{eq:nlp_general} after $O(\epsilon^{-4})$ iterations.

\section{Numerical Experiments}

We demonstrate the computational performance of the proximal Jacobi scheme on multi-period AC Optimal Power Flow (ACOPF) problems. %
The ACOPF is used to determine optimal dispatch levels of generators to satisfy electrical loads in power systems \cite{frank2016introduction}.
Its multi-period variant entails the solution of multiple single-period ACOPF problems that are coupled over a long time horizon.
Such extended horizons are essential to model operational constraints such as ramping limits of thermal generators, as well as planning decisions involving energy storage units \cite{maffei2014optimal} or production costing with accurate representations of system voltages \cite{castillo2016unit}.

We consider a $T$-period ACOPF problem, where
the variables $x_t = (p^g_{t}, q^g_{t}, V_t, \vartheta_t)$ in time period~$t$ are the real ($p^g_{t}$) and reactive ($q^g_{t}$) power generation levels and nodal voltage magnitudes ($V_t$) and angles ($\vartheta_t$), respectively.
The constraint set $X_t$ captures AC power balances with respect to the real ($p^d_t$) and reactive loads ($q^d_t$) in time period~$t$,
whereas the objective function $f_t(p^g_{t})$ minimizes the total production cost in that time period.
For simplicity, we ignore constraints on transmission line flow limits.
The single-period ACOPF problems are coupled via inter-temporal constraints that capture the physical ramping limits of generators; that is, generator~$i$ cannot change its real power output between periods $t$ and $t+1$ by more than $r_i \Delta_t$ units, where $\Delta_t$ denotes the length of period~$t$.
Formally, if $G$ and $B$ denote the sets of generators and buses, respectively,
then this problem can be formulated as follows.
\begin{equation*}\label{eq:acopf}
    \begin{aligned}
        \displaystyle\mathop{\text{minimize}}_{x_1, \ldots, x_T} \;\; & \displaystyle \sum_{t \in [T]} f_t(p^g_{t}) \\
        \text{subject to} \;\;& \displaystyle x_t = (p^g_{t}, q^g_{t}, V_t, \vartheta_t) \in [\underline{x}, \overline{x}], \; t \in [T], \\
        & \sum_{j \in G_i} p^g_{jt} - p^d_{it} = c_i^\mathrm{re}(V_t, \vartheta_t), \; i \in B, \; t \in [T], \\
        & \sum_{j \in G_i} q^g_{jt} - q^d_{it} = c_i^\mathrm{im}(V_t, \vartheta_t), \; i \in B, \; t \in [T], \\
        & \displaystyle \left\lvert p^g_{i,t+1} - p^g_{it} \right \rvert \leq r_{i} \Delta_t, \; i \in G, \; t \in [T-1],
    \end{aligned}
\end{equation*}
where $[\underline{x}, \overline{x}]$ represent variable bounds, $G_i$ is the set of generators connected to bus~$i$,
and $c_i^\mathrm{re}$ and $c_i^\mathrm{im}$ are smooth nonconvex functions of $V_t$ and $\vartheta_t$.
We denote by $Y = Y^\mathrm{re} + \sqrt{-1} Y^\mathrm{im}$ the $|B| \times |B|$ admittance matrix (with $Y_{ii} = y_{ii} - \sum_{j \neq i} Y_{ij}$, $y_{ii} = y^\mathrm{re}_{ii} + \sqrt{-1} y^\mathrm{im}_{ii}$,
where $y^\mathrm{re}_{ii}$ and $y^\mathrm{im}_{ii}$ are the shunt conductance and susceptance at bus $i \in B$, respectively).
Using the polar formulation, the functions $c_i^{\mathrm{re},\mathrm{im}}$ can be written, for all buses $i\in B$, as \cite{molzahn2019survey}
\begingroup
\small
\begin{equation*}
    \begin{aligned}
        c_i^\mathrm{re}(V, \vartheta) &= Y^\mathrm{re}_{ii} V_i^2 \\
        &+\sum_{j \in N_i}V_i V_j \big[Y^\mathrm{re}_{ij}\cos(\vartheta_{i} - \vartheta_{j}) + Y^\mathrm{im}_{ij}\sin(\vartheta_{i} - \vartheta_{j}) \big], \\
        c_i^\mathrm{im}(V, \vartheta) &= -Y^\mathrm{im}_{ii} V_i^2 \\
        &+\sum_{j \in N_i}V_i V_j \big[Y^\mathrm{re}_{ij}\sin(\vartheta_{i} - \vartheta_{j})- Y^\mathrm{im}_{ij}\cos(\vartheta_{i} - \vartheta_{j}) \big],
    \end{aligned}
\end{equation*}
\endgroup
where $N_i \subseteq B$ is the set of neighboring buses of $i$.

The multi-period ACOPF can be formulated as an instance of problem~\eqref{eq:nlp_general} by introducing extra variables $s^g_{i,t+1} \in [0, 2r_i\Delta_t]$ and reformulating the inter-temporal ramping limits as equality constraints:
$p^g_{i,t+1} - p^g_{it} + s^g_{i,t+1} =  r_{i} \Delta_t$.
Note that although the objective function is linear or convex quadratic, the nonconvexity of $c^\mathrm{re}$ and $c^\mathrm{im}$ makes the problem nonconvex.

We consider the standard IEEE test cases `118', `1354pegase', and `9241pegase' available from MATPOWER \cite{zimmerman2010matpower}.
Since the original data only provides a single vector of loads, we generate a load profile over multiple periods
as follows.
We first obtain hourly load data from New England ISO~\cite{nyiso} over a typical week (7~days) of operations.
We then use this $T = 24\times 7 = 168$ period profile as a multiplier for the load at each bus, downscaling if required to ensure single-period ACOPF feasibility.
Thus, the load distribution (across buses) in each period remains the same as the original MATPOWER case, and only the total load (across time periods) follows the imposed profile.
We consider fairly stringent ramp limits $r_i \in [0.33\%, 0.50\%] \times p^{\max}_{i}$ (\textit{per minute}, which is the standard unit for this application class), where $p^{\max}_{i}$ is the maximum rated output of generator~$i$ (note that $\Delta_t = 60$~min).
For each test case, the objective function is scaled by $10^{-3}$ (roughly $T^{-1}$) to ensure $\rho = O(1)$.
All runs are initialized with $x^0 = (\underline{x} + \overline{x})/2$, $z^0 = 0$, $\lambda^0 = 0$ and
unless mentioned otherwise, following parameter values are used in Algorithm~\ref{algo:tuning}:
$\rho_0 = 10^{-3}, \kappa_x = 2$ for case 118 and $\rho_0 = 10^{-5}, \kappa_x = 2.5$ otherwise,
and
$\omega = 32$,
$\kappa_z = 1/32$,
$\zeta = 10^{-4}$,
$\nu_x = 2$,
$\nu_\rho = 2$,
$\nu_\theta = 10$,
$\chi = 10$,
$\Psi = 100$.

Our algorithm is implemented in Julia and is available at \url{https://github.com/exanauts/ProxAL.jl}.
It uses the Message Passing Interface (MPI) which allows the use of distributed parallel computing resources.
All NLP subproblems were solved using the JuMP modeling interface~\cite{DunningHuchetteLubin2017} and Ipopt~\cite{wachter2006implementation} (with default options) as the NLP solver.
The computational times reported in Section~\ref{sec:results:scalability} were obtained on the Summit supercomputer at Oak Ridge National Laboratory~\cite{summit}.

\subsection{Role of proximal terms}\label{sec:results:prox_terms}

The proof of Theorem~\ref{theorem:global_convergence} shows that for fixed values of $\theta$ and $\rho$, the Lyapunov $\{\Phi^k\}$ must be monotonically decreasing, whenever the proximal weights $\tau_x$ are sufficiently large.
To verify this empirically, we fix $\theta = 10^6$, $\rho = 1$, $\tau_z = \kappa_z \rho$ and then consider $\tau_x \in \{0, 1, 2\}$; we also disable their automatic updates in Algorithm~\ref{algo:tuning}.
Figure~\ref{figure:lyapunov} shows the values of the corresponding Lyapunov sequences $\{\big(\Phi^k - \underline{\Phi}\big)/{\underline{\Phi}}\}$, where $\underline{\Phi}$ is a normalizing constant equal to the smallest observed value of $\Phi^k$ (across 100 iterations).
We find that when the proximal terms are absent ($\tau_x = 0$) or when they are present but not sufficiently large ($\tau_x = 1$), the Lyapunov sequence (and hence also $\{(x^k, z^k)\}$) diverges.
This is contrast to classical Gauss-Seidel based algorithms~\cite{sun2019two,jiang2019structured}, which are not as critically dependent on proximal terms.
Indeed, the Jacobi nature of the $x$-update in Algorithm~\ref{algo:main} requires that the new iterate $x^k$ be sufficiently close to its old value $x^{k-1}$ for convergence.

\begin{figure}[!t]
    \centering
    \includegraphics[width=\linewidth]{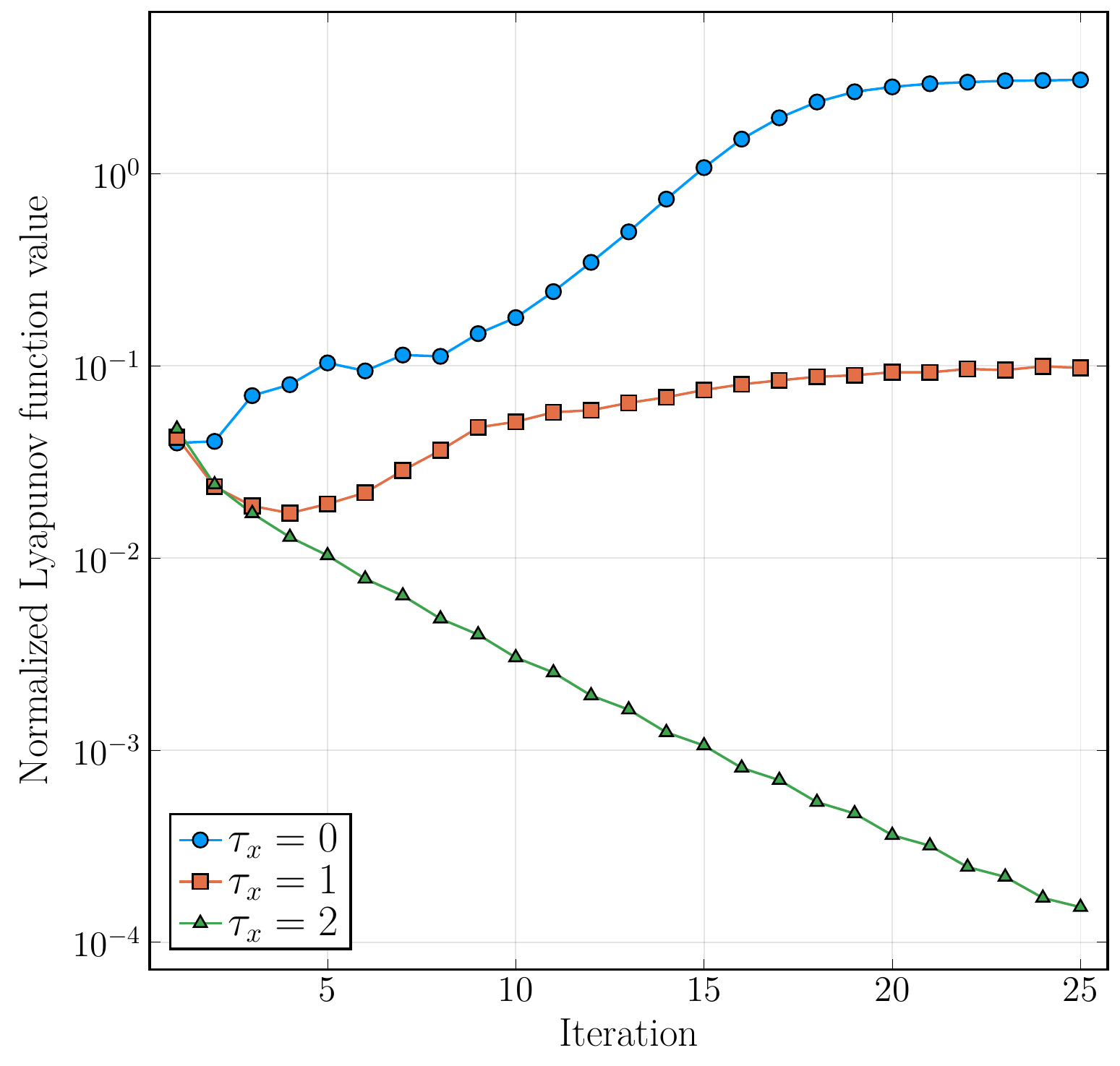}
    \caption{Lyapunov sequence for increasing values of the proximal weight $\tau_x$ and for fixed values of $\theta$, $\rho$ and $\tau_z$.}
    \label{figure:lyapunov}
\end{figure}

\subsection{Comparison with existing method}\label{sec:results:comparison}

We compare the performance of our proposed scheme with the two-level ADMM algorithm proposed in \cite{sun2019two}.
Since the latter uses Gauss-Seidel or sequential updating rules, which are not amenable to distributed parallelization, the two algorithms are not directly comparable.
Therefore, to solve the multi-period ACOPF with the two-level algorithm, we reformulate it using the variable splitting technique described in \cite[Section~1.2]{sun2019two}.
Specifically, for each $t \in [T]$, we introduce a so-called global copy $\bar{p}^g_t$ of $p^g_t$,
and for each $t \in [T-1]$, we introduce a so-called local copy $\hat{p}^{g}_{t+1}$ (local to block~$t+1$) of $p^g_{t}$.
The ramping limit for period $t \in [T-1]$ is then expressed as:
$p^g_{i,t+1} - \hat{p}^g_{i,t+1} + s^g_{i,t+1} =  r_{i} \Delta_t$,
which is added as an additional constraint to $X_{t+1}$.
Finally, the coupling constraints linking the various blocks can be expressed as:
$p^g_{t} = \bar{p}^g_t$ for $t \in [T]$,
and
$\hat{p}^g_{t+1} = \bar{p}^g_t$ for $t \in [T-1]$.
For brevity, we consider only the 118-bus case with a ramp limit of $0.33\%$.

As suggested in \cite[Section~6]{sun2019two}, our implementation of the two-level ADMM uses $\omega = 0.75$, $\gamma = 1.5$, $[\underline{\lambda}, \bar{\lambda}] = [-10^6, 10^6]$, $\beta^1 = 1000$, $\rho^k = c_1 \beta^k$, and the $k^\mathrm{th}$ level inner-level loop is terminated based on \cite[eq.14c]{sun2019two} with $\epsilon_3^k = \sqrt{2n_g(T-1)}/(c_2 k \rho^k)$, where $n_g$ is the number of generators and all other symbols refer to the notation in \cite{sun2019two}.
After trying $(c_1, c_2) \in \{2 \times 10^f : f = -6, -4, -2, 0, 2\} \times \{1, 1000\}$, we set
$c_1 = 2\times 10^{-4}$ and $c_2 = 1000$, since it produced the smallest infinity norm of the combined primal and dual residual vector \cite[eq.14a-14c]{sun2019two} after 50 cumulative iterations.
We set the tolerance $\epsilon = 10^{-8}$ in Algorithm~\ref{algo:tuning}.

Figure~\ref{figure:comparison} shows the performance of our proximal Jacobi scheme and the two-level algorithm.
All residuals correspond to infinity norms; in our scheme, the plotted primal and dual residuals are $\|Ax^k - b\|_{\infty}$ and $\|d^k\|_{\infty}$; in the two-level scheme, they are $\max\{\|p^{g,k} - \bar{p}^{g,k}\|_{\infty}, \|\hat{p}^{g,k} - \bar{p}^{g,k}\|_{\infty}\}$ and the infinity norm of \cite[eq.14a-14b]{sun2019two} (plotted as a function of the number of cumulative iterations), respectively.
We observe that for the first few iterations (up to $25$) and for small target tolerances (up to $10^{-3}$), the two algorithms have similar behavior in terms of reducing the optimality errors.
However, our proposed scheme is able to decrease the residuals at an almost linear rate even for tighter tolerances, whereas the decrease for the two-level scheme seems to become worse than linear. %
This is likely because Algorithm~\ref{algo:tuning} allows $\rho$ to decrease; indeed, we observe in Figure~\ref{figure:comparison} that the primal and dual residuals are roughly within a factor of $\chi = 10$ of each other at all iterations.
Finally, note that the run times of both schemes are expected to be similar, since the per-iteration cost in either scheme is dominated by the solution of (single-period) ACOPF NLP problems.
The actual run time is a function of the number of parallel processes available which we study in the next section.

\begin{figure}[!t]
    \centering
    \includegraphics[width=\linewidth]{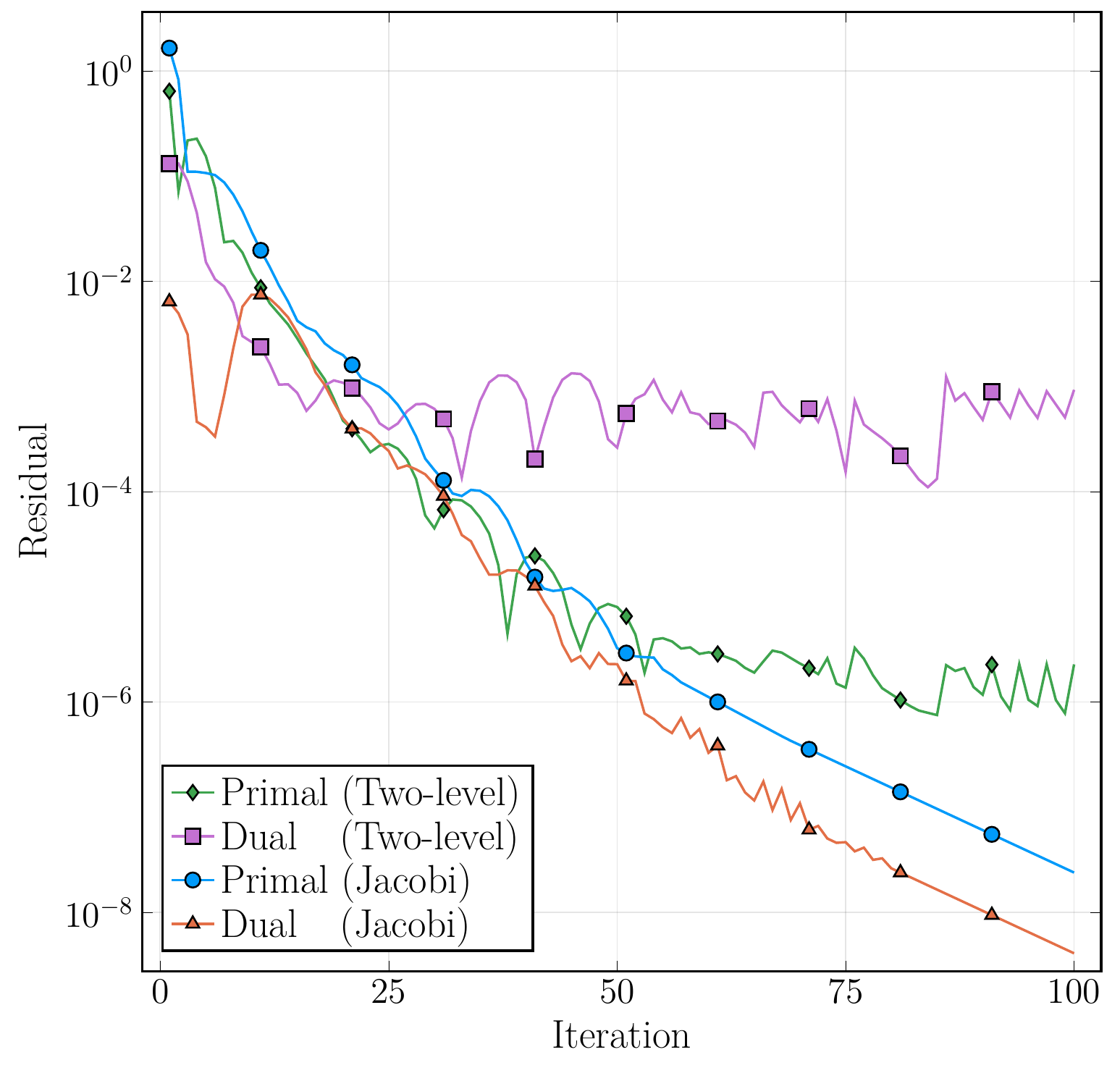}
    \caption{Computational comparison of the proposed proximal Jacobi scheme with the two-level algorithm proposed in \cite{sun2019two}.}
    \label{figure:comparison}
\end{figure}

\subsection{Scalability}\label{sec:results:scalability}

One of the main features of the proposed algorithm is its ability to make use of distributed parallel computing resources.
Therefore, we study its scalability with respect to both the problem size (for a fixed number of parallel processes) as well as the number of processes (for a fixed problem size).
Table~\ref{table:summary} summarizes the computational performance for the different test cases
using $T = 168$ parallel processes and a target tolerance of $\epsilon = 10^{-3}$ in each case.
We observe from Table~\ref{table:summary} that the number of iterations and run times increase with increasing network size and with decreasing values of the ramping limit (which makes the linear coupling constraints harder to satisfy).
Nevertheless, the total run time remains less than 100~minutes for the most difficult test case consisting of more than 3~million variables and constraints.
Also, Figure~\ref{figure:convergence} (which is plotted for the ramping limit of $0.33\%$)  shows that although the initial primal residuals are quite large, they are reduced by more than 4~orders of magnitude over the course of roughly 50 iterations.
Finally, Figure~\ref{figure:scaling} shows the empirically observed run times as a function of the number of parallel processes (MPI ranks) for the 118-bus test case with $r = 0.33\%$.
We find that our implementation exhibits a near-linear scaling in this particular instance.

\begin{table}[!thbp]
    \caption{Summary of computational performance using $T = 168$ parallel processes for convergence to $\epsilon = 10^{-3}$ tolerance.}
    \label{table:summary}
    \centering
    \begin{tabularx}{\columnwidth}{lrrccr}
        \toprule
        Case        & \# Vars.  & \# Cons.  & Ramp \% & \# Iters. & Time (s) \\
        \midrule
        \multirow{2}{*}{118}        & \multirow{2}{*}{66,810}     & \multirow{2}{*}{48,666}    &  0.33 &   24 &      5.2 \\ 
        &                             &                            &  0.50 &   13 &      3.9 \\ \midrule 
        \multirow{2}{*}{1354pegase} & \multirow{2}{*}{1,181,779}  & \multirow{2}{*}{696,259}   &  0.33 &   60 &    137.3 \\ 
        &                             &                            &  0.50 &   53 &    119.7 \\ \midrule 
        \multirow{2}{*}{9241pegase} & \multirow{2}{*}{3,235,756}  & \multirow{2}{*}{3,148,396} &  0.33 &   67 &  4,755.5 \\ 
        &                             &                            &  0.50 &   59 &  3,511.9 \\
        \bottomrule
    \end{tabularx}
\end{table}

\begin{figure}[!thbp]
    \centering
    \includegraphics[width=\linewidth]{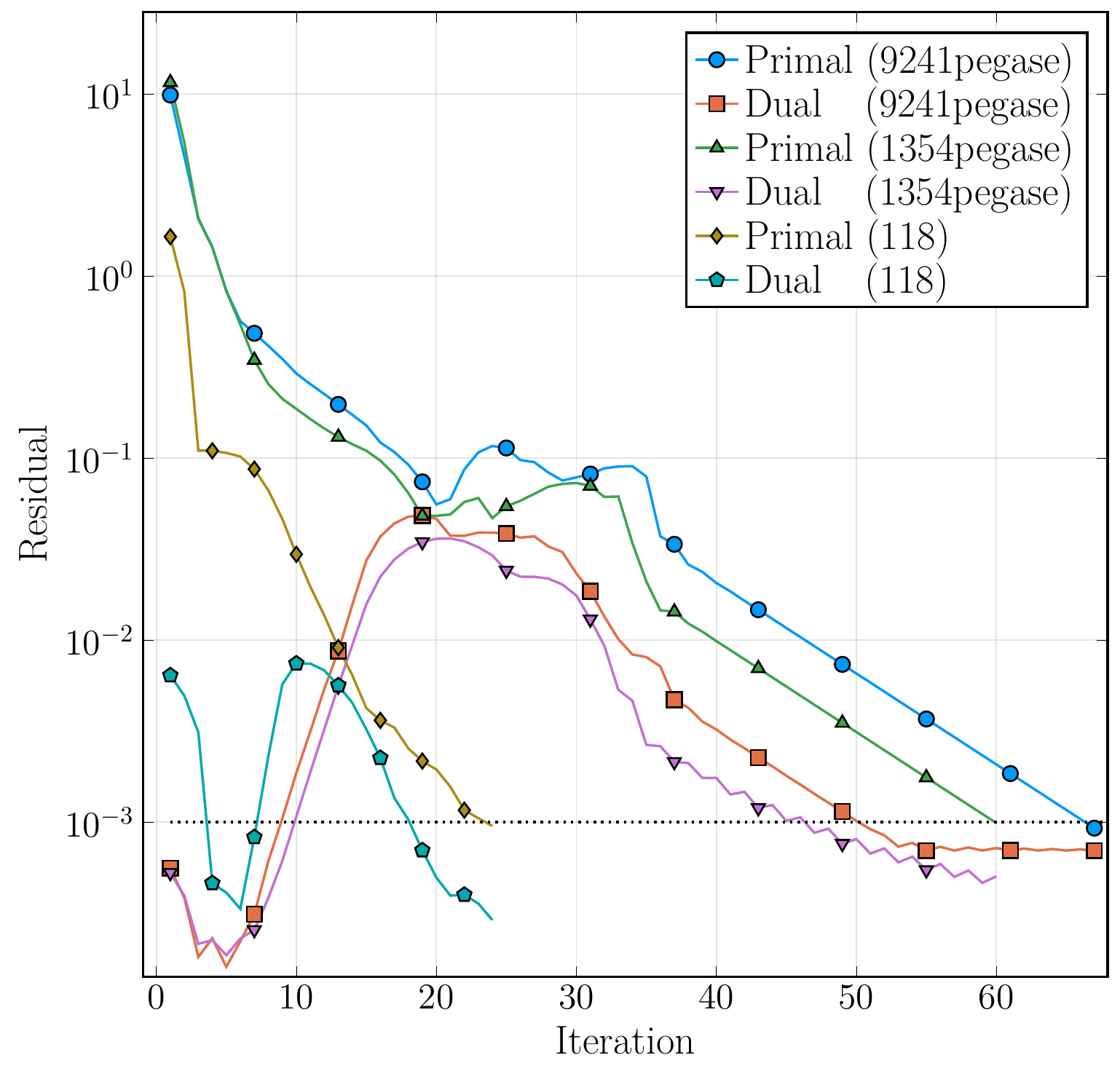}
    \caption{Convergence of the proposed scheme to a tolerance $\epsilon = 10^{-3}$.}
    \label{figure:convergence}
\end{figure}

\begin{figure}[!thbp]
    \centering
    \includegraphics[width=0.9\linewidth]{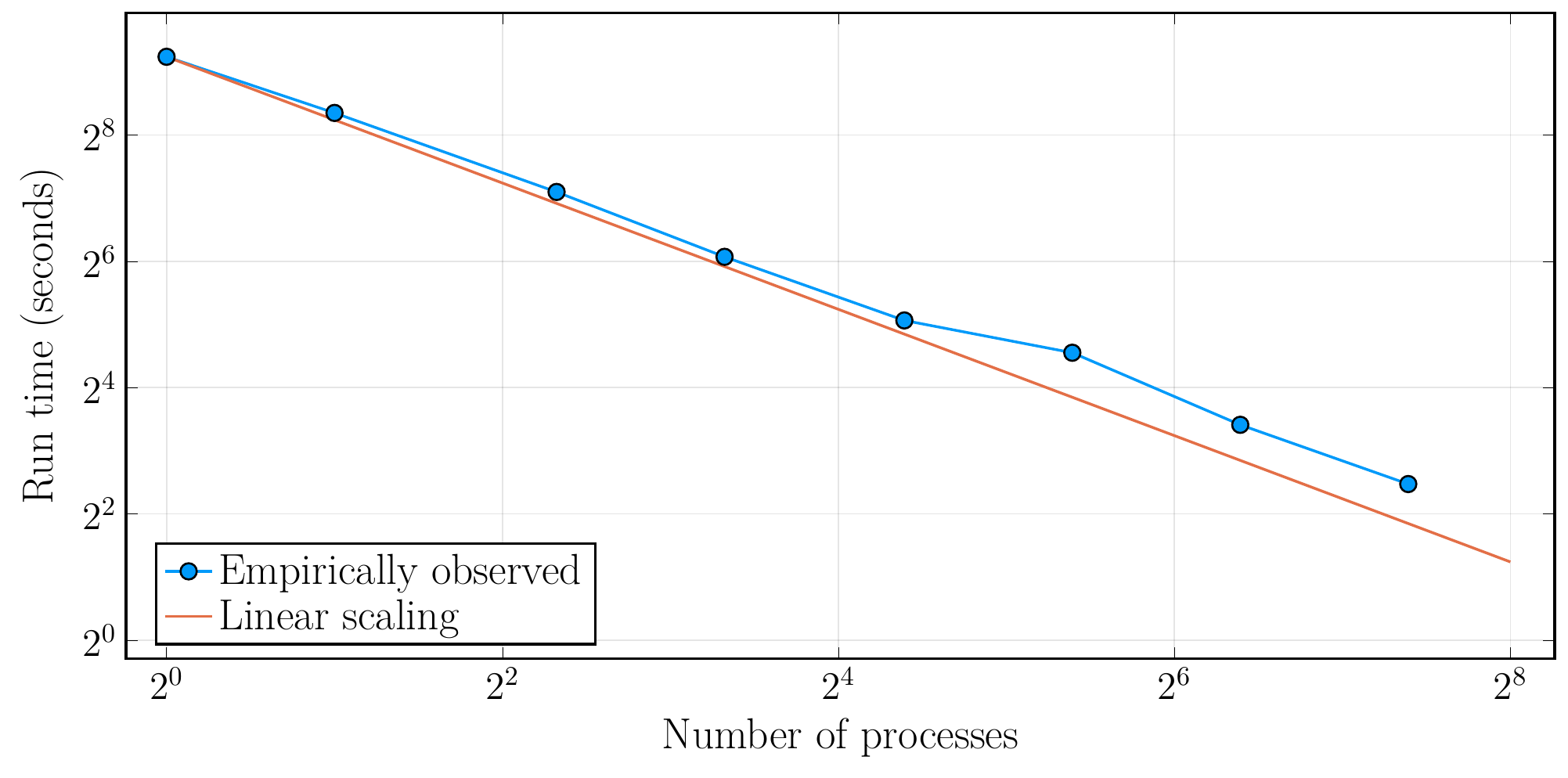}
    \caption{Scalability as a function of the number of parallel processes.}
    \label{figure:scaling}
\end{figure}

\section{Conclusions}

This paper proposed a distributed parallel decomposition algorithm for solving linearly coupled block-structured nonconvex constrained optimization problems.
Such structures naturally arise in several applications, including nonlinear model predictive control and stochastic optimization.
The algorithm performs Jacobi updates of the augmented Lagrangian function with appropriately chosen proximal terms, requiring only local solutions of the individual block NLP problems.
By constructing an easily computable Lyapunov function, we showed that the algorithm converges globally to $\epsilon$-approximate stationary points in $O(\epsilon^{-4})$ iterations as well as locally to $\epsilon$-approximate local minimizers of the original NLP at a sublinear rate.
We also provided a simple, practical and theoretically convergent variant of the algorithm where its parameters are adaptively tuned during the iterations.
Our numerical experiments show that it can outperform existing algorithms based on ADMM and that optimizing large-scale nonconvex AC optimal power flow problems over more than 100~time steps can be done in the order of roughly one~hour.
Future work includes the study of acceleration techniques such as inexact NLP solutions and momentum methods, integration with the two-level algorithm of~\cite{sun2019two}, as well as the incorporation of second-order information such as in~\cite{houska2016augmented}.

\appendices

\section{Global convergence proof}\label{sec:global_convergence_proof}
The proof of Theorem~\ref{theorem:global_convergence} requires several intermediate results.
Throughout, we suppose that the conditions in Theorem~\ref{theorem:global_convergence} hold; that is, assumptions \ref{assume:nonconvex_sets}--\ref{assume:wellposed} are satisfied and $\rho, \theta, \eta_x, \eta_z > 0$ are chosen such that \eqref{eq:eta_definition} is satisfied.

First, Lemma \ref{lemma:lyapunov_descent} establishes that the Lyapunov sequence $\{\Phi^k\}$ defined in \eqref{eq:lyapunov_sequence_definition} decreases by positive multiples of $\norm{\Delta x^k_t}^2_{A_t^\top A_t}$ and $\norm{\Delta z^k}^2$.
Second, Lemma \ref{lemma:lyapunov_lower_bound} shows that the sequence is bounded below by $\hat{\Phi}$ defined in \eqref{eq:lyapunov_lower_bound}.
Lemma \ref{lemma:delta_x_delta_z_bounds} uses these results to bound $\norm{\Delta x^k_t}^2_{A_t^\top A_t}$ and $\norm{\Delta z^k}^2$ as a function of the iteration index $k$.
Finally, Propositions \ref{prop:primal_residual_bound} and \ref{prop:dual_residual_bound} establish bounds on the primal and dual residuals, respectively.
The proof of Theorem~\ref{theorem:global_convergence} follows directly from these.
We use the following intermediate result to simplify exposition.
\begin{lemma}\label{lemma:lambda_z_relation}
    For all $k \geq 0$, we have:
    \begin{equation}\label{eq:lambda_z_relation}
        \lambda^k = -\theta z^k - \tau_z \Delta z^k.
    \end{equation}
\end{lemma}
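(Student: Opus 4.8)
The plan is to verify the identity directly from the two update formulas, handling the base case $k = 0$ separately. For $k = 0$ there is nothing to prove beyond unwinding a definition: recall that $\Delta z^0 \coloneqq -\tau_z^{-1}(\lambda^0 + \theta z^0)$, so $\tau_z \Delta z^0 = -(\lambda^0 + \theta z^0)$, and therefore $-\theta z^0 - \tau_z \Delta z^0 = -\theta z^0 + (\lambda^0 + \theta z^0) = \lambda^0$, which is exactly \eqref{eq:lambda_z_relation} at $k = 0$. (This is in fact the reason that particular initialization of $\Delta z^0$ was chosen.)

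For $k \geq 1$ I would argue by a short algebraic manipulation, with no induction needed. Multiplying the $z$-update \eqref{eq:z-update} through by $\tau_z + \rho + \theta$ gives
\[
(\tau_z + \rho + \theta)\, z^k = \tau_z z^{k-1} - \rho\big(Ax^k - b\big) - \lambda^{k-1}.
\]
Collecting the $z^k$ terms on the left, this is equivalent to
\[
\theta z^k + \tau_z \big(z^k - z^{k-1}\big) = -\rho\big(Ax^k + z^k - b\big) - \lambda^{k-1},
\]
i.e. $\theta z^k + \tau_z \Delta z^k = -\rho(Ax^k + z^k - b) - \lambda^{k-1}$. Now I would substitute the $\lambda$-update \eqref{eq:lambda-update} in the form $\rho(Ax^k + z^k - b) = \lambda^k - \lambda^{k-1}$, which yields $\theta z^k + \tau_z \Delta z^k = -(\lambda^k - \lambda^{k-1}) - \lambda^{k-1} = -\lambda^k$, i.e. $\lambda^k = -\theta z^k - \tau_z \Delta z^k$, as claimed.

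There is no substantive obstacle here; the statement is an immediate consequence of the fact that the closed-form $z$-update is precisely the unconstrained minimizer of $\mathcal{L}(x^k, z, \lambda^{k-1}) + \tfrac{\tau_z}{2}\|z - z^{k-1}\|^2$ in $z$, combined with the dual update \eqref{eq:lambda-update}. The only point requiring any care is the consistency of the $k = 0$ convention with the $k \geq 1$ recursion, which is handled by the definition of $\Delta z^0$ above.
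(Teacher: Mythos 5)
Your proof is correct and follows essentially the same route as the paper: the base case $k=0$ is just the definition of $\Delta z^0$, and for $k \geq 1$ you substitute the $\lambda$-update \eqref{eq:lambda-update} into the rearranged $z$-update \eqref{eq:z-update}, which is exactly the paper's argument, merely written out with the algebra made explicit.
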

\begin{proof}
    For $k = 0$, equation \eqref{eq:lambda_z_relation} is equivalent to the definition of $\Delta z^0$.
    For $k \geq 1$, the $\lambda$-update formula \eqref{eq:lambda-update} implies that
    $
    \lambda^{k-1} = \lambda^{k} - \rho \big[\sum_{t = 1}^T A_t x_t^{k} + z^{k} - b\big].
    $
    Substituting this in the $z$-update formula \eqref{eq:z-update} yields equation \eqref{eq:lambda_z_relation}.
\end{proof}
\begin{lemma}\label{lemma:lyapunov_descent}
    For all $k \geq 1$, we have:
    \begin{equation}\label{eq:delta_phi}
        \begin{aligned}
            \Delta \Phi^{k} \leq
            &-\eta_x \sum_{t = 1}^T \left(\norm{\Delta x_t^k}^2_{A_t^\top A_t} +  \norm{\Delta x_t^{k-1}}^2_{A_t^\top A_t}\right) \\
            &-\eta_z \left(\norm{\Delta z^k}^2 + \norm{\Delta z^{k-1}}^2\right).
        \end{aligned}
    \end{equation}
\end{lemma}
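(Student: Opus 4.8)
The plan is to track how $\Delta\Phi^k$ changes through the three sub-updates of iteration $k$ — the parallel $x$-update \eqref{eq:x-update}, the closed-form $z$-update \eqref{eq:z-update}, and the dual $\lambda$-update \eqref{eq:lambda-update} — together with the change in the proximal terms of \eqref{eq:lyapunov_function}. Using the definition \eqref{eq:lyapunov_sequence_definition}, I would first write $\Delta\Phi^k$ as the sum of the Lagrangian change $\mathcal{L}(x^k,z^k,\lambda^k) - \mathcal{L}(x^{k-1},z^{k-1},\lambda^{k-1})$, the proximal change in $z$, namely $\tfrac{\tau_z}{4}\big(\norm{\Delta z^k}^2 - \norm{\Delta z^{k-1}}^2\big)$, and the proximal change in $x$, namely $\tfrac{\tau_x}{4}\sum_{t=1}^T\big(\norm{\Delta x_t^k}^2_{A_t^\top A_t} - \norm{\Delta x_t^{k-1}}^2_{A_t^\top A_t}\big)$. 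I would then split the Lagrangian change by inserting the intermediate values $\mathcal{L}(x^k,z^{k-1},\lambda^{k-1})$ and $\mathcal{L}(x^k,z^k,\lambda^{k-1})$, obtaining an $x$-step term, a $z$-step term, and a $\lambda$-step term to be bounded separately.

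For the $x$-step, since $x_t^k$ is produced by a descent local minimization of \eqref{eq:x-update} warm-started at the feasible point $x_t^{k-1}$, its subproblem objective value does not exceed that at $x_t^{k-1}$; writing $r \coloneqq Ax^{k-1}+z^{k-1}-b$ and $\delta_t \coloneqq A_t\Delta x_t^k$ (so $\norm{\delta_t}^2 = \norm{\Delta x_t^k}^2_{A_t^\top A_t}$), this reads $f_t(x_t^k) - f_t(x_t^{k-1}) + (\lambda^{k-1})^\top\delta_t + \tfrac{\rho}{2}\norm{r+\delta_t}^2 - \tfrac{\rho}{2}\norm{r}^2 + \tfrac{\tau_x}{2}\norm{\delta_t}^2 \le 0$ for each $t$. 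The key algebraic step is to sum these $T$ inequalities and reconcile them with $\mathcal{L}(x^k,z^{k-1},\lambda^{k-1}) - \mathcal{L}(x^{k-1},z^{k-1},\lambda^{k-1})$, whose quadratic term $\tfrac{\rho}{2}\norm{r + \sum_t\delta_t}^2$ is \emph{not} block-separable: expanding $\norm{r+\delta_t}^2$ and $\norm{r+\sum_t\delta_t}^2$, the coefficients of $\norm{r}^2$ sum to zero and the cross terms $r^\top\!\sum_t\delta_t$ cancel, leaving $\mathcal{L}(x^k,z^{k-1},\lambda^{k-1}) - \mathcal{L}(x^{k-1},z^{k-1},\lambda^{k-1}) \le \tfrac{\rho}{2}\norm{\sum_t\delta_t}^2 - \tfrac{\rho}{2}\sum_t\norm{\delta_t}^2 - \tfrac{\tau_x}{2}\sum_t\norm{\delta_t}^2$. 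Using $\norm{\sum_t\delta_t}^2 \le T\sum_t\norm{\delta_t}^2$ then bounds the $x$-step by $-\big(\tfrac{\tau_x}{2} - \tfrac{(T-1)\rho}{2}\big)\sum_t\norm{\Delta x_t^k}^2_{A_t^\top A_t}$.

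For the $z$-step, \eqref{eq:z-update} is exactly the minimizer in $z$ of $\mathcal{L}(x^k,\cdot,\lambda^{k-1}) + \tfrac{\tau_z}{2}\norm{\cdot-z^{k-1}}^2$, which is strongly convex with modulus $\theta+\rho+\tau_z$; evaluating the strong-convexity inequality at $z^{k-1}$ gives $\mathcal{L}(x^k,z^k,\lambda^{k-1}) - \mathcal{L}(x^k,z^{k-1},\lambda^{k-1}) \le -\tfrac{\theta+\rho+2\tau_z}{2}\norm{\Delta z^k}^2$. For the $\lambda$-step, \eqref{eq:lambda-update} gives directly $\mathcal{L}(x^k,z^k,\lambda^k) - \mathcal{L}(x^k,z^k,\lambda^{k-1}) = (\Delta\lambda^k)^\top(Ax^k+z^k-b) = \tfrac{1}{\rho}\norm{\Delta\lambda^k}^2$, and Lemma~\ref{lemma:lambda_z_relation} yields $\Delta\lambda^k = -(\theta+\tau_z)\Delta z^k + \tau_z\Delta z^{k-1}$ for all $k \ge 1$ (at $k=1$ using the convention $\Delta z^0 \coloneqq -\tau_z^{-1}(\lambda^0+\theta z^0)$), whence $\norm{\Delta\lambda^k}^2 \le 2(\theta+\tau_z)^2\big(\norm{\Delta z^k}^2 + \norm{\Delta z^{k-1}}^2\big)$ using $\tau_z \le \theta+\tau_z$.

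Finally I would add the three Lagrangian-step bounds and the two proximal changes and collect coefficients. On $\norm{\Delta x_t^k}^2_{A_t^\top A_t}$ the coefficient is $-\big(\tfrac{\tau_x}{2}-\tfrac{(T-1)\rho}{2}\big) + \tfrac{\tau_x}{4} = -\eta_x$, and on $\norm{\Delta x_t^{k-1}}^2_{A_t^\top A_t}$ it is $-\tfrac{\tau_x}{4} \le -\eta_x$ since $\eta_x = \tfrac{\tau_x}{4}-\tfrac{(T-1)\rho}{2} \le \tfrac{\tau_x}{4}$; on $\norm{\Delta z^k}^2$ the coefficient is $-\tfrac{\theta+\rho+2\tau_z}{2} + \tfrac{2(\theta+\tau_z)^2}{\rho} + \tfrac{\tau_z}{4} \le -\tfrac{\tau_z}{4} + \tfrac{2(\theta+\tau_z)^2}{\rho} = -\eta_z$ (because $\theta+\rho+2\tau_z \ge \tau_z$), and on $\norm{\Delta z^{k-1}}^2$ it is exactly $\tfrac{2(\theta+\tau_z)^2}{\rho} - \tfrac{\tau_z}{4} = -\eta_z$, with $\eta_x,\eta_z$ as in \eqref{eq:eta_definition}. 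This gives \eqref{eq:delta_phi}. I expect the $x$-step to be the crux: correctly decoupling the non-separable quadratic penalty of $\mathcal{L}$ from the per-block subproblem inequalities, and checking that precisely the $\tfrac{(T-1)\rho}{2}$ slack survives while every $\norm{r}^2$ and cross term vanishes, is the only delicate computation; the $z$- and $\lambda$-steps are routine strong-convexity and substitution arguments, and the base case $k=1$ follows from the stated conventions $\Delta x^0 = 0$ and $\Delta z^0 = -\tau_z^{-1}(\lambda^0+\theta z^0)$.
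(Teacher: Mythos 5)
Your proof is correct and follows essentially the same route as the paper's: split $\Delta\Phi^k$ into the $x$-, $z$-, and $\lambda$-step contributions, exploit the descent property of the warm-started block subproblems, use $\Delta\lambda^k = -(\theta+\tau_z)\Delta z^k + \tau_z\Delta z^{k-1}$ from Lemma~\ref{lemma:lambda_z_relation}, and collect coefficients against the proximal terms exactly as you do. The only cosmetic differences are that the paper handles the non-separable quadratic via a Gauss--Seidel-style telescoping sum with pairwise Cauchy--Schwarz on the cross terms $\gamma_t$ (algebraically equivalent to your $\bigl\lVert\sum_t\delta_t\bigr\rVert^2 \le T\sum_t\lVert\delta_t\rVert^2$ reconciliation, yielding the same $\tfrac{(T-1)\rho}{2}$ slack), and it uses plain descent of the $z$-subproblem (giving $-\tfrac{\tau_z}{2}\lVert\Delta z^k\rVert^2$) where you invoke strong convexity for a slightly sharper constant that is not needed.
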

\begin{proof}
    Using the definitions \eqref{eq:lyapunov_function} and \eqref{eq:lyapunov_sequence_definition}, we have:
    \begin{equation}\label{eq:lemma_lyapunov_descent_delta_phi}
        \begin{aligned}
    \Delta \Phi^k &= \mathcal{L}(x^k, z^k, \lambda^k) - \mathcal{L}(x^{k-1}, z^{k-1}, \lambda^{k-1}) \\
    &\phantom{=}+ \sum_{t=1}^T \frac{\tau_x}{4} \left[\norm{\Delta x_t^k}_{A_t^\top A_t}^2 - \norm{\Delta x_t^{k-1}}_{A_t^\top A_t}^2 \right] \\
    &\phantom{=}+ \frac{\tau_z}{4} \left[\norm{\Delta z^k}^2 - \norm{\Delta z^{k-1}}^2 \right].
        \end{aligned}
    \end{equation}
The first term in the above expression can be written as:
    \begin{subequations}\label{eq:lemma_lyapunov_descent_ab}
        \begin{align}
        &\mathcal{L}(x^k, z^k, \lambda^k) - \mathcal{L}(x^{k-1}, z^{k-1}, \lambda^{k-1}) = (a) + (b)\\
        &(a) =
        \big[\mathcal{L}(x^k, z^k, \lambda^k) - \mathcal{L}(x^k, z^k, \lambda^{k-1})\big] \\
        &(b) =
        \big[\mathcal{L}(x^k, z^k, \lambda^{k-1}) - \mathcal{L}(x^{k-1}, z^{k-1}, \lambda^{k-1})\big]
        \end{align}
    \end{subequations}
First, we examine $(a)$. From the definitions of \eqref{eq:auglag_nlp} and \eqref{eq:lambda-update}, it follows that
$
(a) = \frac{1}{\rho}\norm{\Delta \lambda^k}^2.
$
Now, replacing $k$ with $k-1$ in \eqref{eq:lambda_z_relation}, we get
$
\lambda^{k-1} = -\theta z^{k-1} - \tau_z \Delta z^{k-1}
$
and subtracting this equation from \eqref{eq:lambda_z_relation}, we obtain:
\begin{equation}\label{eq:delta_lambda}
\Delta \lambda^k =
-(\theta + \tau_z) \Delta z^k + \tau_z \Delta z^{k-1}.
\end{equation}
Combining these and using the triangle inequality, we obtain:
\begin{equation}\label{eq:lemma_lyapunov_descent_a}
\begin{aligned}
    (a) &= \frac{1}{\rho}\norm{\Delta \lambda^k}^2 \\
    &\leq \frac{1}{\rho} \left((\theta + \tau_z) \norm{\Delta z^k} + \tau_z \norm{\Delta z^{k-1}} \right)^2 \\
    &\leq \frac{(\theta + \tau_z)^2}{\rho} \left(\norm{\Delta z^k} + \norm{\Delta z^{k-1}} \right)^2 \\
    &\leq \frac{2(\theta + \tau_z)^2}{\rho}  \left(\norm{\Delta z^k}^2 + \norm{\Delta z^{k-1}}^2\right), 
\end{aligned}
\end{equation}
where the second and third inequalities follow from $\theta > 0$ and $(a_1 + a_2)^2 \leq 2(a_1^2 + a_2^2)$ for arbitrary real $a_1, a_2$, respectively.

We now examine $(b)$. For arbitrary $t \in [T]$ and $j, k, l \geq 0$, let $(x_{<t}^j, x_t^k, x_{>t}^l)$ denote the vector obtained by stacking all $x_s^j$ for $s < t$, $x_s^k$ for $s = t$, and $x_s^l$ for $s > t$. The term $(b)$ can now be expressed as the following telescoping sum.
Note that since $\lambda$ is fixed at $\lambda^{k-1}$, we temporarily define $L(x, z) \coloneqq \mathcal{L}(x, z, \lambda^{k-1})$ to simplify exposition.
\begin{align}
    (b) &= \big[L(x^k, z^k) - L(x^k, z^{k-1})\big] \notag \\
    &\hspace{-1em}+\sum_{t=1}^T \big[ L(x_{<t}^k, x_t^k, x_{>t}^{k-1}, z^{k-1})  - L(x_{<t}^k, x_t^{k-1}, x_{>t}^{k-1}, z^{k-1}) \big] \notag \\
    &= \alpha + \sum_{t=1}^T (\beta_{t} + \gamma_t) \label{eq:lemma_lyapunov_descent_b}
\end{align}
where the expressions for $\beta_t$ and $\gamma_t$ can be verified by expanding out both sides of each summand:
\begin{align*}
    \alpha &= L(x^k, z^k) - L(x^k, z^{k-1}) \\
    & \leq -\frac{\tau_z}{2} \norm{\Delta z^k}^2 \\
    \beta_t &= L(x_{<t}^{k-1}, x_t^k, x_{>t}^{k-1}, z^{k-1}) - L(x_{<t}^{k-1}, x_t^{k-1}, x_{>t}^{k-1}, z^{k-1}) \\ %
    &\leq -\frac{\tau_x}{2} \norm{\Delta x^k_t}^2_{A_t^\top A_t} \\
    \gamma_t &= \left[L(x_{<t}^{k}, x_t^k, x_{>t}^{k-1}, z^{k-1}) - L(x_{<t}^{k-1}, x_t^k, x_{>t}^{k-1}, z^{k-1}) \right] \\
    &\phantom{=} -\left[L(x_{<t}^{k}, x_t^{k-1}, x_{>t}^{k-1}, z^{k-1}) -L(x_{<t}^{k-1}, x_t^{k-1}, x_{>t}^{k-1}, z^{k-1})\right] \\
    &= \rho \sum_{s=1}^{t-1} (A_t \Delta x_t^k)^\top (A_s \Delta x_s^k) \\
    &\leq \frac{\rho}{2} \sum_{s=1}^{t-1} \left(\norm{A_t \Delta x_t^k}^2 + \norm{A_s \Delta x_s^k}^2 \right) \\
    \sum_{t=1}^T \gamma_t &\leq \frac{\rho(T-1)}{2} \sum_{t=1}^T \norm{\Delta x_t^k}^2_{A_t^\top A_t} \\
    &\leq \frac{\rho(T-1)}{2} \sum_{t=1}^T \left(\norm{\Delta x_t^k}^2_{A_t^\top A_t} + \norm{\Delta x_t^{k-1}}^2_{A_t^\top A_t}\right).
\end{align*}
The $\alpha$-inequality %
is derived as follows:
substituting \eqref{eq:lambda_z_relation} in the $\lambda$-update formula \eqref{eq:lambda-update} yields
\[
\lambda^{k-1} + \rho [A x^{k} + z^{k} - b] + \theta z^k + \tau_z \Delta z^k = 0,
\]
which is equivalent to $\nabla F(z^k) = 0$, where $F : z \mapsto \mathcal{L}(x^k, z, \lambda^{k-1}) + \frac{\tau_z}{2} \norm{z - z^{k-1}}^2$ is a convex quadratic function;
that is, $z^k$ minimizes $F$ and hence, $F(z^k) \leq F(z^{k-1})$ and this is seen to be equivalent to the $\alpha$-inequality.
The $\beta_t$-inequality is obtained by exploiting the fact $x_t^k$ is locally optimal with a better objective value than $x_t^{k-1}$ in problem~\eqref{eq:x-update}.
Finally, the $\gamma_t$-inequality follows from $a_1 a_2 \leq \frac12 (a_1^2 + a_2^2)$ for arbitrary real $a_1, a_2$.
The inequality \eqref{eq:delta_phi} is then obtained by combining \eqref{eq:lemma_lyapunov_descent_delta_phi}, \eqref{eq:lemma_lyapunov_descent_ab}, \eqref{eq:lemma_lyapunov_descent_a} and \eqref{eq:lemma_lyapunov_descent_b}.
\end{proof}

\begin{lemma}\label{lemma:lyapunov_lower_bound}
For all $k \geq 1$, we have ($\hat{\Phi}$ defined in \eqref{eq:lyapunov_lower_bound}):
    \[
    \Phi^{k-1} \geq \Phi^{k} \geq \hat{\Phi}.
    \]
\end{lemma}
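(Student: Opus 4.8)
The statement is two inequalities and I would treat them separately.

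\emph{Monotone decrease} ($\Phi^{k-1}\ge\Phi^k$). This is immediate from Lemma~\ref{lemma:lyapunov_descent}: the right-hand side of \eqref{eq:delta_phi} is a nonpositive linear combination of the quantities $\norm{\Delta x_t^k}^2_{A_t^\top A_t}$, $\norm{\Delta x_t^{k-1}}^2_{A_t^\top A_t}$, $\norm{\Delta z^k}^2$ and $\norm{\Delta z^{k-1}}^2$, each of which is nonnegative, because $\eta_x,\eta_z>0$ by \eqref{eq:eta_definition}. Hence $\Delta\Phi^k\le 0$ for every $k\ge 1$; this also covers $k=1$, where $\Delta x^0=0$ and $\Phi^0$ is the prescribed initial value. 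That is the left inequality.

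\emph{Lower bound} ($\Phi^k\ge\hat{\Phi}$). Since the algorithm keeps $x^k\in X$, it suffices to prove the stronger bound $\Phi^k\ge\sum_{t=1}^T f_t(x_t^k)$. First discard the nonnegative $x$-proximal terms, so that $\Phi^k\ge\mathcal{L}(x^k,z^k,\lambda^k)+\frac{\tau_z}{4}\norm{\Delta z^k}^2$. Then, inside $\mathcal{L}(x^k,z^k,\lambda^k)$, eliminate the multiplier via the identity $\lambda^k=-\theta z^k-\tau_z\Delta z^k$ of Lemma~\ref{lemma:lambda_z_relation} and complete the square in $z^k$; writing $r^k\coloneqq Ax^k+z^k-b$ and using $z^k-r^k=b-Ax^k$, this produces the exact identity
\[
\mathcal{L}(x^k,z^k,\lambda^k)=\sum_{t=1}^T f_t(x_t^k)+\frac{\theta}{2}\norm{Ax^k-b}^2+\frac{\rho-\theta}{2}\norm{r^k}^2-\tau_z\,(\Delta z^k)^\top r^k .
\]
A further completion of the square in $r^k$ — legitimate because $\rho>\theta$, which follows from $\eta_z>0$ — bounds the last two terms below by $-\frac{\tau_z^2}{2(\rho-\theta)}\norm{\Delta z^k}^2$. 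Adding back $\frac{\tau_z}{4}\norm{\Delta z^k}^2$ yields $\Phi^k\ge\sum_{t}f_t(x_t^k)+\frac{\theta}{2}\norm{Ax^k-b}^2+\big(\tfrac{\tau_z}{4}-\tfrac{\tau_z^2}{2(\rho-\theta)}\big)\norm{\Delta z^k}^2$. Finally, $\eta_z>0$ forces $\rho>8(\theta+\tau_z)^2/\tau_z=8\theta^2/\tau_z+16\theta+8\tau_z\ge\theta+2\tau_z$, hence $\rho-\theta\ge 2\tau_z$ and the coefficient $\tfrac{\tau_z}{4}-\tfrac{\tau_z^2}{2(\rho-\theta)}$ is nonnegative; all remaining terms are nonnegative too, so $\Phi^k\ge\sum_{t}f_t(x_t^k)\ge\hat{\Phi}$.

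The one point that takes care is the indefinite cross term $-\tau_z(\Delta z^k)^\top r^k$ that surfaces once $\lambda^k$ is substituted out: it is not controlled by $\mathcal{L}$ alone but only jointly by the quadratic-penalty gap $\rho-\theta$ inside $\mathcal{L}$ and by the proximal term $\tfrac{\tau_z}{4}\norm{\Delta z^k}^2$ carried in $\Phi$, and the reason this closes is precisely that the standing hypothesis $\eta_z>0$ makes $\rho$ large relative to $\theta$ and $\tau_z$. Everything else is routine algebra.
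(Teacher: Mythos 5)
Your proposal is correct, and while the monotonicity half is identical to the paper's (both just invoke Lemma~\ref{lemma:lyapunov_descent} with $\eta_x,\eta_z>0$), your proof of the lower bound $\Phi^k\ge\hat{\Phi}$ takes a genuinely different route. The paper argues by contradiction: assuming $\Phi^l<\hat{\Phi}$ for some $l$, monotonicity forces $\sum_{j=1}^k(\Phi^j-\hat{\Phi})\to-\infty$, while keeping $\lambda^\top(Ax+z-b)=\frac{1}{\rho}(\lambda^j)^\top\Delta\lambda^j$ and telescoping $\norm{\lambda^j}^2$ gives the uniform bound $\sum_{j=1}^k(\Phi^j-\hat{\Phi})\ge-\frac{1}{2\rho}\norm{\lambda^0}^2$, a contradiction; this needs only nonnegativity of the remaining terms and never compares $\rho$ with $\theta$. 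You instead eliminate $\lambda^k$ via Lemma~\ref{lemma:lambda_z_relation}, rewrite $\mathcal{L}(x^k,z^k,\lambda^k)$ exactly as $\sum_t f_t(x_t^k)+\frac{\theta}{2}\norm{Ax^k-b}^2+\frac{\rho-\theta}{2}\norm{r^k}^2-\tau_z(\Delta z^k)^\top r^k$ (the same identity the paper only derives later, for the term $(c)$ in Proposition~\ref{prop:primal_residual_bound}), complete the square in $r^k$, and absorb the loss $\frac{\tau_z^2}{2(\rho-\theta)}\norm{\Delta z^k}^2$ into the $z$-proximal term; the needed inequality $\rho-\theta>2\tau_z$ does follow from $\eta_z>0$ exactly as you compute ($\rho>8(\theta+\tau_z)^2/\tau_z\ge 16\theta+8\tau_z$), paralleling the paper's own extraction of $\rho>32\theta$ inside Proposition~\ref{prop:primal_residual_bound}. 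What your version buys is a stronger, pointwise conclusion, $\Phi^k\ge\sum_t f_t(x_t^k)+\frac{\theta}{2}\pi(x^k)^2+\big(\frac{\tau_z}{4}-\frac{\tau_z^2}{2(\rho-\theta)}\big)\norm{\Delta z^k}^2$, which already contains most of the work of Proposition~\ref{prop:primal_residual_bound} and makes no use of $\lambda^0$; what the paper's contradiction argument buys is independence from the identity and from the $\rho$ versus $\theta$ comparison at this stage of the analysis. Both are valid under the standing hypothesis \eqref{eq:eta_definition}.
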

\begin{proof}
    Since $\eta_x, \eta_z > 0$, Lemma~\ref{lemma:lyapunov_descent} implies that $\Phi^{k} - \Phi^{k-1} \leq 0$ for all $k \geq 1$.
    Now suppose, for the sake of contradiction, that there exists $l \geq 1$ such that $\Phi^l < \hat{\Phi}$.
    Therefore, we must have
    \begin{equation}\label{eq:lemma_lyapunov_lower_bound_temp}
    \Phi^k \leq \Phi^l < \hat{\Phi}, \; \forall k \geq l.
    \end{equation}
    Consider the augmented Lagrangian \eqref{eq:auglag_nlp} and Lyapunov functions \eqref{eq:lyapunov_function}.
    Here, all terms except $\sum_{t=1}^T f_t(x_t)$ and $\lambda^\top (Ax + z - b)$ are non-negative.
    Therefore, using \eqref{eq:lyapunov_lower_bound} and the $\lambda$-update formula \eqref{eq:lambda-update}, we have for all $j \geq 1$:
    \begin{align*}
    \Phi^j
    &\geq \hat{\Phi} + (\lambda^j)^\top \left(Ax^j + z^j - b \right) \\
    &\geq \hat{\Phi} + \frac{1}{\rho}(\lambda^j)^\top (\lambda^j - \lambda^{j-1}) \\ 
    &\geq \hat{\Phi} + \frac{1}{2\rho}\left(\norm{\lambda^j}^2 - \norm{\lambda^{j-1}}^2\right),
    \end{align*}
    where the inequality is obtained by noting that
    \[
    2 a_1^\top a_2 = \norm{a_1}^2 + \norm{a_2}^2 - \norm{a_1 - a_2}^2 \geq \norm{a_1}^2 - \norm{a_1 - a_2}^2,
    \]
    for arbitrary vectors $a_1, a_2$ and by setting $a_1 = \lambda^j$ and $a_2 = \lambda^j - \lambda^{j-1}$.
    Re-arranging the above inequality and summing over $j \in [k]$, we obtain:
    \[
    \sum_{j=1}^k \big(\Phi^j - \hat{\Phi}\big) \geq -\frac{1}{2\rho} \norm{\lambda^0}^2, \; \forall k \geq 1.
    \]
    However, \eqref{eq:lemma_lyapunov_lower_bound_temp} implies:
    \[
    \sum_{j=1}^k \big(\Phi^j - \hat{\Phi}\big) \leq \sum_{j=1}^l \big(\Phi^j - \hat{\Phi}\big) + (k - l) \big(\Phi^l - \hat{\Phi}\big), \; \forall k \geq l.
    \]
    Since $\Phi^l - \hat{\Phi} < 0$ (by hypothesis), the right-hand side of the above inequality can be made arbitrarily smaller than 0 for sufficiently large $k$, contradicting the previous inequality that $\sum_{j=1}^k \big(\Phi^j - \hat{\Phi}\big) \geq -(2\rho)^{-1} \norm{\lambda^0}^2$ for all $k \geq 1$.
\end{proof}

\begin{lemma}\label{lemma:delta_x_delta_z_bounds}
    For all $K \geq 1$, there exists $j \in [K]$ such that
    \begin{gather}
        \norm{\Delta z^j}^2 + \norm{\Delta z^{j-1}}^2 \leq \left(\Phi^1 - \Phi^K\right)\left(K\eta_z\right)^{-1}, \label{eq:delta_z_bound} \\
        \sum_{t=1}^T \norm{\Delta x_t^j}_{A_t^\top A_t}^2 \leq \left(\Phi^1 - \Phi^K\right)\left(K\eta_x\right)^{-1}. \label{eq:delta_x_bound} 
    \end{gather}
\end{lemma}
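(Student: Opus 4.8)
The plan is to obtain Lemma~\ref{lemma:delta_x_delta_z_bounds} as a direct corollary of the two preceding lemmas via a telescoping‑plus‑pigeonhole argument; no delicate estimate is involved, only a little bookkeeping. The starting point is the descent inequality~\eqref{eq:delta_phi} of Lemma~\ref{lemma:lyapunov_descent}. Discarding from its right‑hand side the non‑negative block $\eta_x\sum_{t}\norm{\Delta x_t^{k-1}}^2_{A_t^\top A_t}$ (and recalling $\Delta x^0 = 0$, so that the $k=1$ instance is still valid), I would set, for each $k \ge 1$,
\[
g_k \;\coloneqq\; \eta_x\sum_{t=1}^T\norm{\Delta x_t^k}^2_{A_t^\top A_t} + \eta_z\bigl(\norm{\Delta z^k}^2 + \norm{\Delta z^{k-1}}^2\bigr) \;\ge\; 0,
\]
so that $\Phi^{k-1} - \Phi^k \ge g_k$. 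The reason for bundling the two groups of terms into the single quantity $g_k$ is that $g_k$ simultaneously dominates both quantities the lemma asks us to bound, i.e.\ $g_k \ge \eta_x\sum_{t}\norm{\Delta x_t^k}^2_{A_t^\top A_t}$ and $g_k \ge \eta_z\bigl(\norm{\Delta z^k}^2+\norm{\Delta z^{k-1}}^2\bigr)$.

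Next I would sum the inequality $\Phi^{k-1}-\Phi^k \ge g_k$ over the iteration indices so that the left‑hand side telescopes to $\Phi^1 - \Phi^K$, which is non‑negative by the monotonicity $\Phi^{k-1} \ge \Phi^k$ established in Lemma~\ref{lemma:lyapunov_lower_bound}. This gives $\Phi^1 - \Phi^K \ge \sum_k g_k$, i.e.\ a non‑negative number bounded below by a sum of $K$ non‑negative terms. (If $\Phi^1 = \Phi^K$, then every $g_k$ vanishes and both asserted inequalities hold trivially for any $j \in [K]$; so assume $\Phi^1 > \Phi^K$.)

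Finally, by the pigeonhole principle there exists an index $j \in [K]$ with $g_j \le (\Phi^1-\Phi^K)/K$. For this single $j$, the two domination bounds recorded above immediately yield
\begin{gather*}
\sum_{t=1}^T\norm{\Delta x_t^j}^2_{A_t^\top A_t} \;\le\; \frac{g_j}{\eta_x} \;\le\; \frac{\Phi^1-\Phi^K}{K\eta_x}, \\
\norm{\Delta z^j}^2 + \norm{\Delta z^{j-1}}^2 \;\le\; \frac{g_j}{\eta_z} \;\le\; \frac{\Phi^1-\Phi^K}{K\eta_z},
\end{gather*}
which are precisely~\eqref{eq:delta_x_bound} and~\eqref{eq:delta_z_bound}.

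I do not anticipate a genuine obstacle here: the statement is an essentially mechanical consequence of Lemmas~\ref{lemma:lyapunov_descent} and~\ref{lemma:lyapunov_lower_bound}. The only points that need a moment's care are (i) ensuring that a \emph{single} index $j$ controls both the $x$‑ and the $z$‑increments, which is handled by packaging them into $g_k$ before invoking pigeonhole, and (ii) choosing the range of summation so that the telescoped left‑hand side comes out exactly as $\Phi^1 - \Phi^K$.
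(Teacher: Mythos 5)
Your argument is essentially the paper's own proof: telescope the descent inequality of Lemma~\ref{lemma:lyapunov_descent} and select the single index $j$ with the smallest per-iteration decrease (your pigeonhole step), which simultaneously controls the $z$-bound and, after discarding the nonnegative $\norm{\Delta x_t^{j-1}}^2_{A_t^\top A_t}$ contribution, the $x$-bound. The only wrinkle is a bookkeeping off-by-one that the paper shares: summing from $k=2$ to $K$ telescopes to $\Phi^1-\Phi^K$ but leaves only $K-1$ terms (giving $(\Phi^1-\Phi^K)/(K-1)$), whereas summing from $k=1$ gives $K$ terms but telescopes to $\Phi^0-\Phi^K$; this discrepancy is harmless for the way the lemma is used downstream.
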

\begin{proof}
    Let $j$ be the index that maximizes the right-hand side of \eqref{eq:delta_phi} in Lemma \ref{lemma:lyapunov_descent} over $k \in [K]$. The first inequality follows by summing \eqref{eq:delta_phi} over $k \in [K]$ and noting that $\eta_x, \eta_z > 0$. The second one follows similarly if we ignore the contribution of $ \|\Delta x_t^{j-1} \|_{A_t^\top A_t} \geq 0$.
\end{proof}

\begin{proposition}\label{prop:primal_residual_bound}
    For all $K \geq 1$, there exists $j \in [K]$ such that the primal residual $\pi^j \coloneqq \pi(x^j)$, defined in \eqref{eq:residual_primal}, satisfies
    \begin{equation*}
        \pi^j \leq \sqrt{\frac{2\big(\Phi^1 - \hat{\Phi}\big)}{\theta} \left(1 + \frac{2(\theta + \tau_z)^2}{K\eta_z\rho} \right)}
    \end{equation*}
\end{proposition}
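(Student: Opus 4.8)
The plan is to express the original primal residual $\pi^j=\|Ax^j-b\|$ in terms of two quantities the analysis already controls: the slack iterate $z^j$, whose size is kept in check by the penalty term $\tfrac{\theta}{2}\|z\|^2$ sitting inside $\Phi$, and the residual $p^j \coloneqq Ax^j+z^j-b$ of the penalty formulation~\eqref{eq:nlp}. The $\lambda$-update~\eqref{eq:lambda-update} gives $p^j=\tfrac{1}{\rho}\Delta\lambda^j$, hence $Ax^j-b=p^j-z^j$ and $\pi^j=\|p^j-z^j\|$. I would take $j\in[K]$ to be the index produced by Lemma~\ref{lemma:delta_x_delta_z_bounds}, so that $\|\Delta z^j\|^2+\|\Delta z^{j-1}\|^2\le(\Phi^1-\Phi^K)/(K\eta_z)\le(\Phi^1-\hat\Phi)/(K\eta_z)$, the last step using $\Phi^K\ge\hat\Phi$ from Lemma~\ref{lemma:lyapunov_lower_bound}; this is also the index that will later serve the dual-residual bound.

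Next I would extract a lower bound on $\Phi^j$. Writing $\Phi^j=\mathcal{L}(x^j,z^j,\lambda^j)+\tfrac{\tau_z}{4}\|\Delta z^j\|^2+\sum_{t}\tfrac{\tau_x}{4}\|\Delta x_t^j\|^2_{A_t^\top A_t}$ together with $\mathcal{L}(x^j,z^j,\lambda^j)=\sum_t f_t(x_t^j)+\tfrac{\theta}{2}\|z^j\|^2+(\lambda^j)^\top p^j+\tfrac{\rho}{2}\|p^j\|^2$, then discarding the nonnegative proximal terms, using $\sum_t f_t(x_t^j)\ge\hat\Phi$ because $x^j\in X$, and invoking $\Phi^1\ge\Phi^j$ (Lemma~\ref{lemma:lyapunov_lower_bound}), I obtain
\[
\Phi^1-\hat\Phi \;\ge\; \tfrac{\theta}{2}\|z^j\|^2+(\lambda^j)^\top p^j+\tfrac{\rho}{2}\|p^j\|^2 .
\]
The sign-indefinite cross term $(\lambda^j)^\top p^j$ is the crux, and the key step is to cancel it rather than bound it bluntly: substituting the identity $\lambda^j=-\theta z^j-\tau_z\Delta z^j$ of Lemma~\ref{lemma:lambda_z_relation} turns the right-hand side into $\tfrac{\theta}{2}\|z^j\|^2-\theta(z^j)^\top p^j+\tfrac{\rho}{2}\|p^j\|^2-\tau_z(\Delta z^j)^\top p^j$.

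To close the argument I would complete the square against $\|p^j\|^2$: since $\tfrac{\theta}{2}\|p^j-z^j\|^2=\tfrac{\theta}{2}\|z^j\|^2-\theta(z^j)^\top p^j+\tfrac{\theta}{2}\|p^j\|^2$, adding $\tfrac{\theta}{2}\|p^j\|^2$ to both sides of the previous inequality and rearranging yields
\[
\tfrac{\theta}{2}(\pi^j)^2 \;\le\; (\Phi^1-\hat\Phi)+\big(\tfrac{\theta}{2}-\tfrac{\rho}{2}\big)\|p^j\|^2+\tau_z(\Delta z^j)^\top p^j .
\]
The standing hypothesis $\eta_z>0$ forces $\rho>2\theta$, so the coefficient of $\|p^j\|^2$ is negative, and a single Young's inequality absorbs $\tau_z(\Delta z^j)^\top p^j$ into it, leaving a remainder $\le\tfrac{\tau_z^2}{\rho}\|\Delta z^j\|^2$. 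Bounding $\|\Delta z^j\|^2\le\|\Delta z^j\|^2+\|\Delta z^{j-1}\|^2\le(\Phi^1-\hat\Phi)/(K\eta_z)$ for the chosen $j$, dividing through by $\tfrac{\theta}{2}$, and using the crude estimate $\tau_z^2\le2(\theta+\tau_z)^2$ to match the stated constant gives $(\pi^j)^2\le\tfrac{2(\Phi^1-\hat\Phi)}{\theta}\big(1+\tfrac{2(\theta+\tau_z)^2}{K\eta_z\rho}\big)$, and taking square roots finishes the proof. I expect essentially all of the difficulty to sit in the treatment of $(\lambda^j)^\top p^j$: eliminating it through Lemma~\ref{lemma:lambda_z_relation} is exactly what keeps $\tfrac{2(\Phi^1-\hat\Phi)}{\theta}$ as the leading term, which is the asymptotic ($K\to\infty$) primal residual one expects once $p^j\to0$ and the infeasibility of~\eqref{eq:nlp_general} is governed entirely by the slack penalty.
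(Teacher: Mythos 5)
Your proposal is correct and follows essentially the same route as the paper: bound $\Phi^1 - \hat{\Phi}$ below by the coupling terms of $\mathcal{L}(x^j,z^j,\lambda^j)$, eliminate $\lambda^j$ via Lemma~\ref{lemma:lambda_z_relation}, complete the square to expose $\tfrac{\theta}{2}(\pi^j)^2$, and invoke the $\Delta z$-bound of Lemma~\ref{lemma:delta_x_delta_z_bounds}. The only (harmless) deviation is the treatment of the cross term $\tau_z(\Delta z^j)^\top p^j$: the paper rewrites $p^j = \rho^{-1}\Delta\lambda^j$ and reuses the $\|\Delta\lambda^j\|$ estimate from Lemma~\ref{lemma:lyapunov_descent}, whereas you absorb it into $\tfrac{\rho-\theta}{2}\|p^j\|^2$ by Young's inequality (valid, since $\eta_z>0$ indeed forces $\rho > 32\theta \ge 2\theta$), reaching the same bound after relaxing $\tau_z^2 \le 2(\theta+\tau_z)^2$.
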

\begin{proof}
    First, we note that
    \begin{equation}\label{eq:prop_primal_residual_bound_temp}
    \Phi^1 \geq \Phi^k \geq \mathcal{L}(x^k, z^k, \lambda^k) \geq \hat{\Phi} + (c),
    \end{equation}
    where $(c)$ is equal to
    \begin{align*}
        \frac{\theta}{2} \norm{z^k}^2 + (\lambda^k)^\top \left[Ax^k + z^k - b\right] + \frac{\rho}{2} \norm{Ax^k + z^k - b}^2.
    \end{align*}
    Note that the first inequality in \eqref{eq:prop_primal_residual_bound_temp} follows from Lemma \ref{lemma:lyapunov_lower_bound};
    the second follows from the definition \eqref{eq:lyapunov_sequence_definition} of $\Phi^k$;
    and the third follows from the definitions \eqref{eq:auglag_nlp} and \eqref{eq:lyapunov_lower_bound} of the augmented Lagrangian function and $\hat{\Phi}$, respectively.
    Substituting $\lambda^k$ from \eqref{eq:lambda_z_relation} in the expression for $(c)$ above, we obtain
    \begin{align}
        (c) &= \frac{\theta}{2} \norm{z^k}^2 - (\theta z^k)^\top \left[Ax^k +z^k - b\right] + \frac{\rho}{2} \norm{Ax^k + z^k - b}^2 \notag\\
        &\phantom{=}\; -(\tau_z \Delta z^k)^\top \left[Ax^k + z^k - b\right]  \notag\\
        &= \frac{\theta}{2} \norm{Ax^k - b}^2 + \frac{\rho - \theta}{2} \norm{Ax^k + z^k - b}^2 \notag\\
        &\phantom{=}\; -(\tau_z \Delta z^k)^\top \left[Ax^k + z^k - b\right] \notag\\
        &\geq \frac{\theta}{2} \big(\pi^k\big)^2 -(\tau_z \Delta z^k)^\top \left[Ax^k + z^k - b\right] \notag\\
        &= \frac{\theta}{2} \big(\pi^k\big)^2 - (\tau_z\Delta z^k)^\top \big(\frac{1}{\rho} \Delta \lambda^k\big), \label{eq:prop_primal_residual_bound_c}
    \end{align}
    where the first inequality follows from $\rho > \theta$ which can be inferred by noting that $\eta_z > 0$ implies (after completing the square in $\tau_z$) that
    $
    (16\tau_z + 16\theta -\rho)^2 + (16 \theta)^2 - (16\theta -\rho)^2 < 0,
    $
    implying $(16 \theta)^2 - (16\theta -\rho)^2 < 0$,
    and hence, $\rho(\rho - 32\theta) > 0$, that is, $\rho > 32\theta > \theta$.
    Equation~\eqref{eq:prop_primal_residual_bound_c} then follows directly from the $\lambda$-update formula \eqref{eq:lambda-update}. %
    The second term in the right-hand side of \eqref{eq:prop_primal_residual_bound_c} can be bounded using: \emph{(i)} the Cauchy-Schwarz inequality, \emph{(ii)} $a_1 a_2 \leq \frac12 (a_1^2 + a_2^2)$ for real $a_1, a_2$, and \emph{(iii)} relation \eqref{eq:lemma_lyapunov_descent_a} from Lemma \ref{lemma:lyapunov_descent}, as follows:
    \begin{align}
        &(\tau_z\Delta z^k)^\top \big(\frac{1}{\rho} \Delta \lambda^k\big) \notag\\
        & \leq \frac{1}{2\rho}\Big( \tau_z^2\norm{\Delta z^k}^2 +  \norm{\Delta \lambda^k}^2 \Big) \notag\\
        &\leq \frac{(\theta + \tau_z)^2}{\rho}\Big( \norm{\Delta z^k}^2 +  \left(\norm{\Delta z^k}^2 + \norm{\Delta z^{k-1}}^2\right) \Big). \label{eq:prop_primal_residual_bound_temp_2}
    \end{align}
    The claim now follows from inequalities \eqref{eq:prop_primal_residual_bound_temp}, \eqref{eq:prop_primal_residual_bound_c}, \eqref{eq:prop_primal_residual_bound_temp_2}, and from the $\Delta z^k$-bound \eqref{eq:delta_z_bound} established in Lemma \ref{lemma:delta_x_delta_z_bounds}.
\end{proof}

\begin{proposition}\label{prop:dual_residual_bound}
    For all $K \geq 1$, there exists $j \in [K]$ such that the dual residual $\delta_t^j \coloneqq \delta_t(x_t^j, \lambda^j)$, defined in \eqref{eq:residual_dual}, satisfies
    \begin{equation}
        \delta_t^j \leq (\rho + \tau_x) \norm{A_t} \sqrt{\frac{(T+1)\big(\Phi^1 - \Phi^K\big)}{K \min\{\eta_x, \eta_z\}}} %
    \end{equation}
    for all $t \in [T]$.
\end{proposition}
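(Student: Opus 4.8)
The plan is to read a first‑order certificate off each block subproblem \eqref{eq:x-update}, recognize that the dual residual $\delta_t^j$ is exactly the size of the perturbation caused by evaluating that subproblem at the \emph{stale} data $x_{\neq t}^{k-1},z^{k-1},\lambda^{k-1}$, and then control that perturbation with the Lyapunov decrease already quantified in Lemma~\ref{lemma:delta_x_delta_z_bounds}. Since $X_t$ is closed (by \ref{assume:nonconvex_sets}) and the objective of \eqref{eq:x-update} is $C^1$ (by \ref{assume:nonconvex_functions}), the first‑order (Fermat) optimality condition at the local minimizer $x_t^k$, applied to \eqref{eq:auglag_nlp_fixed_xt}, yields a certificate
\[
g_t^k \;:=\; \nabla f_t(x_t^k) + A_t^\top\lambda^{k-1} + \rho\,A_t^\top\big(A_t x_t^k + A_{\neq t}x_{\neq t}^{k-1} + z^{k-1} - b\big) + \tau_x A_t^\top A_t\,\Delta x_t^k \;\in\; -N_{X_t}(x_t^k).
\]

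Next I would eliminate $\lambda^{k-1}$ by substituting $A_t^\top\lambda^{k-1} = A_t^\top\lambda^k - \rho A_t^\top(Ax^k + z^k - b)$ from the $\lambda$-update \eqref{eq:lambda-update}. The $\rho A_t^\top A_t x_t^k$ and $\rho A_t^\top b$ contributions cancel, and collecting the remaining first differences produces the identity
\[
\nabla f_t(x_t^k) + A_t^\top\lambda^k \;=\; g_t^k \;+\; \rho\,A_t^\top\Big(\sum_{s\in[T]\setminus\{t\}} A_s\Delta x_s^k + \Delta z^k\Big) \;-\; \tau_x A_t^\top A_t\,\Delta x_t^k .
\]
Because $g_t^k\in -N_{X_t}(x_t^k)$, the definition \eqref{eq:residual_dual} of $\delta_t^k$ and the bound $\norm{A_t^\top v}\le\norm{A_t}\,\norm{v}$ give
\[
\delta_t^k \;\le\; \norm{A_t}\,\norm{\rho\sum_{s\in[T]\setminus\{t\}} A_s\Delta x_s^k \;+\; \rho\,\Delta z^k \;-\; \tau_x A_t\Delta x_t^k}.
\]

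The vector inside this last norm is a sum of exactly $T+1$ vectors, so by $\norm{\sum_{i=1}^{T+1} v_i}^2\le (T+1)\sum_i\norm{v_i}^2$, then bounding $\rho^2$, $\tau_x^2$, and $\max\{\rho^2,\tau_x^2\}$ all by $(\rho+\tau_x)^2$, and using $\norm{A_s\Delta x_s^k}^2=\norm{\Delta x_s^k}^2_{A_s^\top A_s}$, I obtain
\[
\delta_t^k \;\le\; (\rho+\tau_x)\,\norm{A_t}\,\sqrt{(T+1)\Big(\sum_{s=1}^T\norm{\Delta x_s^k}^2_{A_s^\top A_s} + \norm{\Delta z^k}^2\Big)}.
\]
To finish, I would invoke Lemma~\ref{lemma:delta_x_delta_z_bounds}: the index $j$ constructed in its proof in fact satisfies the combined inequality $\eta_x\sum_{s}\norm{\Delta x_s^j}^2_{A_s^\top A_s} + \eta_z\norm{\Delta z^j}^2 \le (\Phi^1-\Phi^K)/K$, obtained by summing \eqref{eq:delta_phi} over $k\in[K]$ before discarding the lagging $\|\Delta x_s^{j-1}\|$ and $\|\Delta z^{j-1}\|$ terms; hence $\sum_{s}\norm{\Delta x_s^j}^2_{A_s^\top A_s} + \norm{\Delta z^j}^2 \le (\Phi^1-\Phi^K)/(K\min\{\eta_x,\eta_z\})$. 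Taking $k=j$ yields the claimed bound, with the same $j$ valid for every $t\in[T]$.

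The algebra of the first two steps and the Cauchy–Schwarz inequality of the third are routine; the one genuinely delicate point is landing the tight constant $(T+1)$ under the radical. This forces two specific choices: keeping the $T-1$ stale blocks, the $z$-increment, and the $t$-th proximal term together under a \emph{single} Cauchy–Schwarz over $T+1$ summands rather than peeling them off with separate triangle inequalities (which would inflate the constant), and using the combined descent estimate on $(\Delta x^j,\Delta z^j)$ rather than adding the two separate bounds stated in Lemma~\ref{lemma:delta_x_delta_z_bounds} (which would introduce a spurious factor of $2$).
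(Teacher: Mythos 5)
Your proof is correct and follows essentially the same route as the paper's: first-order optimality of \eqref{eq:x-update}, elimination of $\lambda^{k-1}$ via \eqref{eq:lambda-update}, a single Cauchy--Schwarz bound over the $T+1$ increment terms, and then Lemma~\ref{lemma:delta_x_delta_z_bounds}. Your closing remark about invoking the combined descent estimate from the proof of Lemma~\ref{lemma:delta_x_delta_z_bounds} (rather than adding its two separate bounds, which would cost a factor of $2$) is a valid reading and merely makes explicit how the constant $(T+1)$ stated here, as opposed to the $2(T+1)$ appearing in Theorem~\ref{theorem:global_convergence}, is obtained.
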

\begin{proof}
    Fix $t \in [T]$. Since $x_t^{k}$ is locally optimal in problem~\eqref{eq:x-update}, the first-order optimality conditions imply, for $k \geq 1$:
    \begin{equation*}
    \left(
       \begin{aligned}
           \nabla f_t(x_t^k) + A_t^\top \lambda^{k-1} + \tau_x A_t^\top A_t \Delta x_t^k \\
           + \rho A_t^\top \left[ A_tx_t^k + A_{\neq t} x_{\neq t}^{k-1} + z^{k-1} - b \right]
       \end{aligned}
    \right)
     \in -N_{X_t}(x_t^k).
    \end{equation*}
    Substituting $\lambda^{k-1} = \lambda^k - \rho [Ax^k + z^k - b]$ from \eqref{eq:lambda-update}:
    \begin{equation*}
        \left(
        \begin{aligned}
            \nabla f_t(x_t^k) + A_t^\top \lambda^{k} + \tau_x A_t^\top A_t \Delta x_t^k \\
            -\rho A_t^\top A_{\neq t} \Delta x_{\neq t}^k - \rho A_t^\top \Delta z^k
        \end{aligned}
        \right)
        \in -N_{X_t}(x_t^k).
    \end{equation*}
    Definition \eqref{eq:residual_dual} of
    $\delta_t^k = \dist{\nabla f_t(x_t^k) + A_t^\top \lambda^k}{-N_{X_t}(x_t^k)}$
    implies that the latter quantity is bounded from above by:
    \begin{align*}
        &(\delta_t^k)^2
        \leq \norm{-\rho A_t^\top A_{\neq t} \Delta x_{\neq t}^k - \rho A_t^\top \Delta z^k + \tau_x A_t^\top A_t \Delta x_t^k}^2 \\
        &\leq \norm{A_t}^2 \left(  \rho \norm{A_{\neq t} \Delta x_{\neq t}^k} +  \rho \norm{\Delta z^k} +  \tau_x \norm{A_t \Delta x_t^k} \right)^2 \\
        &\leq (T+1)(\rho + \tau_x)^2\norm{A_t}^2 \Big(\sum_{s=1}^T \norm{\Delta x_s^k}_{A_s^\top A_s}^2 + \norm{\Delta z^k}^2 \Big).
    \end{align*}
    where the inequality on the second line follows from the Cauchy-Schwarz and triangle inequalities, and the last inequality follows from $\max\{\rho, \tau_x\} \leq (\rho + \tau_x)$ and $(a_1 + \ldots + a_N)^2 \leq N (a_1^2 + \ldots a_N^2)$ for arbitrary real $a_1, \ldots, a_N$.
    The claim now follows from the bounds \eqref{eq:delta_z_bound}, \eqref{eq:delta_x_bound} in Lemma \ref{lemma:delta_x_delta_z_bounds}.
\end{proof}

\section{Local convergence proof}\label{sec:local_convergence_proof}
Throughout this section, we suppose that the conditions outlined in Theorem~\ref{theorem:local_convergence} hold.
Also, for $k \geq 1$, we define $\mu^k \coloneqq (\mu_1^k, \ldots, \mu_T^k)$, where $\mu_t^k$ is the optimal Lagrange multiplier vector of the $x_t$-subproblem~\eqref{eq:x-update} at iteration $k$.

The key steps of the proof are as follows.
We first introduce the following perturbed variant of problem~\eqref{eq:nlp}
that is parameterized by $p \in \mathbb{R}^m$ and $d = (d_1, \ldots, d_T, d_z) \in \mathbb{R}^{n+m}$.
\begin{equation}\label{eq:nlp_perturbed}\tag{$S(p, d)$}
    \begin{aligned}
        \displaystyle\mathop{\text{minimize}}_{x_1, \ldots, x_T, z} \;\;& \displaystyle \sum_{t = 1}^T \left[ f_t(x_t) - d_t^\top x_t \right] + \frac{\theta}{2} \norm{z}^2 - d_z^\top z \\
        \text{subject to} \;\;& \displaystyle c_t(x_t) = 0, \;\; t \in [T], \\
        & \displaystyle Ax  + z = b + p.
    \end{aligned}
\end{equation}
Observe that $S(0,0)$ coincides precisely with problem~\eqref{eq:nlp}.
By defining $p^k$ and $d^k$ to be the primal and dual residuals of problem~\eqref{eq:nlp} at $(x^k, z^k)$ as per eq.~\eqref{eq:perturbation}, Lemma \ref{lemma:nlp_sensitivity} and \ref{lemma:iterates_and_nlp_perturbed} show that $(x^k, z^k, \mu^k, \lambda^k)$ must  coincide precisely with the optimal solution of $S(p^k, d^k)$, whenever the former is close to $(x^*, z^*, \mu^*, \lambda^*)$ and $p^k$, $d^k$ are sufficiently close to $0$.
This allows us to bound the difference in the augmented Lagrangian function values of problem~\eqref{eq:nlp} evaluated at its optimal solution and at an arbitrary iterate (Lemma~\ref{lemma:lower_bound} and \ref{lemma:upper_bound}).
In Lemma~\ref{lemma:subproblem_sensitivity}, we establish that it is sufficient to have $(Dx^k, z^k, \lambda^k)$ to be close to $(Dx^*, z^*, \lambda^*)$.
Finally, Proposition~\ref{prop:local_convergence} proves that whenever the latter is true, then the sequence $\{(x^k, z^k)\}$ must converge to $(x^*, z^*)$.
This requires some intermediate results that are proved in Lemma \ref{lemma:convergence_of_residuals} and \ref{lemma:intermediate}.
The proof of Theorem~\ref{theorem:local_convergence} follows directly from that of Proposition~\ref{prop:local_convergence}.
Indeed, its first part is already true due to Theorem~\ref{theorem:global_convergence}, whereas its second part follows from Assumption~\ref{assume:sosc} if we set $w_z = 0$ therein.

\begin{lemma}\label{lemma:nlp_sensitivity}
    There exist $C^1$ functions $\hat{x}$, $\hat{z}$, $\hat{\mu}$, $\hat{\lambda}$ with domain $\mathbb{R}^m \times \mathbb{R}^{n+m}$
    and constants $\epsilon_1, \epsilon_2 > 0$ such that
    \begin{enumerate}
        \item $(\hat{x}(0, 0), \hat{z}(0, 0), \hat{\mu}(0, 0), \hat{\lambda}(0, 0)) = (x^*, z^*, \mu^*, \lambda^*)$.
        \item For all $(p, d) \in B_{\epsilon_1}(0)$,
        $(\hat{x}(p, d), \hat{z}(p, d))$ is a local minimizer of~\ref{eq:nlp_perturbed}
        where the LICQ and SOSC assumptions are satisfied
        with optimal Lagrange multipliers $(\hat{\mu}(p, d), \hat{\lambda}(p, d))$.
        \item The functions
        $\hat{x}$, $\hat{z}$, $\hat{\mu}$, $\hat{\lambda}$ map to locally unique points in the sense that,
        for all $(p, d) \in B_{\epsilon_1}(0)$,
        if
        $(\bar{x}, \bar{z})$ is a local minimizer (or a first-order stationary point) of \ref{eq:nlp_perturbed} with optimal Lagrange multipliers $(\bar{\mu}, \bar{\lambda})$,
        and if $(\bar{x}, \bar{z}, \bar{\lambda}, \bar{\mu}) \in B_{\epsilon_2}((x^*, z^*, \mu^*, \lambda^*))$,
        then
        $(\bar{x}, \bar{z}, \bar{\lambda}, \bar{\mu}) = (\hat{x}(p, d), \hat{z}(p, d), \hat{\mu}(p, d), \hat{\lambda}(p, d))$.
    \end{enumerate}
\end{lemma}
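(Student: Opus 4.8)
The plan is to recognize Lemma~\ref{lemma:nlp_sensitivity} as the classical parametric sensitivity theorem for equality-constrained nonlinear programs under LICQ and second-order sufficiency, specialized to the family~\eqref{eq:nlp_perturbed}. Since \eqref{eq:nlp_perturbed} has only equality constraints, its first-order (KKT) stationarity conditions form a square system $F(v;p,d)=0$ in the primal--dual vector $v \coloneqq (x,z,\mu,\lambda)$:
\begin{gather*}
    \nabla f_t(x_t) - d_t + \nabla c_t(x_t)\mu_t + A_t^\top\lambda = 0, \quad t \in [T], \\
    \theta z - d_z + \lambda = 0, \qquad c_t(x_t) = 0, \quad t \in [T], \qquad Ax + z - b - p = 0.
\end{gather*}
By assumptions~\ref{assume:nonconvex_functions} and~\ref{assume:equality_constraints}, $F$ is $C^1$ jointly in $(v,p,d)$; by assumptions~\ref{assume:licq}--\ref{assume:sosc}, $(x^*,z^*)$ is a local solution of $S(0,0)$ (which coincides with~\eqref{eq:nlp}) at which LICQ holds, hence it satisfies this system with uniquely determined multipliers $(\mu^*,\lambda^*)$ --- precisely those appearing in~\ref{assume:sosc} --- so $F(v^*;0,0)=0$ with $v^* \coloneqq (x^*,z^*,\mu^*,\lambda^*)$.

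The crux is to show that the Jacobian $\partial F/\partial v$ at $(v^*;0,0)$ is nonsingular; the implicit function theorem then delivers everything else. This Jacobian is the KKT matrix
\[
    M = \begin{bmatrix}
        H & 0 & \nabla c(x^*) & A^\top \\
        0 & \theta\,\mathrm{I} & 0 & \mathrm{I} \\
        \nabla c(x^*)^\top & 0 & 0 & 0 \\
        A & \mathrm{I} & 0 & 0
    \end{bmatrix}, \qquad H \coloneqq \nabla^2_{xx}\Lambda(x^*,\mu^*,\lambda^*),
\]
i.e.\ the KKT matrix of a quadratic program with Hessian $\mathrm{diag}(H,\theta\,\mathrm{I})$ and constraint Jacobian $B \coloneqq \left[\begin{smallmatrix}\nabla c(x^*)^\top & 0 \\ A & \mathrm{I}\end{smallmatrix}\right]$. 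By the standard nonsingularity lemma for saddle-point matrices (e.g.~\cite{nocedal_wright_2006}), $M$ is invertible provided (i) $B$ has full row rank and (ii) $\mathrm{diag}(H,\theta\,\mathrm{I})$ is positive definite on $\ker B$. Property~(i) follows from~\ref{assume:licq}: the identity block in the $z$-columns makes the bottom $m$ rows $[A\;\mathrm{I}]$ independent of everything, while the top $r$ rows $[\nabla c(x^*)^\top\;0]$ are independent because $J(x^*)$ has full row rank. Property~(ii) is verbatim assumption~\ref{assume:sosc}, since $\nabla^2_{xx}\Lambda_\theta = \nabla^2_{xx}\Lambda = H$ and $(w_x,w_z)\in\ker B$ exactly when $\nabla c(x^*)^\top w_x = 0$ and $Aw_x + w_z = 0$.

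With $M$ nonsingular, the implicit function theorem supplies $\epsilon_1,\epsilon_2 > 0$ and $C^1$ maps $(\hat x,\hat z,\hat\mu,\hat\lambda)$ on a neighborhood of $(0,0)$ with $F(\hat x(p,d),\hat z(p,d),\hat\mu(p,d),\hat\lambda(p,d);p,d)=0$, value $v^*$ at $(0,0)$, and the property that this is the \emph{only} zero of $F(\,\cdot\,;p,d)$ inside $B_{\epsilon_2}(v^*)$; extending the maps arbitrarily (e.g.\ by constants) off the neighborhood produces the declared domain $\mathbb{R}^m \times \mathbb{R}^{n+m}$. This yields claim~(1) and the uniqueness claim~(3) at once, because any local minimizer or first-order stationary point of~\eqref{eq:nlp_perturbed} \emph{with} multipliers is by definition a zero of $F(\,\cdot\,;p,d)$. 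For claim~(2) it remains to upgrade ``stationary point'' to ``local minimizer at which LICQ and SOSC hold'', which I would obtain by an openness argument: full row rank of the constraint Jacobian of~\eqref{eq:nlp_perturbed} at $\hat x(p,d)$ persists under small $(p,d)$, the least eigenvalue of the corresponding reduced Hessian varies continuously in $(p,d)$ so the SOSC strict inequality survives, and then LICQ with first-order stationarity and SOSC forces a strict local minimizer (\cite{nocedal_wright_2006}); one shrinks $\epsilon_1$ as needed.

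I expect the main obstacle to be bookkeeping rather than a deep step. Nonsingularity of $M$ is routine once one notices that the $z$-block identities force $B$ to have full row rank and collapse the second-order condition to exactly~\ref{assume:sosc}, so the only genuine care lies in part~(2): one must verify that LICQ and SOSC are stable under the perturbation --- so that $(\hat x(p,d),\hat z(p,d))$ is a bona fide local minimizer and not merely a KKT point --- and shrink $\epsilon_1$ accordingly.
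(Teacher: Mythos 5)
Your proposal is correct and follows essentially the same route as the paper: the paper verifies that LICQ (\ref{assume:licq}) and SOSC (\ref{assume:sosc}) hold at $(x^*,z^*)$ and then invokes the classical NLP sensitivity theorem (Fiacco's Theorem~2.1) applied to \ref{eq:nlp_perturbed}, which is exactly the result you re-derive. The only difference is that you inline the standard proof of that theorem (nonsingularity of the KKT matrix under LICQ and SOSC, the implicit function theorem, and persistence of LICQ/SOSC under small perturbations) rather than citing it.
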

\begin{proof}
    The LICQ \ref{assume:licq} and SOSC assumptions \ref{assume:sosc} are satisfied at $(x^*, z^*)$,
    The claims then follow from a direct application of \cite[Theorem~2.1]{fiacco1976sensitivity} to \ref{eq:nlp_perturbed}.
\end{proof}

\begin{lemma}\label{lemma:iterates_and_nlp_perturbed}
    For all $k \geq 1$ such that $(p^k, d^k) \in B_{\epsilon_1}(0)$ and $(x^k, z^k, \mu^k, \lambda^k) \in B_{\epsilon_2}((x^*, z^*, \mu^*, \lambda^*))$, where $\epsilon_1, \epsilon_2$ are defined in Lemma~\ref{lemma:nlp_sensitivity} and $p^k$ and $d^k$ from \eqref{eq:perturbation}, we have:
    \begin{equation*}
        (\hat{x}(p^k, d^k), \hat{z}(p^k, d^k), \hat{\mu}(p^k, d^k), \hat{\lambda}(p^k, d^k)) = (x^k, z^k, \mu^k, \lambda^k).
    \end{equation*}
\end{lemma}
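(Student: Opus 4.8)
The plan is to verify that the iterate quadruple $(x^k, z^k, \mu^k, \lambda^k)$ satisfies the first-order (KKT) stationarity conditions of the perturbed problem~\ref{eq:nlp_perturbed} with parameters $(p^k, d^k)$ given by~\eqref{eq:perturbation}, and then invoke the local-uniqueness part (item~3) of Lemma~\ref{lemma:nlp_sensitivity} to conclude that this quadruple must equal $(\hat{x}(p^k,d^k), \hat{z}(p^k,d^k), \hat{\mu}(p^k,d^k), \hat{\lambda}(p^k,d^k))$. So the core of the argument is a direct algebraic check: the hypotheses $(p^k, d^k)\in B_{\epsilon_1}(0)$ and $(x^k,z^k,\mu^k,\lambda^k)\in B_{\epsilon_2}((x^*,z^*,\mu^*,\lambda^*))$ are precisely what is needed to apply item~3, so all the real work is in showing that the iterates are a stationary point of $S(p^k, d^k)$.

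**Key steps.**
First I would write out the KKT system of $S(p^k, d^k)$: primal feasibility $c_t(x_t)=0$ for $t\in[T]$ and $Ax + z = b + p^k$; stationarity in each $x_t$, namely $\nabla f_t(x_t) - d_t^k + \nabla c_t(x_t)\mu_t + A_t^\top\lambda \in -N_{X_t}(x_t)$ (here $N_{X_t}(x_t)=\{0\}$ since $X_t$ is a smooth equality manifold, but keeping the normal cone makes the link to~\eqref{eq:residual_dual} transparent); and stationarity in $z$, namely $\theta z - d_z^k + \lambda = 0$. Then I would check each condition against the algorithm's updates. Primal feasibility in $x_t$ holds because $x_t^k$ solves subproblem~\eqref{eq:x-update}, whose feasible set is $X_t = \{c_t(x_t)=0\}$. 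The coupling equation $Ax^k + z^k = b + p^k$ holds by the \emph{definition}~\eqref{eq:perturbation:primal} of $p^k$. The $z$-stationarity $\theta z^k + \lambda^k = d_z^k = -\tau_z\Delta z^k$ is exactly Lemma~\ref{lemma:lambda_z_relation}. Finally, the $x_t$-stationarity: the first-order conditions of~\eqref{eq:x-update} (as already written in the proof of Proposition~\ref{prop:dual_residual_bound}) read $\nabla f_t(x_t^k) + \nabla c_t(x_t^k)\mu_t^k + A_t^\top\lambda^{k-1} + \tau_x A_t^\top A_t \Delta x_t^k + \rho A_t^\top[A_t x_t^k + A_{\neq t}x_{\neq t}^{k-1} + z^{k-1} - b] = 0$; substituting $\lambda^{k-1} = \lambda^k - \rho[Ax^k + z^k - b]$ from~\eqref{eq:lambda-update} and regrouping yields $\nabla f_t(x_t^k) + \nabla c_t(x_t^k)\mu_t^k + A_t^\top\lambda^k - \rho A_t^\top A_{\neq t}\Delta x_{\neq t}^k - \rho A_t^\top \Delta z^k + \tau_x A_t^\top A_t \Delta x_t^k = 0$, and the last three terms are precisely $-d_t^k$ by definition~\eqref{eq:perturbation:dual_xt}. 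Hence $(x^k,z^k,\mu^k,\lambda^k)$ is a stationary point of $S(p^k,d^k)$ with multipliers $(\mu^k,\lambda^k)$.

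**Main obstacle.**
The calculations themselves are routine bookkeeping; the only subtlety is making sure the Lagrange multiplier $\mu^k$ that appears in the perturbed KKT system is exactly the subproblem multiplier defined at the start of this section, and that Assumption~\ref{assume:licq:subproblem} guarantees such a multiplier exists and is unique (so that ``$x_t^k$ solves~\eqref{eq:x-update}'' genuinely delivers a well-defined $\mu_t^k$). Once that is in place, since the quadruple lies in $B_{\epsilon_2}$ around $(x^*,z^*,\mu^*,\lambda^*)$ and $(p^k,d^k)\in B_{\epsilon_1}(0)$, item~3 of Lemma~\ref{lemma:nlp_sensitivity} applies verbatim (it covers first-order stationary points, not just local minimizers), giving the claimed equality. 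I would therefore expect the write-up to be short: state the KKT system, match each equation to~\eqref{eq:x-update}, Lemma~\ref{lemma:lambda_z_relation}, and the definitions~\eqref{eq:perturbation}, then cite Lemma~\ref{lemma:nlp_sensitivity}(3).
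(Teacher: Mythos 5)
Your proposal is correct and follows essentially the same route as the paper: verify that $(x^k,z^k,\mu^k,\lambda^k)$ satisfies the KKT system of $S(p^k,d^k)$ by matching the first-order conditions of~\eqref{eq:x-update} (after substituting the $\lambda$-update) with~\eqref{eq:perturbation:dual_xt}, using Lemma~\ref{lemma:lambda_z_relation} for the $z$-stationarity and~\eqref{eq:perturbation:primal} for feasibility, then conclude via the local-uniqueness part of Lemma~\ref{lemma:nlp_sensitivity}. The only difference is cosmetic (you keep the normal-cone notation before specializing to equality constraints), so no further changes are needed.
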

\begin{proof}
    We first show that $(x^k, z^k, \mu^k, z^k)$ satisfy the KKT conditions of $S(p^k, d^k)$.
    To see this, first note that $x_t^k$ is locally optimal in problem~\eqref{eq:x-update}.
    Under assumption~\ref{assume:licq:subproblem}, this implies that it satisfies the first-order optimality condition:
    \begin{align*}
        \left(\begin{aligned}
            \nabla f_t(x_t^k) + \nabla c_t(x_t^k) \mu_t^k + A_t^\top \lambda^{k-1} + \tau_x A_t^\top A_t \Delta x_t^k \\
            + \rho A_t^\top \left[ A_tx_t^k + A_{\neq t} x_{\neq t}^{k-1} + z^{k-1} - b \right]
        \end{aligned}\right) = 0.
    \end{align*}
    Using the $\lambda$-update~\eqref{eq:lambda-update} and eq.~\eqref{eq:perturbation:dual_xt}, this is equivalent to:
    \begin{align*}
        \nabla f_t(x_t^k) + \nabla c_t(x_t^k) \mu_t^k + A_t^\top \lambda^{k} - d_t^k = 0.
    \end{align*}
    Second, note that Lemma~\ref{lemma:lambda_z_relation} along with equation~\eqref{eq:perturbation:dual_z} implies:
    \begin{align*}
        \theta z^k + \lambda^k = d_z^k.
    \end{align*}
    Finally, equation~\eqref{eq:perturbation:primal} can be equivalently written as follows:
    \begin{align*}
        Ax^k + z^k = b + p^k.
    \end{align*}
    Along with $c_t(x_t^k) = 0$, the last three equations are precisely the KKT conditions of $S(p^k, d^k)$.
    The result now follows from the third part of Lemma~\ref{lemma:nlp_sensitivity}.
\end{proof}

\begin{lemma}\label{lemma:lower_bound}
    There exist constants $\rho_1, \epsilon_3 > 0$ such that for all $\rho > \rho_1$ and $k \geq 1$ where $(x^k, z^k) \in B_{\epsilon_3}((x^*, z^*))$, we have:
    \begin{equation}\label{eq:lower_bound}
        \begin{aligned}
        &\sum_{t \in [T]} f_t(x_t^k) + \frac{\theta}{2}\norm{z^k}^2
        +(\lambda^*)^\top p^k + \frac{\rho}{4}\norm{p^k}^2 \\
        &\;\;\geq
        \sum_{t \in [T]} f_t(x_t^*) + \frac{\theta}{2}\norm{z^*}^2.
        \end{aligned}
    \end{equation}
\end{lemma}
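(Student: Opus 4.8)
\emph{Plan.} The plan is to read the inequality as a local exactness statement for the augmented Lagrangian of \eqref{eq:nlp} and to push the classical proof through while keeping the radius $\epsilon_3$ bounded away from $0$ as $\rho \to \infty$. Since $Ax^* + z^* = b$, the claim \eqref{eq:lower_bound} is equivalent to $g(x^k, z^k) \ge g(x^*, z^*)$, where
\begin{equation*}
g(x,z) \coloneqq \sum_{t \in [T]} f_t(x_t) + \frac{\theta}{2}\norm{z}^2 + (\lambda^*)^\top(Ax + z - b) + \frac{\rho}{4}\norm{Ax + z - b}^2 ,
\end{equation*}
i.e.\ $(x^*, z^*)$ is a strict local minimizer of $g$ over the manifold $\mathcal{M} \coloneqq \{(x,z) : c_t(x_t) = 0,\ t \in [T]\}$, with a neighborhood whose radius is uniform in $\rho \ge \rho_1$. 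Because $x_t^k \in X_t$ forces $c_t(x_t^k) = 0$, only membership of $(x^k, z^k)$ in $\mathcal{M}$ is used, not that it is an iterate.

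First I would introduce $G(x,z) \coloneqq g(x,z) + \sum_{t} (\mu_t^*)^\top c_t(x_t)$, which coincides with $g$ on $\mathcal{M}$, and decompose $G = \widetilde{G} + \tfrac{\rho}{4}\norm{Ax+z-b}^2$ with $\widetilde{G}(x,z) \coloneqq \sum_t [f_t(x_t) + (\mu_t^*)^\top c_t(x_t)] + \tfrac{\theta}{2}\norm{z}^2 + (\lambda^*)^\top(Ax+z-b)$. Using the KKT conditions of \eqref{eq:nlp} at $(x^*, z^*)$ (available by \ref{assume:licq}), i.e.\ $\nabla f_t(x_t^*) + \nabla c_t(x_t^*)\mu_t^* + A_t^\top \lambda^* = 0$ and $\theta z^* + \lambda^* = 0$, one verifies $\nabla\widetilde{G}(x^*, z^*) = \nabla G(x^*, z^*) = 0$; and a direct computation yields the $\rho$-independent matrix $N_0 \coloneqq \nabla^2\widetilde{G}(x^*, z^*) = \mathop{\mathrm{diag}}\!\big(\nabla^2_{xx}\Lambda_\theta(x^*, z^*, \mu^*, \lambda^*),\ \theta\mathrm{I}\big)$. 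Setting $w \coloneqq (x^k - x^*, z^k - z^*)$ and $B \coloneqq [A\ \ \mathrm{I}]$ (so that $Bw = p^k$ and $\norm{N_0}$, as well as the modulus of continuity $\omega$ of the continuous map $\nabla^2\widetilde{G}$ near $(x^*,z^*)$, are independent of $\rho$), a second-order Taylor expansion of $\widetilde{G}$ \emph{only} — keeping the penalty term exact — gives, for some $\xi$ on the segment joining $(x^*, z^*)$ and $(x^k, z^k)$,
\begin{equation*}
g(x^k, z^k) - g(x^*, z^*) = \tfrac12 w^\top \nabla^2\widetilde{G}(\xi)\, w + \tfrac{\rho}{4}\norm{p^k}^2 \ \ge\ \tfrac12 w^\top N_0\, w + \tfrac{\rho}{4}\norm{Bw}^2 - \tfrac12 \omega(\norm{w})\norm{w}^2 .
\end{equation*}

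The core step is to find $\rho$-independent constants $\rho_1, \epsilon_3, \sigma > 0$ such that $\tfrac12 w^\top N_0 w + \tfrac{\rho}{4}\norm{Bw}^2 \ge \sigma\norm{w}^2$ for every $\rho \ge \rho_1$ and every admissible $w$ — where ``admissible'' means $\norm{w} < \epsilon_3$ and $\dist{w}{V} = O(\norm{w}^2)$ for $V \coloneqq \ker\nabla c(x^*)^\top \times \mathbb{R}^m$ (the latter holds since $c_t(x_t^k) = c_t(x_t^*) = 0$ and $\nabla c(x^*)^\top$ has full row rank by \ref{assume:licq}). I would argue by dichotomy after fixing a small $\delta > 0$. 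If $\norm{Bw} \ge \delta\norm{w}$, then $\tfrac{\rho}{4}\norm{Bw}^2 \ge \tfrac{\rho\delta^2}{4}\norm{w}^2$ dominates $\tfrac12\norm{N_0}\norm{w}^2$ once $\rho$ exceeds a threshold $\rho_1$ depending only on $\delta, \norm{N_0}, \sigma$. If instead $\norm{Bw} < \delta\norm{w}$, then $w$ lies within $O(\norm{Bw}) = O(\delta\norm{w})$ of $\ker B$ (since $B$ has full row rank because of its identity block) and within $O(\norm{w}^2)$ of $V$, hence — by the standard inequality $\dist{w}{V \cap \ker B} \le C\,(\dist{w}{V} + \dist{w}{\ker B})$ for linear subspaces — within $O((\delta + \norm{w})\norm{w})$ of $V \cap \ker B$; on that subspace $w^\top N_0 w = w_x^\top \nabla^2_{xx}\Lambda_\theta(x^*, z^*, \mu^*, \lambda^*) w_x + \theta\norm{w_z}^2$ is positive definite — which is precisely assumption \ref{assume:sosc}, and uniformly so because that cone is a linear subspace — giving $w^\top N_0 w \ge \sigma_1\norm{w}^2 - O((\delta + \norm{w})\norm{N_0}\norm{w}^2)$; shrinking $\delta$ and then $\epsilon_3$ (further so that $\omega(\norm{w}) < \sigma$ when $\norm{w} < \epsilon_3$) makes the core bound hold, and then the display of the previous paragraph yields $g(x^k, z^k) - g(x^*, z^*) \ge \tfrac{\sigma}{2}\norm{(x^k, z^k) - (x^*, z^*)}^2 \ge 0$.

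The one genuine obstacle is forcing all constants to be independent of $\rho$: a plain second-order expansion of $g$ is useless here because $\norm{\nabla^2 g} = O(\rho)$, which would make the Taylor remainder force $\epsilon_3 = O(1/\rho)$. The fix — the only nonroutine idea — is to expand only the $\rho$-free part $\widetilde{G}$ and keep $\tfrac{\rho}{4}\norm{Ax+z-b}^2$ exact, so that all $\rho$-dependence sits in the nonnegative term $\tfrac{\rho}{4}\norm{Bw}^2$, which the dichotomy then plays against the ($\rho$-independent) indefiniteness of $N_0$ off the SOSC cone, as in the classical augmented-Lagrangian exactness analysis \cite{bertsekas2014constrained}, the extra subtlety being that $w$ only lies \emph{near} — not in — the cone $V \cap \ker B$.
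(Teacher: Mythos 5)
Your proof is correct, and it takes a genuinely different route from the paper's. The paper forms the \emph{fully} augmented Lagrangian $\mathcal{S}(x,z)=\Lambda_\theta(x,z,\mu^*,\lambda^*)+\tfrac{\rho}{4}\sum_t\norm{c_t(x_t)}^2+\tfrac{\rho}{4}\norm{Ax+z-b}^2$, invokes \cite[Lemma~3.2.1]{bertsekas1999nonlinear} together with \ref{assume:sosc} to get $\nabla^2\mathcal{S}(x^*,z^*)\succ 0$ with a $\rho$-independent eigenvalue margin for $\rho>\rho_1$, and then argues by continuity that $(x^*,z^*)$ is an unconstrained local minimizer of $\mathcal{S}$ on a ball whose radius does not depend on $\rho$; the lemma follows from $\mathcal{S}(x^k,z^k)\ge\mathcal{S}(x^*,z^*)$ and $c_t(x_t^k)=0$. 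You instead stay on the manifold $\{c(x)=0\}$ (never penalizing $c_t$), Taylor-expand only the $\rho$-free part $\widetilde G$, keep the exactly quadratic coupling penalty intact, and run a dichotomy on $\norm{Bw}/\norm{w}$ against the SOSC subspace $V\cap\ker B$ of \ref{assume:sosc}, using a bounded-linear-regularity bound $\dist{w}{V\cap\ker B}\le C(\dist{w}{V}+\dist{w}{\ker B})$ and the fact that $c_t(x_t^k)=c_t(x_t^*)=0$ forces $\dist{w}{V}=O(\norm{w}^2)$. In effect you re-prove, inline and with explicit constants, the exactness result the paper outsources to Bertsekas; what this buys is that the uniformity of $\epsilon_3$ in $\rho$ is completely transparent, whereas the paper's continuity step is terse on exactly this point (for $\mathcal{S}$ the Hessian at nearby points carries $\rho$-scaled terms $\tfrac{\rho}{2}\sum_i c_{t,i}(x_t)\nabla^2 c_{t,i}(x_t)$ of size $O(\rho\epsilon)$, so strictly one should add, e.g., a monotonicity-in-$\rho$ observation such as $\mathcal{S}_\rho\ge\mathcal{S}_{\rho_1}$ with equality at $(x^*,z^*)$). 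What the paper's route buys is brevity. One minor quibble: your stated obstacle that a plain expansion of $g$ fails because $\norm{\nabla^2 g}=O(\rho)$ is overstated for \emph{your} $g$, since its $\rho$-dependent term is exactly quadratic in $(x,z)$ and hence contributes no $\rho$-dependent Taylor remainder; but that observation is precisely your fix, so nothing in the argument is affected.
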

\begin{proof}
    Define the (fully) augmented Lagrangian function of $S(0, 0)$ with respect to the optimal multipliers $(\mu^*, \lambda^*)$, as follows $\mathcal{S} : \mathbb{R}^n \times \mathbb{R}^{m} \mapsto \mathbb{R}$, where $\mathcal{S}(x, z)$ is given by:
    \begin{equation*}%
        \Lambda_\theta(x, z, \mu^*, \lambda^*) + \frac{\rho}{4} \sum_{t = 1}^T \norm{c_t(x_t)}^2 + \frac{\rho}{4} \norm{Ax + z - b}^2
    \end{equation*}
    The definition of $(x^*, z^*)$ as a local minimizer of problem $S(0,0)$ satisfying the LICQ assumption, means that $\nabla \mathcal{S}(x^*, z^*) = 0$ and $c_t(x^*_t) = 0$.
    The latter also implies that the Hessian $\nabla^2 \mathcal{S}(x^*, z^*)$ is given by:
    \[
    \begin{bmatrix}
        \nabla^2_{xx} \Lambda(x^*, z^*, \mu^*, \lambda^*) & 0 \\
        0 & \theta \mathrm{I}
    \end{bmatrix}
    + \frac{\rho}{2}
    \begin{bmatrix}
        J(x^*)^\top J(x^*) & A^\top \\
        A & \mathrm{I}
    \end{bmatrix}.
    \]
    Assumption~\ref{assume:sosc} and \cite[Lemma~3.2.1]{bertsekas1999nonlinear} ensure the existence of $\rho_1 > 0$ such that
    $\nabla^2 \mathcal{S}(x^*, z^*)$ is positive definite and that its minimum eigenvalue is larger than some fixed $\epsilon_3' > 0$,
    for all $\rho > \rho_1$.
    Continuity of $\nabla^2 \mathcal{S}$ along with the second-order sufficient conditions for unconstrained minimization then imply the existence of $\epsilon_3 > 0$ such that
    $(x^*, z^*)$ is a local minimizer of $\mathcal{S}$ in some neighborhood of radius $\epsilon_3$ around
    $(x^*, z^*)$. %
    Note that the radius $\epsilon_3$ is independent of $\rho$ as long as the minimum eigenvalue of $\nabla^2 \mathcal{S}(x^*, z^*)$ remains larger than $\epsilon_3' > 0$.
    The statement of the lemma then follows by noting that $\mathcal{S}(x^k, z^k) \geq \mathcal{S}(x^*, z^*)$ and after substituting the expressions for $\mathcal{S}(x, z)$ and for $p^k$ from~\eqref{eq:perturbation:primal}.
\end{proof}

\begin{lemma}\label{lemma:upper_bound}
    There exist constants $\rho_2, \epsilon_4 > 0$ such that for all $\rho > \rho_2$ and $k \geq 1$ where $(p^k, d^k) \in B_{\epsilon_4}(0)$ and $(x^k, z^k, \mu^k, \lambda^k) \in B_{\epsilon_2}((x^*, z^*, \mu^*, \lambda^*))$, with $\epsilon_2$ defined in Lemma~\ref{lemma:nlp_sensitivity}, we have:
    \begin{equation}\label{eq:upper_bound}
        \begin{aligned}
            &\sum_{t \in [T]} \big(f_t(x_t^k) - d_t^k x_t^k \big) + \frac{\theta}{2}\norm{z^k}^2 - (d^k_z)^\top z^k
            +(\lambda^k)^\top p^k - \frac{\rho}{4}\norm{p^k}^2 \\
            &\;\;\leq
            \sum_{t \in [T]} \big(f_t(x_t^*) - d_t^k x_t^* \big) + \frac{\theta}{2}\norm{z^*}^2 - (d^k_z)^\top z^*.
        \end{aligned}
    \end{equation}
\end{lemma}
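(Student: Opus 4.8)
The plan is to replay the proof of Lemma~\ref{lemma:lower_bound} with the roles of the nominal solution and the iterate interchanged: there, $(x^*,z^*)$ was seen to locally minimize the augmented Lagrangian of $S(0,0)$; here I would use that the iterate $(x^k,z^k)$ locally minimizes the augmented Lagrangian of the \emph{perturbed} problem $S(p^k,d^k)$. Take $\epsilon_4\le\epsilon_1$, so that together with the hypothesis $(x^k,z^k,\mu^k,\lambda^k)\in B_{\epsilon_2}((x^*,z^*,\mu^*,\lambda^*))$, Lemma~\ref{lemma:iterates_and_nlp_perturbed} identifies $(x^k,z^k,\mu^k,\lambda^k)$ with the sensitivity solution $(\hat{x},\hat{z},\hat{\mu},\hat{\lambda})(p^k,d^k)$ of Lemma~\ref{lemma:nlp_sensitivity}. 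Define the quarter-penalty augmented Lagrangian of $S(p^k,d^k)$ at these multipliers, $\mathcal{S}^k_\rho(x,z) = \ell^k(x,z) + \tfrac{\rho}{4}\big[\sum_{t\in[T]}\norm{c_t(x_t)}^2 + \norm{Ax+z-b-p^k}^2\big]$, where $\ell^k(x,z) = \sum_{t\in[T]}\big[f_t(x_t)-(d_t^k)^\top x_t+(\mu_t^k)^\top c_t(x_t)\big]+\tfrac{\theta}{2}\norm{z}^2-(d_z^k)^\top z+(\lambda^k)^\top(Ax+z-b-p^k)$ is the Lagrangian of $S(p^k,d^k)$. By~\eqref{eq:perturbation:primal} and $c_t(x_t^k)=0$ the bracketed penalty vanishes at $(x^k,z^k)$, so $\mathcal{S}^k_\rho(x^k,z^k)=\ell^k(x^k,z^k)$ is independent of $\rho$; the first-order conditions of $S(p^k,d^k)$ established in the proof of Lemma~\ref{lemma:iterates_and_nlp_perturbed} give $\nabla\mathcal{S}^k_\rho(x^k,z^k)=\nabla\ell^k(x^k,z^k)=0$, while (again using $c_t(x_t^k)=0$) $\nabla^2\mathcal{S}^k_\rho(x^k,z^k)$ has exactly the block form appearing in the proof of Lemma~\ref{lemma:lower_bound}, now evaluated at $(x^k,\mu^k,\lambda^k)$ and with $\nabla^2_{xx}\Lambda_\theta=\nabla^2_{xx}\Lambda$.

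Next I would transport the second-order estimate of Lemma~\ref{lemma:lower_bound} from the nominal point to nearby perturbed points. Fix $\rho_2>\rho_1$, with $\rho_1,\epsilon_3'$ the constants from that proof for which $\nabla^2\mathcal{S}(x^*,z^*)\succeq\epsilon_3'\mathrm{I}$ whenever $\rho>\rho_1$; write $\mathcal{S}_{\rho_2}$ for $\mathcal{S}$ with $\rho=\rho_2$. Since $\hat{x},\hat{z},\hat{\mu},\hat{\lambda}$ are $C^1$, the family of matrices $\nabla^2\mathcal{S}^{(p,d)}_{\rho_2}(x,z)$ — the same construction with $(p^k,d^k)$, $(\mu^k,\lambda^k)$ replaced by $(p,d)$, $(\hat{\mu},\hat{\lambda})(p,d)$ — is jointly continuous in $(x,z,p,d)$ and equals $\nabla^2\mathcal{S}_{\rho_2}(x^*,z^*)\succeq\epsilon_3'\mathrm{I}$ when $(p,d)=0$ and $(x,z)=(x^*,z^*)$. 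Shrinking $\epsilon_4$ makes it $\succeq\tfrac{\epsilon_3'}{2}\mathrm{I}$ at $(\hat{x},\hat{z})(p,d)$ for all $(p,d)\in B_{\epsilon_4}(0)$, and uniform continuity over the compact parameter set $\overline{B_{\epsilon_4}(0)}$ then yields a radius $\epsilon_5>0$ (shrinking $\epsilon_4$ once more so that the Lipschitz modulus of $(\hat{x},\hat{z})$ times $\epsilon_4$ is below $\epsilon_5$) such that $\mathcal{S}^k_{\rho_2}$ is $\tfrac{\epsilon_3'}{4}$-strongly convex on $B_{\epsilon_5}((x^k,z^k))$ and this ball contains $(x^*,z^*)$. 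As $(x^k,z^k)$ is a critical point of $\mathcal{S}^k_{\rho_2}$, it is its unique minimizer on that ball, so $\mathcal{S}^k_{\rho_2}(x^k,z^k)\le\mathcal{S}^k_{\rho_2}(x^*,z^*)$.

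To finish, as in Lemma~\ref{lemma:lower_bound} I would lift this to every $\rho>\rho_2$ by nonnegativity of the penalty: $\mathcal{S}^k_\rho(x^k,z^k)=\ell^k(x^k,z^k)=\mathcal{S}^k_{\rho_2}(x^k,z^k)\le\mathcal{S}^k_{\rho_2}(x^*,z^*)\le\mathcal{S}^k_\rho(x^*,z^*)$, the last inequality because the penalty at $(x^*,z^*)$ equals $\norm{p^k}^2\ge0$ and $\rho>\rho_2$. Expanding $\mathcal{S}^k_\rho$ at $(x^k,z^k)$ and at $(x^*,z^*)$ — using $c_t(x_t^k)=c_t(x_t^*)=0$, the identity $Ax^k+z^k-b=p^k$ from~\eqref{eq:perturbation:primal}, and $Ax^*+z^*-b=0$ — and cancelling the common summand $-(\lambda^k)^\top p^k+\tfrac{\rho}{4}\norm{p^k}^2$ from both sides gives precisely~\eqref{eq:upper_bound}, with $\rho_2$ and the final $\epsilon_4$ as the asserted constants. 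The main obstacle is the bookkeeping in the middle step: chaining $\rho_1<\rho_2$, $\epsilon_4\le\epsilon_1$ and $\epsilon_5$, and verifying — via the $C^1$ sensitivity of Lemma~\ref{lemma:nlp_sensitivity} and uniform continuity on a compact parameter set — that the strong-convexity radius $\epsilon_5$ and the inclusion $(x^*,z^*)\in B_{\epsilon_5}((x^k,z^k))$ are uniform in $k$; note that, exactly as in Lemma~\ref{lemma:lower_bound}, strong convexity need only be secured at the single value $\rho=\rho_2$, since larger $\rho$ only adds a nonnegative penalty. Conceptually nothing beyond Lemma~\ref{lemma:lower_bound} is required; the only differences are that the quadratic growth is centered at the iterate rather than at $(x^*,z^*)$, and that $d^k$ enters $\ell^k$ only linearly and hence never touches a Hessian.
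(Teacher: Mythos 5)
Your argument is correct, but it takes a genuinely different route from the paper. The paper stays in the perturbation space: it introduces the primal (value) function $\mathcal{Q}(p,d)$ of \ref{eq:nlp_perturbed}, uses sensitivity to get $\nabla_p \mathcal{Q}(p,d)=-\hat{\lambda}(p,d)$ (Bertsekas, Prop.~1.28), invokes \cite[Lemma~3.2.1]{bertsekas1999nonlinear} to make $p\mapsto \mathcal{Q}(p,d)+\tfrac{\rho}{4}\norm{p}^2$ convex near $(0,0)$ for $\rho>\rho_2$, and then applies the gradient inequality between $p=p^k$ and $p=0$, bounding $\mathcal{Q}(0,d^k)$ by feasibility of $(x^*,z^*)$ in $S(0,d^k)$; Lemma~\ref{lemma:iterates_and_nlp_perturbed} identifies $\mathcal{Q}(p^k,d^k)$ and $\hat\lambda(p^k,d^k)$ with the iterate quantities, exactly as in your proof. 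You instead work in the full primal space $(x,z)$, mirroring Lemma~\ref{lemma:lower_bound}: you build the partially augmented Lagrangian of $S(p^k,d^k)$ at the multipliers $(\mu^k,\lambda^k)$, verify via the KKT system of Lemma~\ref{lemma:iterates_and_nlp_perturbed} that $(x^k,z^k)$ is a critical point with vanishing penalty, obtain uniform (in $k$) local strong convexity at $\rho=\rho_2$ by continuity of the Hessian along the $C^1$ sensitivity maps, conclude $\mathcal{S}^k_{\rho_2}(x^k,z^k)\le\mathcal{S}^k_{\rho_2}(x^*,z^*)$, and lift to all $\rho>\rho_2$ by monotonicity of the penalty; the final expansion does yield \eqref{eq:upper_bound} (your phrase about ``cancelling from both sides'' is loose—the term $-(\lambda^k)^\top p^k+\tfrac{\rho}{4}\norm{p^k}^2$ arises only from the expansion at $(x^*,z^*)$ and is simply moved across—but the algebra is right). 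The trade-off: the paper's route convexifies in the low-dimensional variable $p\in\mathbb{R}^m$ and gets uniformity in $d$ essentially for free from continuity of $\nabla^2_{pp}\mathcal{Q}$, at the price of invoking differentiability of the primal value function; your route avoids that machinery and makes Lemmas~\ref{lemma:lower_bound} and~\ref{lemma:upper_bound} symmetric, at the price of the extra bookkeeping you flag—securing a strong-convexity radius around the moving centers $(x^k,z^k)$, uniform over $(p,d)\in\overline{B_{\epsilon_4}(0)}$, together with the inclusion $(x^*,z^*)\in B_{\epsilon_5}((x^k,z^k))$—which your continuity/compactness argument handles at the same level of rigor as the paper's own proof of Lemma~\ref{lemma:lower_bound}.
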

\begin{proof}
    Define the primal functional corresponding to \ref{eq:nlp_perturbed}, as follows $\mathcal{Q} : \mathbb{R}^m \times \mathbb{R}^{n+m} \mapsto \mathbb{R}$, where $\mathcal{Q}(p, d)$ is given by:
    \begin{equation*}%
        \begin{aligned}
            \sum_{t = 1}^T \left[ f_t(\hat{x}_t(p, d)) - d_t^\top \hat{x}_t(p, d) \right] + \frac{\theta}{2} \norm{\hat{z}(p, d)}^2 - d_z^\top \hat{z}(p, d) \; .
        \end{aligned}
    \end{equation*}
    Now \cite[Proposition~1.28]{bertsekas2014constrained} implies that $\nabla_p Q(p, d) = -\hat{\lambda}(p, d)$, which is continuously differentiable from Lemma~\ref{lemma:nlp_sensitivity}. Therefore, $\nabla^2_{pp} Q$ is continuous and \cite[Lemma~3.2.1]{bertsekas1999nonlinear} ensures the existence of $\rho_2 > 0$ such that $\nabla^2_{pp} Q(0, 0) + \frac{\rho}{2} \mathrm{I}$ is positive definite {and that its minimum eigenvalue is bounded strictly away from $0$} for all $\rho > \rho_2$.
    By a similar argument as in the proof of Lemma~\ref{lemma:lower_bound},
    continuity of $\nabla^2_{pp} Q$ implies that there exists $\epsilon_4' > 0$ (independent of $\rho$) such that $\nabla^2_{pp} Q(p, d) + \frac{\rho}{2} \mathrm{I}$ is positive definite, and hence the function $F_d: p \mapsto Q(p, d) + \frac{\rho}{4} \norm{p}^2$ is convex, whenever $(p, d) \in B_{\epsilon_4'}(p, d)$. %
    Now fix $\epsilon_4 = \min\{\epsilon_4', \epsilon_1\} > 0$ and $d = d^k$.
    Convexity of $F_{d_k}$ implies:
    \[
    F_{d_k}(p^k) + (0 - p^k)^\top\nabla F_{d_k}(p^k) \leq F_{d_k}(0).
    \]
    The statement of the lemma now follows by
    \emph{(i)} substituting
    $F_{d_k}(p^k) = Q(p^k, d^k) + \frac{\rho}{4} \norm{p^k}^2$, replacing $Q(p^k, d^k)$ using its definition,
    and noting $(\hat{x}(p^k, d^k), \hat{z}(p^k, d^k)) = (x^k, z^k)$ from Lemma~\ref{lemma:iterates_and_nlp_perturbed};
    \emph{(ii)} substituting
    $\nabla F_{d_k}(p^k) = -\hat{\lambda}(p^k, d^k) + \frac{\rho}{2} p^k = -\lambda^k + \frac{\rho}{2} p^k$ (from Lemma~\ref{lemma:iterates_and_nlp_perturbed});
    and,
    \emph{(iii)} $F_{d_k}(0) = Q(0, d^k)$ is less than or equal to the right-hand side of \eqref{eq:upper_bound},
    since $(x^*, z^*)$ is feasible (but possibly suboptimal) in $S(0, d^k)$.
\end{proof}

\begin{lemma}\label{lemma:subproblem_sensitivity}
    There exist $\rho_3, \epsilon_5 > 0$ such that for all $\rho > \rho_3$ and $k \geq 1$ where $(Dx^{k-1}, z^{k-1}, \lambda^{k-1}) \in B_{\epsilon_5}((Dx^*, z^*, \lambda^*))$, we have
    $ %
    (x^k, z^k, \mu^k, \lambda^k) \in B_{\min\{\epsilon_2, \epsilon_3\}}((x^*, z^*, \mu^*, \lambda^*)),
    $ %
    where $\epsilon_2, \epsilon_3$ are defined in Lemma~\ref{lemma:nlp_sensitivity} and~\ref{lemma:lower_bound}.
\end{lemma}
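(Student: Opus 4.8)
The plan is to view one sweep of Algorithm~\ref{algo:main} as a map $\Theta$ sending the ``input data'' $\xi \coloneqq (Dx^{k-1}, z^{k-1}, \lambda^{k-1})$ --- the only quantities on which the $x$-, $z$- and $\lambda$-updates actually depend (note that \eqref{eq:x-update} involves $x^{k-1}$ only through $A_t x_t^{k-1}$ and $A_{\neq t}x_{\neq t}^{k-1}$, i.e.\ through $Dx^{k-1}$) --- to the ``output'' $(x^k, z^k, \mu^k, \lambda^k)$, and to show that $\Theta$ is well defined and continuous on a neighborhood of $\xi^* \coloneqq (Dx^*, z^*, \lambda^*)$, with $\Theta(\xi^*) = (x^*, z^*, \mu^*, \lambda^*)$. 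Granting this, the lemma follows at once: by continuity there is $\epsilon_5 > 0$ with $\Theta(B_{\epsilon_5}(\xi^*)) \subseteq B_{\min\{\epsilon_2,\epsilon_3\}}((x^*, z^*, \mu^*, \lambda^*))$, and $\rho_3$ is taken at least as large as $\rho_1$ (so that $\epsilon_3$ of Lemma~\ref{lemma:lower_bound} is available) and large enough for the estimates below; recall $\rho > \theta$ already holds under \eqref{eq:eta_definition} (cf.\ the proof of Proposition~\ref{prop:primal_residual_bound}).

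I would first analyze the $x$-updates. Fix $t \in [T]$. Problem~\eqref{eq:x-update} is a $C^2$ parametric NLP whose objective and constraint data depend affinely on $\xi$. At $\xi = \xi^*$, primal feasibility $Ax^* + z^* = b$ and $A_t x_t^{k-1} = A_t x_t^*$ make both the penalty gradient and the proximal gradient vanish at $x_t = x_t^*$, so the first-order condition of~\eqref{eq:x-update} at $x_t^*$ with multiplier $\mu_t^*$ reduces to the stationarity equation $\nabla f_t(x_t^*) + \nabla c_t(x_t^*)\mu_t^* + A_t^\top\lambda^* = 0$ of the KKT system of~\eqref{eq:nlp}; hence $(x_t^*, \mu_t^*)$ is a KKT pair of~\eqref{eq:x-update} at $\xi^*$. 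Assumption~\ref{assume:licq} makes $\nabla c_t(x_t^*)^\top$ --- a diagonal block of the full-row-rank matrix $\nabla c(x^*)^\top$ --- of full row rank, so LICQ holds. For the second-order condition, the reduced Hessian of the subproblem's Lagrangian at $x_t^*$ along $\{w_t : \nabla c_t(x_t^*)^\top w_t = 0\}$ equals $\nabla^2[f_t + (\mu_t^*)^\top c_t](x_t^*) + (\rho + \tau_x)A_t^\top A_t$; I would write this as $\big(\nabla^2[f_t + (\mu_t^*)^\top c_t](x_t^*) + \theta A_t^\top A_t\big) + (\rho + \tau_x - \theta)A_t^\top A_t$, observe that the first bracket is positive definite on that subspace because Assumption~\ref{assume:sosc}, applied to directions supported on block $t$ (with $w_z = -A_t w_t$), reads exactly $w_t^\top\nabla^2[f_t + (\mu_t^*)^\top c_t](x_t^*)w_t + \theta\norm{A_t w_t}^2 > 0$, and note that the second summand is positive semidefinite since $\rho + \tau_x \ge \theta$. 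This gives SOSC for~\eqref{eq:x-update} at $x_t^*$ with a curvature modulus that is \emph{independent of $\rho$}. Fiacco's sensitivity theorem \cite[Theorem~2.1]{fiacco1976sensitivity} then supplies a locally unique $C^1$ (hence continuous) branch $\xi \mapsto (\tilde x_t(\xi), \tilde\mu_t(\xi))$ of local minimizers and multipliers of~\eqref{eq:x-update} through $(x_t^*, \mu_t^*)$.

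Next I would argue that this branch is exactly the minimizer prescribed by Assumption~\ref{assume:subproblem:regular}. If $A_t$ has full column rank the proximal term is strongly convex, so for $\xi$ near $\xi^*$ the branch point is the unique critical point, a fortiori the unique local minimizer, in a fixed ball around $x_t^*$, hence the one closest to $x_t^{k-1}$; if $A_t$ is rank deficient the branch point approaches $x_t^*$ as $\xi \to \xi^*$, while, by isolatedness of $x_t^*$ as a stationary point of~\eqref{eq:x-update} at $\xi^*$ together with the implicit function theorem, every other stationary point of~\eqref{eq:x-update} stays outside a fixed neighborhood of $x_t^*$, so the branch point is the one closest to $x_t^*$. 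In either case $(x_t^k, \mu_t^k) = (\tilde x_t(\xi), \tilde\mu_t(\xi))$, whence $x^k$, $\mu^k$, and $Ax^k = \sum_{t}A_t x_t^k$ are continuous in $\xi$ near $\xi^*$ and equal $x^*$, $\mu^*$, $Ax^*$ at $\xi = \xi^*$. Finally, \eqref{eq:z-update}--\eqref{eq:lambda-update} give $z^k$ and $\lambda^k$ as explicit affine functions of $(Ax^k, z^{k-1}, \lambda^{k-1})$, hence continuous in $\xi$; at $\xi = \xi^*$ one has $Ax^k - b = -z^*$ and $\lambda^{k-1} = \lambda^* = -\theta z^*$, so \eqref{eq:z-update} yields $z^k = z^*$ and then \eqref{eq:lambda-update} yields $\lambda^k = \lambda^*$. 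Composing shows $\Theta$ is continuous near $\xi^*$ with $\Theta(\xi^*) = (x^*, z^*, \mu^*, \lambda^*)$, as required.

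I expect the main obstacle to be the bookkeeping that makes $\epsilon_5$ --- equivalently, the continuity modulus of $\Theta$ at $\xi^*$ --- independent of $\rho$, which the statement demands (the quantifiers are ``$\exists\,\rho_3,\epsilon_5$ such that for all $\rho > \rho_3$''). This requires checking that the $\rho$-factors entering the data-Jacobian of the subproblem KKT system are compensated by the $\rho A_t^\top A_t$ block in its Hessian --- the same cancellation that kept $\epsilon_3$ in Lemma~\ref{lemma:lower_bound} $\rho$-independent --- and using the uniform SOSC modulus obtained above together with $\tau_z/(\tau_z+\rho+\theta) \le 1$, $\rho/(\tau_z+\rho+\theta)\le 1$ to control the $z$- and $\lambda$-updates. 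The second delicate point is the rank-deficient case of Assumption~\ref{assume:subproblem:regular}: there one cannot appeal to strong convexity of the proximal term and must instead certify, via isolatedness of $x_t^*$ and continuity of the stationary set of~\eqref{eq:x-update}, that the ``closest to $x_t^*$'' tie-break selects precisely the sensitivity branch.
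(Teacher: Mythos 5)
Your proposal is correct and follows the same overall route as the paper's proof: treat one sweep of Algorithm~\ref{algo:main} as a map of $(Dx^{k-1},z^{k-1},\lambda^{k-1})$, check that at $(Dx^*,z^*,\lambda^*)$ the pair $(x_t^*,\mu_t^*)$ is a KKT point of subproblem~\eqref{eq:x-update} satisfying LICQ and SOSC, invoke Fiacco's sensitivity theorem together with Assumption~\ref{assume:subproblem:regular} to get continuity of the $x$-update, and finish with the explicit continuity of the $z$- and $\lambda$-updates using $Ax^*+z^*=b$ and $\lambda^*=-\theta z^*$. The one place you genuinely diverge is the SOSC verification: the paper applies \cite[Lemma~3.2.1]{bertsekas1999nonlinear} to the full $(x,z)$ Hessian augmented by $\rho$ times the constraint Gramian, which is where its threshold $\rho_3$ comes from, whereas you restrict Assumption~\ref{assume:sosc} to directions supported on block $t$ with $w_z=-A_tw_t$ and absorb the leftover term $(\rho+\tau_x-\theta)A_t^\top A_t\succeq 0$; this is more elementary, needs only $\rho+\tau_x\ge\theta$ (automatic under \eqref{eq:eta_definition}), and yields a curvature modulus independent of $\rho$, so in your version $\rho_3$ is needed only to inherit $\epsilon_3$ from Lemma~\ref{lemma:lower_bound}. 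You also spell out why the tie-break in Assumption~\ref{assume:subproblem:regular} selects exactly the sensitivity branch (the paper simply invokes the assumption at this point), and you flag the requirement that $\epsilon_5$ be uniform in $\rho$ over $\rho>\rho_3$, a point the paper's proof does not discuss; your $\rho$-independent SOSC modulus is precisely the ingredient that makes such uniformity plausible, so this is a well-placed refinement rather than a gap in your argument.
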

\begin{proof}
    Observe that for fixed choices of $\rho, \theta, \tau_x, \tau_z$ in Algorithm~\ref{algo:main}, the $x$-update step \eqref{eq:x-update} at iteration $k$ depends only on the values of $Dx^{k-1}$, $z^{k-1}$ and $\lambda^{k-1}$ at the previous iteration;
    therefore, we can define
    $G : (Dx^{k-1}, z^{k-1}, \lambda^{k-1}) \mapsto (x^k, \mu^k)$
    to be the corresponding mapping.
    Observe now that the claim follows trivially if we can show that
    $G$ is continuous and satisfies
    $G(Dx^*, z^*, \lambda^*) = (x^*, \mu^*)$.
    Indeed, if this is true, then observe that the $z$-update \eqref{eq:z-update} and $\lambda$-update \eqref{eq:lambda-update} formulas also have the same properties: they define continuous maps and satisfy $z^k = z^*$ and $\lambda^k = \lambda^*$ whenever $x^k = x^*$, $z^{k-1} = z^*$ and $\lambda^{k-1} = \lambda^*$, since $Ax^* + z^* = b$ and $\lambda^* = -\theta z^*$ (from the KKT conditions of problem~\eqref{eq:nlp}).
    
    Now fix $t \in [T]$ and $(Dx^{k-1}, z^{k-1}, \lambda^{k-1}) = (Dx^*, z^*, \lambda^*)$.
    Observe that problem~\eqref{eq:x-update} satisfies the LICQ assumption at $x_t^*$, because of~\ref{assume:licq}.
    Moreover, since $Ax^* + z^* = b$, observe that $(x_t^*, \mu_t^*)$ is also a first-order stationary point of problem~\eqref{eq:x-update}:
    \begin{align*}
        \left(\begin{aligned}
            \nabla f_t(x_t^*) + \nabla c_t(x_t^*) \mu_t^* + A_t^\top \lambda^{*} + \tau_x A_t^\top A_t (x_t^* - x_t^*) \\
            + \rho A_t^\top \left[ A_tx_t^* + A_{\neq t} x_{\neq t}^{*} + z^{*} - b \right]
        \end{aligned}\right) = 0.
    \end{align*}
    To show that it satisfies SOSC, note that Assumption~\ref{assume:sosc} and \cite[Lemma~3.2.1]{bertsekas1999nonlinear} ensure the existence of $\rho_3 > 0$ such that
    \[
    \begin{bmatrix}
        \nabla^2_{xx} \Lambda(x^*, z^*, \mu^*, \lambda^*) & 0 \\
        0 & \theta \mathrm{I}
    \end{bmatrix}
    + \rho
    \begin{bmatrix}
        A^\top A & A^\top \\
        A & \mathrm{I}
    \end{bmatrix}.
    \]
    is positive definite on the domain $\{(w_x, w_z) \in \mathbb{R}^n \times \mathbb{R}^m \setminus \{0\}: \nabla c(x^*)^\top w_x = 0\}$ for all $\rho > \rho_3$.
    This means that for $w_{x_t} \neq 0$, $\nabla c_t(x_t^*)^\top w_{x_t} = 0$, $w_z = 0$, and $w_{x_s} = 0$ for $s \neq t$, we have:
    \[
    w_{x_t}^\top
    \left[
    \nabla^2_{x_t x_t} \Lambda(x^*, z^*, \mu^*, \lambda^*)
    + (\rho + \tau_x) A_t^\top A_t
    \right] w_{x_t} > 0,
    \]
    which is precisely the SOSC condition for problem~\eqref{eq:x-update} at $(x_t^*, \mu_t^*)$, and therefore, $(x^*, \mu^*)$ is a strict local solution.
    
    Continuity of the mapping $G$ now follows directly from Assumption~\ref{assume:subproblem:regular} and classical NLP sensitivity~\cite[Theorem~2.1]{fiacco1976sensitivity} applied to problem~\eqref{eq:x-update} with $Dx_t^{k-1}$, $z^{k-1}$ and $\lambda^{k-1}$ viewed as perturbation parameters.
\end{proof}

\begin{lemma}\label{lemma:convergence_of_residuals}
    The sequences $\{p^k\}$ and $\{d^k\}$ converge to $0$.
\end{lemma}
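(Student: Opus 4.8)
The plan is to reuse the descent machinery already in place. First I would observe that since $\eta_x,\eta_z>0$, Lemma~\ref{lemma:lyapunov_descent} shows that $\{\Phi^k\}$ is non-increasing, while Lemma~\ref{lemma:lyapunov_lower_bound} shows it is bounded below by $\hat{\Phi}$; hence $\{\Phi^k\}$ converges and $\Delta\Phi^k\to 0$. Plugging this back into the descent estimate \eqref{eq:delta_phi} and using $\eta_x,\eta_z>0$ gives
\[
\eta_x\sum_{t=1}^T\norm{\Delta x_t^k}_{A_t^\top A_t}^2 + \eta_z\norm{\Delta z^k}^2 \;\leq\; -\Delta\Phi^k \;\longrightarrow\; 0,
\]
so that $\norm{A_t\Delta x_t^k}\to 0$ for every $t\in[T]$ (recall $\norm{A_t\Delta x_t^k}^2=\norm{\Delta x_t^k}_{A_t^\top A_t}^2$) and $\norm{\Delta z^k}\to 0$; in particular $\norm{\Delta z^{k-1}}\to 0$ as well.

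Next I would estimate the three pieces of $d_t^k$ from \eqref{eq:perturbation}. By the Cauchy--Schwarz and triangle inequalities, $\norm{A_t^\top A_{\neq t}\Delta x_{\neq t}^k}\leq \norm{A_t}\sum_{s\neq t}\norm{A_s\Delta x_s^k}$, $\norm{A_t^\top\Delta z^k}\leq\norm{A_t}\norm{\Delta z^k}$, and $\norm{A_t^\top A_t\Delta x_t^k}\leq\norm{A_t}\norm{A_t\Delta x_t^k}$, and each right-hand side tends to $0$ by the previous paragraph; hence $d_t^k\to 0$ for all $t\in[T]$. Likewise $d_z^k=-\tau_z\Delta z^k\to 0$, so $d^k\to 0$.

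Finally, for $p^k$ I would use the $\lambda$-update \eqref{eq:lambda-update}, which gives $\rho\,p^k=\Delta\lambda^k$, together with the identity \eqref{eq:delta_lambda}, namely $\Delta\lambda^k=-(\theta+\tau_z)\Delta z^k+\tau_z\Delta z^{k-1}$. Since both $\Delta z^k$ and $\Delta z^{k-1}$ tend to $0$, we get $\Delta\lambda^k\to 0$ and therefore $p^k=\rho^{-1}\Delta\lambda^k\to 0$, completing the proof.

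There is essentially no obstacle here: the result is a direct corollary of Lemmas~\ref{lemma:lyapunov_descent} and~\ref{lemma:lyapunov_lower_bound}. The only points requiring a moment of care are that it is \emph{convergence} of $\{\Phi^k\}$ (monotonicity together with the lower bound), not mere boundedness, that forces $\Delta\Phi^k\to 0$, and that one needs the entire tail of $\{\Delta z^k\}$ (hence both $\Delta z^k$ and $\Delta z^{k-1}$) to vanish before concluding $p^k\to 0$ via \eqref{eq:delta_lambda}.
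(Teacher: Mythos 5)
Your proof is correct and follows essentially the same route as the paper: both use the Lyapunov descent of Lemma~\ref{lemma:lyapunov_descent} together with the lower bound of Lemma~\ref{lemma:lyapunov_lower_bound} to force $\Delta z^k\to 0$ and $A_t\Delta x_t^k\to 0$, and then read off $d^k\to 0$ from \eqref{eq:perturbation} and $p^k=\rho^{-1}\Delta\lambda^k\to 0$ from \eqref{eq:lambda-update} and \eqref{eq:delta_lambda}. The only cosmetic difference is that the paper sums the descent inequality over $k$ and telescopes, whereas you invoke convergence of the monotone bounded sequence $\{\Phi^k\}$; the two are interchangeable here.
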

\begin{proof}
    After summing inequality~\eqref{eq:lemma_lyapunov_descent_delta_phi} over $k \in [K]$ and noting $\Phi^K \geq \hat{\Phi}$ from Lemma~\ref{lemma:lyapunov_lower_bound}, we obtain that the sequences $\{\Delta z^k\}$ and $\{A_t \Delta x_t^k\}$ for $t \in [T]$ must all converge to $0$.
    Therefore, the definitions \eqref{eq:perturbation:dual_xt} and \eqref{eq:perturbation:dual_z} imply that $\{d^k\}$ also converges to $0$.
    Equations \eqref{eq:lambda-update} and \eqref{eq:delta_lambda} imply
    $p^k = \frac{1}{\rho}\Delta \lambda^k = -\frac{\theta + \tau_z}{\rho} \Delta z^k + \frac{\tau_z}{\rho} \Delta z^{k-1}$,
    which means that the sequence $\{p^k\}$ must also converge to $0$.
\end{proof}

\begin{lemma}\label{lemma:intermediate}
    \begin{enumerate}
        \item Any two vectors $a_1, a_2$ of equal dimension satisfy
        $
        2 a_1^\top a_2 = \norm{a_1 + a_2}^2 - \norm{a_1}^2 - \norm{a_2}^2
        $.
        \item For any $w = (w_1, \ldots, w_T) \in \mathbb{R}^{n_1} \times \ldots \times  \mathbb{R}^{n_T}$, we have:
        \begin{equation*}%
            \begin{aligned}
                &(\rho+\tau_x) \sum_{t = 1}^T \norm{w_t}^2_{A_t^\top A_t} - \rho \norm{w}^2_{A^\top A} = \norm{Dw}^2_R,  \\
                &R \coloneqq (\rho + \tau_x) \mathrm{I} - \rho E E^\top  \succ 0, \\
                & E^\top \coloneqq \begin{bmatrix}
                    \mathrm{I}_{m \times m} & \ldots & \mathrm{I}_{m\times m}
                \end{bmatrix} \in \mathbb{R}^{m \times Tm}.
            \end{aligned}
        \end{equation*}
    \end{enumerate}
\end{lemma}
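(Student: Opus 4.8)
Part~1 is the polarization identity: expanding $\norm{a_1+a_2}^2 = \norm{a_1}^2 + 2a_1^\top a_2 + \norm{a_2}^2$ and rearranging yields the claim immediately, so I would dispatch it in a single line.

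For Part~2, the plan is to route both terms on the left-hand side through the block-diagonal matrix $D = \mathrm{diag}(A_1,\dots,A_T)$, recalling $Dw = (A_1 w_1,\dots,A_T w_T) \in \mathbb{R}^{Tm}$. First, since $\norm{w_t}^2_{A_t^\top A_t} = \norm{A_t w_t}^2$, summing over $t$ gives $\sum_{t=1}^T \norm{w_t}^2_{A_t^\top A_t} = \norm{Dw}^2$. Second, I would note that left-multiplication by $E^\top$ --- the horizontal concatenation of $T$ copies of $\mathrm{I}_{m\times m}$ --- simply adds the $m$-dimensional blocks, so $E^\top(Dw) = \sum_{t=1}^T A_t w_t = Aw$, and therefore $\norm{w}^2_{A^\top A} = \norm{Aw}^2 = (E^\top Dw)^\top(E^\top Dw) = (Dw)^\top E E^\top (Dw)$. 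Substituting these two identities, the left-hand side of the claim equals $(Dw)^\top\big[(\rho+\tau_x)\mathrm{I} - \rho E E^\top\big](Dw) = \norm{Dw}^2_R$, which is exactly the asserted identity.

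It then remains to establish $R \succ 0$. For any $v = (v_1,\dots,v_T) \in \mathbb{R}^{Tm}$, Cauchy--Schwarz gives $v^\top E E^\top v = \norm{\sum_{t=1}^T v_t}^2 \le T\sum_{t=1}^T \norm{v_t}^2 = T\norm{v}^2$, so $E E^\top \preceq T\mathrm{I}$ and hence $R \succeq \big(\tau_x - (T-1)\rho\big)\mathrm{I}$. The standing hypothesis \eqref{eq:eta_definition} requires $\eta_x > 0$, i.e.\ $\tau_x > 2(T-1)\rho \ge (T-1)\rho$, so the right-hand side is positive definite; thus $R \succ 0$ and $\norm{\cdot}_R$ is a bona fide norm on $\mathbb{R}^{Tm}$.

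I do not anticipate a genuine obstacle here: this is a bookkeeping lemma whose purpose is to rewrite the quadratic forms that appear in the later descent and sensitivity estimates. The only points deserving a moment of care are reading off the identity $Aw = E^\top Dw$ correctly from the definitions of $A$, $D$, and $E$, and ensuring the positive definiteness of $R$ is deduced from the assumption $\eta_x > 0$ rather than merely postulated; both are routine.
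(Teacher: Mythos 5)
Your proposal is correct and follows essentially the same route as the paper: Part~1 by expansion, and Part~2 by rewriting both quadratic forms through $D$ via $\sum_t \norm{w_t}^2_{A_t^\top A_t} = \norm{Dw}^2$ and $A = E^\top D$, then checking $R \succ 0$ under $\eta_x > 0$. The only (immaterial) difference is in that last step: the paper computes the exact eigenvalues of $E E^\top$ from its Kronecker structure $\mathrm{e}\,\mathrm{e}^\top \otimes \mathrm{I}_{m\times m}$, while you bound $E E^\top \preceq T\mathrm{I}$ by Cauchy--Schwarz, which yields the same (tight) lower bound $R \succeq (\tau_x - (T-1)\rho)\mathrm{I} \succ 0$.
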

\begin{proof}
    The first part follows by expanding its right-hand side.
    To prove the second part, note that $D = \mathop{\mathrm{diag}}(A_1, \ldots, A_T)$ can be used to verify that
    $
    \sum_{t = 1}^T \norm{w_t}^2_{A_t^\top A_t} = w^\top D^\top D w.
    $
    Similarly, the definition of $E$ can be used to verify that $A = E^\top D$ and hence, 
    $
    \norm{w}^2_{A^\top A} = w^\top D^\top E E^\top D w.
    $
    This proves the claimed equation.
    To show that $R \succ 0$, observe that $E\; E^\top$ can be equivalently expressed as the Kronecker product
    $\mathrm{e} \; \mathrm{e}^\top \otimes  \mathrm{I}_{m \times m}$,
    where $\mathrm{e}$ is the vector of ones in $\mathbb{R}^T$.
    Therefore, its eigenvalues are given by pairwise products of the eigenvalues of $\mathrm{e} \; \mathrm{e}^\top$ (which are $0$ and $T$)
    and $\mathrm{I}_{m \times m}$ (which is $1$).
    Therefore, the eigenvalues of $E \; E^\top$ are $0$ and $T$, and hence, those of $(\rho + \tau_x) \mathrm{I} - \rho E\;  E^\top$ are $\rho + \tau_x$ and $\rho + \tau_x - \rho T$, both of which are positive since $\eta_x > 0$, see \eqref{eq:eta_definition}.
\end{proof}

\begin{proposition}\label{prop:local_convergence}
    There exist constants $\bar{\rho}, \epsilon' > 0$ such that if $\rho > \bar{\rho}$ and
    $
    \norm{(Dx^0, z^0, \lambda^0) - (Dx^*, z^*, \lambda^*)} < \epsilon'
    $,
    then the sequence $\{(x^k, z^k, \lambda^k, \Delta z^k)\}$ converges to $(x^*, z^*, \lambda^*, 0)$ at a sublinear rate.
\end{proposition}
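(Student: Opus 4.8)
The plan is to convert the one-sided sandwich of Lemmas~\ref{lemma:lower_bound} and~\ref{lemma:upper_bound} into a two-sided quantitative bound on the distance of the iterate to $(x^*,z^*)$, and then let the residuals $p^k,d^k$ go to zero. The first step is to strengthen Lemma~\ref{lemma:lower_bound}: its proof already shows that $\nabla^2\mathcal{S}(x^*,z^*)$ is positive definite with smallest eigenvalue above a $\rho$-independent constant $\epsilon_3'>0$ and that $(x^*,z^*)$ is a local minimizer of $\mathcal{S}$. A second-order Taylor expansion -- using $c_t(x_t^k)=0=c_t(x_t^*)$, so that the $\tfrac{\rho}{4}\sum_t\norm{c_t(x_t)}^2$ terms contribute only a harmless second-order remainder and the third-order remainder admits a $\rho$-independent bound -- upgrades this to the quadratic growth
\[
  \mathcal{S}(x^k,z^k)-\mathcal{S}(x^*,z^*)\;\ge\;\tfrac{\epsilon_3'}{4}\,\norm{(x^k,z^k)-(x^*,z^*)}^2
\]
on a ball of $\rho$-independent radius. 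Rewriting the left-hand side as in the proof of Lemma~\ref{lemma:lower_bound} (using $c_t(x_t^k)=0$ and $Ax^k+z^k-b=p^k$) turns~\eqref{eq:lower_bound} into the same inequality with the extra term $\tfrac{\epsilon_3'}{4}\norm{(x^k,z^k)-(x^*,z^*)}^2$ on its right-hand side.

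Combining this sharpened lower bound with~\eqref{eq:upper_bound} -- both estimate $\sum_t f_t(x_t^k)+\tfrac{\theta}{2}\norm{z^k}^2$ -- and cancelling the common terms gives
\[
  \tfrac{\epsilon_3'}{4}\norm{(x^k,z^k)-(x^*,z^*)}^2\;\le\;\sum_t(d_t^k)^\top(x_t^k-x_t^*)+(d_z^k)^\top(z^k-z^*)+(\lambda^*-\lambda^k)^\top p^k+\tfrac{\rho}{2}\norm{p^k}^2 .
\]
I would bound the right-hand side with Cauchy--Schwarz and Young's inequality, using $\norm{\lambda^k-\lambda^*}\le\theta\norm{z^k-z^*}+\tau_z\norm{\Delta z^k}$ from Lemma~\ref{lemma:lambda_z_relation}; the identity $p^k=\rho^{-1}\Delta\lambda^k=-\tfrac{\theta+\tau_z}{\rho}\Delta z^k+\tfrac{\tau_z}{\rho}\Delta z^{k-1}$, so that $\norm{p^k}$ and $\tfrac{\rho}{2}\norm{p^k}^2$ are controlled by $\norm{\Delta z^k},\norm{\Delta z^{k-1}}$ with $\rho$-uniform constants (as $\tau_z/\rho=\kappa_z$ and $\theta/\rho<1/32$ under~\eqref{eq:eta_definition}); the identities of Lemma~\ref{lemma:intermediate} to absorb the $\rho$-weighted block cross terms in $d_t^k$; and, for a sharper estimate, the $C^1$ sensitivity maps of Lemmas~\ref{lemma:nlp_sensitivity}--\ref{lemma:iterates_and_nlp_perturbed}, which give $\norm{x^k-x^*},\norm{z^k-z^*},\norm{\lambda^k-\lambda^*}=O(\norm{p^k}+\norm{d^k})$ (with $\rho$-independent constants since $S(p,d)$ carries no $\rho$). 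Absorbing the $\tfrac{\epsilon_3'}{4}\norm{\cdot}^2$-proportional pieces into the left side, this yields $\norm{(x^k,z^k)-(x^*,z^*)}^2\le C_\rho\big(\norm{\Delta z^k}^2+\norm{\Delta z^{k-1}}^2+\sum_t\norm{\Delta x_t^k}^2_{A_t^\top A_t}\big)$ for a ($\rho$-dependent) constant $C_\rho$. Since Lemma~\ref{lemma:convergence_of_residuals} already gives $\Delta z^k\to 0$ and $\Delta x_t^k\to 0$ in the $A_t^\top A_t$ seminorm (equivalently $p^k,d^k\to 0$), it follows that $\norm{(x^k,z^k)-(x^*,z^*)}\to 0$, hence $x^k\to x^*$; then $\lambda^k=-\theta z^k-\tau_z\Delta z^k\to-\theta z^*=\lambda^*$ and $\Delta z^k\to 0$, establishing convergence of $(x^k,z^k,\lambda^k,\Delta z^k)$ to $(x^*,z^*,\lambda^*,0)$.

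The step I expect to be the main obstacle is \emph{invariance}: all of the above presumes the iterates never leave the neighborhoods in which Lemmas~\ref{lemma:nlp_sensitivity}--\ref{lemma:iterates_and_nlp_perturbed}, \ref{lemma:lower_bound}, \ref{lemma:upper_bound} and~\ref{lemma:subproblem_sensitivity} apply, and continuity of the iteration map together with its fixed point $(Dx^*,z^*,\lambda^*)$ does not by itself give invariance of a ball. I would close this by induction on $k$ with $\bar{\rho}\coloneqq\max\{\rho_1,\rho_2,\rho_3\}$ and $\epsilon'$ small: if $(Dx^{k-1},z^{k-1},\lambda^{k-1})\in B_{\epsilon_5}((Dx^*,z^*,\lambda^*))$, then Lemma~\ref{lemma:subproblem_sensitivity} places $(x^k,z^k,\mu^k,\lambda^k)$ in $B_{\min\{\epsilon_2,\epsilon_3\}}$ and keeps successive iterates close, so $p^k$ and $d^k$ are small enough to invoke the remaining lemmas, and the estimate above returns $\norm{(Dx^k,z^k,\lambda^k)-(Dx^*,z^*,\lambda^*)}$ of order $\sqrt{\norm{p^k}+\norm{d^k}}$, below $\epsilon_5$ once $\epsilon'$ is small -- closing the induction. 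The delicate bookkeeping is keeping the constants under control, notably the $\rho$-dependence of $\epsilon'$ (since $d_z^k=-\tau_z\Delta z^k$ carries a factor $\kappa_z\rho$), which leans on the scalings $\tau_x=\kappa_x\rho$, $\tau_z=\kappa_z\rho$ and on how the $\tfrac{\rho}{4}\norm{p^k}^2$ terms enter the two bounds. Finally, the sublinear rate follows by telescoping the Lyapunov descent: summing~\eqref{eq:delta_phi} over $k\in[K]$ gives $\min_{k\in[K]}\big(\norm{\Delta z^k}^2+\norm{\Delta z^{k-1}}^2+\sum_t\norm{\Delta x_t^k}^2_{A_t^\top A_t}\big)\le(\Phi^1-\hat{\Phi})/(K\min\{\eta_x,\eta_z\})$; since $\eta_x,\eta_z=\Theta(\rho)$, feeding this into the estimate above bounds $\min_{k\in[K]}\norm{(x^k,z^k,\lambda^k)-(x^*,z^*,\lambda^*)}$ by $O(K^{-1/2})$ for fixed $\rho$.
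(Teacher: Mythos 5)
Your scaffolding (the sensitivity maps of Lemmas~\ref{lemma:nlp_sensitivity}--\ref{lemma:iterates_and_nlp_perturbed}, the sandwich of Lemmas~\ref{lemma:lower_bound}--\ref{lemma:upper_bound}, Lemma~\ref{lemma:convergence_of_residuals}) is the right one, but the step you yourself single out as the main obstacle --- forward invariance of the neighborhood --- is not closed by your induction, and this is a genuine gap. You close the induction by claiming the quadratic-growth estimate returns $\norm{(Dx^k,z^k,\lambda^k)-(Dx^*,z^*,\lambda^*)}=O(\sqrt{\norm{p^k}+\norm{d^k}})$, ``below $\epsilon_5$ once $\epsilon'$ is small.'' But nothing in your argument makes $\norm{p^k}+\norm{d^k}$ small in proportion to $\epsilon'$: at a fixed iteration $k$, $p^k$ and $d^k$ are controlled by $\Delta z^k$, $\Delta z^{k-1}$ and $A_t\Delta x_t^k$, and inside your induction step the only quantitative control you have is that Lemma~\ref{lemma:subproblem_sensitivity} places $(x^k,z^k,\mu^k,\lambda^k)$ in a ball of the \emph{fixed} radius $\min\{\epsilon_2,\epsilon_3\}$, which may be far larger than $\epsilon_5$; Lemma~\ref{lemma:convergence_of_residuals} gives $p^k,d^k\to 0$ only asymptotically, and the summed Lyapunov descent bounds the residuals by $\Phi^1-\hat{\Phi}$, a quantity that does not shrink with $\epsilon'$. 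Feeding your own bound back into itself only produces inequalities of the form $\delta_k^2\le C_\rho(\delta_k^2+\delta_{k-1}^2)$ with a constant that need not be below one, so the iterates are never shown to remain in $B_{\epsilon_5}((Dx^*,z^*,\lambda^*))$, and the appeals to Lemmas~\ref{lemma:lower_bound}--\ref{lemma:upper_bound} at later iterations are unjustified.

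The paper's proof is engineered precisely to avoid this: rather than strengthening \eqref{eq:lower_bound} to quadratic growth and then applying Cauchy--Schwarz/Young (which destroys the structure), it rewrites $(d)+(e)+(f)\ge 0$ \emph{exactly}, using the polarization identity and the $R$-norm identity of Lemma~\ref{lemma:intermediate}, into the telescoping inequality \eqref{eq:local:contraction}: the quantity $\norm{(Dx^k,z^k,\lambda^k)-(Dx^*,z^*,\lambda^*)}_*^2+\tau_z\norm{\Delta z^k}^2$ is non-increasing. Summing it (with $\Delta z^0=0$ from $\lambda^0=-\theta z^0$) bounds the distance at \emph{every} iteration by the initial distance, hence by a multiple of $\epsilon'$; this one estimate simultaneously yields the invariance needed to invoke Lemmas~\ref{lemma:nlp_sensitivity}--\ref{lemma:subproblem_sensitivity} for all $k$, keeps $D\Delta x^k,\Delta z^k$ (hence $p^k,d^k$) small, and directly gives the Q-sublinear rate as the ratio bound in the $\dagger$-norm. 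Two further discrepancies: your rate claim is a best-iterate $\min_{k\in[K]}$ bound of order $K^{-1/2}$, which is weaker than (and different from) the claimed sublinear convergence of the sequence itself; and your quadratic-growth upgrade of Lemma~\ref{lemma:lower_bound} asserts a $\rho$-independent third-order remainder, whereas the remainder coming from $\tfrac{\rho}{4}\sum_t\norm{c_t(x_t)}^2$ scales with $\rho$ at points off the manifold $c=0$, so the radius of the growth region may shrink as $\rho$ grows --- repairable, but not as stated.
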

\begin{proof}%
    Set $\bar{\rho} = \max\{\rho_1, \rho_2, \rho_3\}$, where the latter are defined in Lemma \ref{lemma:lower_bound}, \ref{lemma:upper_bound}, \ref{lemma:subproblem_sensitivity}.
    Also, assume for the moment that the other conditions of Lemma \ref{lemma:lower_bound} and \ref{lemma:upper_bound} are also satisfied for all $k \geq 1$; we shall shortly show how $\epsilon'$ can be chosen to ensure this.
    Adding the corresponding equations~\eqref{eq:lower_bound} and~\eqref{eq:upper_bound}, we obtain:
    \begin{equation*}
        \begin{aligned}
        &\underset{(d)}{\underbrace{(\lambda^* - \lambda^k)^\top p^k + \frac{\rho}{2}\norm{p^k}^2}}
        +\underset{(e)}{\underbrace{(d_z^k)^\top (z^k - z^*)}} \\
       &\quad +\underset{(f)}{\underbrace{\sum_{t = 1}^T (d_t^k)^\top (x_t^k - x_t^*)}}
       \end{aligned} \geq 0.
    \end{equation*}
   We now we have the following
    \begin{align*}
        (d) %
        &= \frac{1}{\rho} (\lambda^* - \lambda^k)^\top \Delta \lambda^k + \frac{1}{2\rho} \norm{\Delta \lambda^k}^2 \\
        &= \frac{1}{2\rho}\big[ \norm{\lambda^* - \lambda^{k-1}}^2 - \norm{\lambda^* - \lambda^k}^2  \big],
    \end{align*}
    where the first equality follows from definition \eqref{eq:perturbation:primal} and the $\lambda$-update formula \eqref{eq:lambda-update}, and the second equality follows from the first part of Lemma~\ref{lemma:intermediate}. Subsequently, 
    \begin{align*}
        (e) &= %
        \tau_z (\Delta z^k)^\top (z^* - z^k) \\
        &= \frac{\tau_z}{2} \big[ \norm{z^* - z^{k-1}}^2 - \norm{z^* - z^k}^2 - \norm{\Delta z^k}^2 \big],
    \end{align*}
    where again the first equality follows from definition \eqref{eq:perturbation:dual_z} and the second follows from the first part of Lemma~\ref{lemma:intermediate}. Finally, 
    \begin{align*}
        (f) %
        &= \sum_{t = 1}^T \big[ \rho A \Delta x^k + \rho \Delta z^k - (\rho + \tau_x) A_t \Delta x_t^k \big]^\top A_t (x_t^k - x_t^*) \\
        &= (f_1) + (f_2),
    \end{align*}
    where the first equality follows from definition \eqref{eq:perturbation:dual_xt} and after adding and subtracting $\rho A_t \Delta x_t^k$ from the latter. For the first component of $(f)$ we have
    \begin{align*}
        (f_1) &= -\rho (A \Delta x^k)^\top A (x^* - x^k) \\
        &\phantom{=} + (\rho + \tau_x) \sum_{t=1}^T (A_t \Delta x_t^k)^\top A_t(x_t^* - x_t^k) \\
        =& -\frac{\rho}{2} \left[ \norm{x_t^* - x_t^{k-1}}_{A^\top A}^2 - \norm{x_t^* - x_t^{k}}_{A^\top A}^2 - \norm{\Delta x_t^k}_{A^\top A}^2 \right] \\
        & + \frac{\rho+\tau_x}{2} \sum_{t=1}^T \left[
        \begin{aligned}
            & \norm{x_t^* - x_t^{k-1}}^2_{A_t^\top A_t} - \norm{x_t^* - x_t^{k}}^2_{A_t^\top A_t} \\
            & - \norm{\Delta x_t^k}^2_{A_t^\top A_t}
        \end{aligned} \right] \\
        =& \frac{1}{2} \big[ \norm{D(x^* - x^{k-1})}_R^2 - \norm{D(x^* - x^k)}_R^2 - \norm{D\Delta x^k}_R^2 \big] %
    \end{align*}
    where the second and third equalities follow from the first and second parts of Lemma~\ref{lemma:intermediate}, respectively. For the second component of $f$ we have
    \begin{align*}
        (f_2) &= \rho (\Delta z^k)^\top A (x^k - x^*) \\ %
        &= (\Delta z^k)^\top \Delta \lambda^k + \rho (\Delta z^k)^\top (z^* - z^k) \\
        &= -(\theta + \tau_z) \norm{\Delta z^k}^2 + \tau_z (\Delta z^k)^\top \Delta z^{k-1} \\
        &\phantom{=} +\frac{\rho}{2} \big[ \norm{z^* - z^{k-1}}^2 - \norm{z^* - z^k}^2 - \norm{\Delta z^k}^2 \big] \\
        &\leq \frac{\rho}{2} \big[ \norm{z^* - z^{k-1}}^2 - \norm{z^* - z^k}^2 \big] + \frac{\tau_z}{2} \norm{\Delta z^k}^2,
    \end{align*}
    where the second equality follows by substituting $Ax^* = b - z^*$ and $Ax^k = b - z^k + \frac{1}{\rho}\Delta \lambda^k$,
    the third equality follows from \eqref{eq:delta_lambda} and the first part of Lemma~\ref{lemma:intermediate},
    and the inequality follows by noting first that $2 (\Delta z^k)^\top \Delta z^{k-1} \leq \|{\Delta z^k} \|^2 + \| {\Delta z^{k-1}} \|^2$ and then that $\theta + (\tau_z/2) + \rho > 0$.
    
    Combining the equations for $(d), (e), (f), (f_1)$, the inequality for $(f_2)$, and $(d) + (e) + (f) \geq 0$, we obtain:
    \begin{equation}\label{eq:local:contraction}
        \begin{aligned}
    &\norm{(Dx^k, z^k, \lambda^k) - (Dx^*, z^*, \lambda^*)}^2_{*} + \tau_z \norm{\Delta z^k}^2 + \norm{D \Delta x^k}^2_R \\
    &\leq
    \norm{(Dx^{k-1}, z^{k-1}, \lambda^{k-1}) - (Dx^*, z^*, \lambda^*)}^2_{*} + \tau_z \norm{\Delta z^{k-1}}^2,
    \end{aligned}
    \end{equation}
    where we define the norm:
    \[
    \norm{(Dx, z, \lambda)}_{*} \coloneqq \sqrt{\norm{Dx}^2_{R} + (\rho + \tau_z)\norm{z}^2 + (1/\rho) \norm{\lambda}^2 }.
    \]
    Summing inequality \eqref{eq:local:contraction} over $k \in [j]$ and using
    $\Delta z^0 = (-\lambda^0 - \theta z^0)/\tau_z = 0$ (by hypothesis),
    we obtain for all $j \geq 1$:
    \begin{equation*}
        \begin{aligned}
            &\norm{(Dx^j, z^j, \lambda^j) - (Dx^*, z^*, \lambda^*)}^2_{*} + \tau_z \norm{\Delta z^j}^2 + \norm{D \Delta x^j}^2_R \\
            &\leq
            \norm{(Dx^0, z^0, \lambda^0) - (Dx^*, z^*, \lambda^*)}^2_{*}
        \end{aligned}
    \end{equation*}
    The equivalence of norms implies that whenever the Euclidean norm
    $\norm{(Dx^0, z^0, \lambda^0) - (Dx^*, z^*, \lambda^*)}$
    is less than $\epsilon'$,
    the right-hand side of the above inequality is also sufficiently small,
    and the terms on the left-hand side are even smaller.
    In particular, by choosing a small $\epsilon'$, we can ensure:
    \emph{(i)} $(Dx^j, z^j, \lambda^j)$ remains close to $(Dx^*, z^*, \lambda^*)$; and
    \emph{(ii)} $D\Delta x^j$ and $\Delta z^j$ are close to $0$, which implies
    $p^j, d^j$ are also close to $0$
    (see argument in proof of Lemma \ref{lemma:convergence_of_residuals}).
    Finally, note that \textit{(i)} and \textit{(ii)} satisfy the conditions of Lemma~\ref{lemma:subproblem_sensitivity} (which in turn allows us to satisfy those of Lemma~\ref{lemma:lower_bound}) and Lemma~\ref{lemma:upper_bound}, respectively.
    
    To be precise, the equivalence of norms implies there exist
    $0 < c_1 \leq c_2$
    such that
    $c_1 \norm{\cdot}_{*} \leq \norm{\cdot} \leq c_2 \norm{\cdot}_{*}$.
    Now set $\epsilon' = \min\{ \frac{c_1}{c_0}\epsilon_1, \frac{c_1}{c_2}\epsilon_5 \} < \epsilon_5$, where $\epsilon_1$ and $\epsilon_5$ are defined in Lemma~\ref{lemma:nlp_sensitivity} and~\ref{lemma:subproblem_sensitivity}, respectively,
    and $c_0 \coloneqq 4(\rho + \theta + \tau_x + \tau_z + 1)(\frac{1}{\rho} + 1)(\frac{1}{\sqrt{\tau_z}} + 1)(T + 1)(\norm{D} + 1)(c_2 + 1)$ is sufficiently large.
    Then, it can be verified that for all $k \geq 0$, we have:
    $
    (Dx^k, z^k, \lambda^k) \in B_{\epsilon_5} (Dx^*, z^*, \lambda^*)
    $,
    which verifies the conditions of Lemma \ref{lemma:subproblem_sensitivity} and hence of Lemma \ref{lemma:lower_bound},
    and
    $
    (p^k, d^k) \in B_{\epsilon_1}(0)
    $,
    which verifies the condition of Lemma \ref{lemma:upper_bound}.
    Now, since the conditions of Lemma \ref{lemma:iterates_and_nlp_perturbed} are also satisfied, and since $\{p^k\}$ and $\{d^k\}$ converge to $0$ (from Lemma~\ref{lemma:convergence_of_residuals}), this proves that
    $\{(x^k, z^k, \lambda^k)\} = \{(\hat{x}(p^k, d^k), \hat{z}(p^k, d^k), \hat{\lambda}(p^k, d^k))\}$ converge to $(\hat{x}(0, 0), \hat{z}(0, 0), \hat{\lambda}(0, 0)) = (x^*, z^*, \lambda^*)$ from Lemma~\ref{lemma:nlp_sensitivity}.
    Finally, the convergence rate follows from \eqref{eq:local:contraction} as follows:
    \begin{equation*}
        \frac{\norm{(Dx^k, z^k, \lambda^k, \Delta z^k) - (Dx^*, z^*, \lambda^*, 0)}^2_{\dagger}}{\norm{(Dx^{k-1}, z^{k-1}, \lambda^{k-1}, \Delta z^{k-1}) - (Dx^*, z^*, \lambda^*, 0)}^2_{\dagger}} \leq 1,
    \end{equation*}
    where we have defined the norm:
    \[
    \norm{(Dx, z, \lambda, \Delta z)}_{\dagger} \coloneqq \sqrt{\norm{(Dx, z, \lambda)}_{*}^2  + \tau_z \norm{\Delta z}^2}.
    \]
    The above is equivalent to the definition of Q-sublinear convergence \cite[Proposition~1.1d]{bertsekas2014constrained} which proves the claim.
\end{proof}

\ifCLASSOPTIONcaptionsoff
  \newpage
\fi

\bibliographystyle{IEEEtran}
\bibliography{./bibtex/bib/IEEEabrv,./bibtex/bib/bibliography}

% Generated by IEEEtran.bst, version: 1.14 (2015/08/26)
\begin{thebibliography}{10}
\providecommand{\url}[1]{#1}
\csname url@samestyle\endcsname
\providecommand{\newblock}{\relax}
\providecommand{\bibinfo}[2]{#2}
\providecommand{\BIBentrySTDinterwordspacing}{\spaceskip=0pt\relax}
\providecommand{\BIBentryALTinterwordstretchfactor}{4}
\providecommand{\BIBentryALTinterwordspacing}{\spaceskip=\fontdimen2\font plus
\BIBentryALTinterwordstretchfactor\fontdimen3\font minus
  \fontdimen4\font\relax}
\providecommand{\BIBforeignlanguage}[2]{{%
\expandafter\ifx\csname l@#1\endcsname\relax
\typeout{** WARNING: IEEEtran.bst: No hyphenation pattern has been}%
\typeout{** loaded for the language `#1'. Using the pattern for}%
\typeout{** the default language instead.}%
\else
\language=\csname l@#1\endcsname
\fi
#2}}
\providecommand{\BIBdecl}{\relax}
\BIBdecl

\bibitem{grune2017nonlinear}
L.~Gr{\"u}ne and J.~Pannek, \emph{{Nonlinear Model Predictive Control}}.\hskip
  1em plus 0.5em minus 0.4em\relax Springer, 2017.

\bibitem{gondzio2009exploiting}
J.~Gondzio and A.~Grothey, ``Exploiting structure in parallel implementation of
  interior point methods for optimization,'' \emph{Computational Management
  Science}, vol.~6, no.~2, pp. 135--160, 2009.

\bibitem{chiang2014structured}
N.~Chiang, C.~G. Petra, and V.~M. Zavala, ``Structured nonconvex optimization
  of large-scale energy systems using {PIPS-NLP},'' in \emph{2014 Power Systems
  Computation Conference}.\hskip 1em plus 0.5em minus 0.4em\relax IEEE, 2014,
  pp. 1--7.

\bibitem{kourounis2018toward}
D.~Kourounis, A.~Fuchs, and O.~Schenk, ``Toward the next generation of
  multiperiod optimal power flow solvers,'' \emph{IEEE Transactions on Power
  Systems}, vol.~33, no.~4, pp. 4005--4014, 2018.

\bibitem{bertsekas2014constrained}
D.~P. Bertsekas, \emph{Constrained optimization and {Lagrange} multiplier
  methods}.\hskip 1em plus 0.5em minus 0.4em\relax Academic press, 2014.

\bibitem{houska2016augmented}
B.~Houska, J.~Frasch, and M.~Diehl, ``An augmented lagrangian based algorithm
  for distributed nonconvex optimization,'' \emph{SIAM Journal on
  Optimization}, vol.~26, no.~2, pp. 1101--1127, 2016.

\bibitem{hours2016parametric}
J.-H. Hours and C.~N. Jones, ``A parametric nonconvex decomposition algorithm
  for real-time and distributed {NMPC},'' \emph{IEEE Transactions on Automatic
  Control}, vol.~61, no.~2, pp. 287--302, 2016.

\bibitem{boyd2011distributed}
S.~Boyd, N.~Parikh, E.~Chu, B.~Peleato, and J.~Eckstein, ``Distributed
  optimization and statistical learning via the alternating direction method of
  multipliers,'' \emph{Foundations and Trends in Machine Learning}, vol.~3,
  no.~1, pp. 1--122, 2011.

\bibitem{li2015global}
G.~Li and T.~K. Pong, ``Global convergence of splitting methods for nonconvex
  composite optimization,'' \emph{SIAM Journal on Optimization}, vol.~25,
  no.~4, pp. 2434--2460, 2015.

\bibitem{lin2015global}
T.~Lin, S.~Ma, and S.~Zhang, ``On the global linear convergence of the {ADMM}
  with multiblock variables,'' \emph{SIAM Journal on Optimization}, vol.~25,
  no.~3, pp. 1478--1497, 2015.

\bibitem{magnusson2016}
S.~Magn\'{u}sson, P.~C. Weeraddana, M.~G. Rabbat, and C.~Fischione, ``{On the
  Convergence of Alternating Direction Lagrangian Methods for Nonconvex
  Structured Optimization Problems},'' \emph{IEEE Transactions on Control of
  Network Systems}, vol.~3, no.~3, pp. 296--309, 2016.

\bibitem{hong2016convergence}
M.~Hong, Z.-Q. Luo, and M.~Razaviyayn, ``Convergence analysis of alternating
  direction method of multipliers for a family of nonconvex problems,''
  \emph{SIAM Journal on Optimization}, vol.~26, no.~1, pp. 337--364, 2016.

\bibitem{hong2017linear}
M.~Hong and Z.-Q. Luo, ``On the linear convergence of the alternating direction
  method of multipliers,'' \emph{Mathematical Programming}, vol. 162, no. 1-2,
  pp. 165--199, 2017.

\bibitem{melo2017iteration}
J.~G. Melo and R.~D. Monteiro, ``{Iteration-complexity of a Jacobi-type
  non-Euclidean ADMM for multi-block linearly constrained nonconvex
  programs},'' \emph{arXiv preprint arXiv:1705.07229}, 2017.

\bibitem{bolte2014proximal}
J.~Bolte, S.~Sabach, and M.~Teboulle, ``Proximal alternating linearized
  minimization for nonconvex and nonsmooth problems,'' \emph{Mathematical
  Programming}, vol. 146, no.~1, pp. 459--494, 2014.

\bibitem{liu2019linearized}
Q.~Liu, X.~Shen, and Y.~Gu, ``Linearized {ADMM} for nonconvex nonsmooth
  optimization with convergence analysis,'' \emph{IEEE Access}, vol.~7, pp.
  76\,131--76\,144, 2019.

\bibitem{wang2019global}
Y.~Wang, W.~Yin, and J.~Zeng, ``{Global convergence of ADMM in nonconvex
  nonsmooth optimization},'' \emph{Journal of Scientific Computing}, vol.~78,
  no.~1, pp. 29--63, 2019.

\bibitem{jiang2019structured}
B.~Jiang, T.~Lin, S.~Ma, and S.~Zhang, ``Structured nonconvex and nonsmooth
  optimization: algorithms and iteration complexity analysis,''
  \emph{Computational Optimization and Applications}, vol.~72, no.~1, pp.
  115--157, 2019.

\bibitem{sun2019two}
K.~Sun and X.~A. Sun, ``A two-level distributed algorithm for nonconvex
  constrained optimization,'' \emph{arXiv preprint arXiv:1902.07654}, 2019.

\bibitem{tang2021fast}
W.~Tang and P.~Daoutidis, ``Fast and stable nonconvex constrained distributed
  optimization: the {ELLADA} algorithm,'' \emph{Optimization and Engineering},
  pp. 1--43, 2021.

\bibitem{yang2020proximal}
Y.~Yang, G.~Hu, and C.~J. Spanos, ``A proximal linearization-based
  decentralized method for nonconvex problems with nonlinear constraints,''
  \emph{arXiv preprint arXiv:2001.00767}, 2020.

\bibitem{shi2020penalty}
Q.~Shi and M.~Hong, ``{Penalty Dual Decomposition Method for Nonsmooth
  Nonconvex Optimization—Part I: Algorithms and Convergence Analysis},''
  \emph{IEEE Transactions on Signal Processing}, vol.~68, pp. 4108--4122, 2020.

\bibitem{zhu2020first}
D.~Zhu, L.~Zhao, and S.~Zhang, ``A first-order primal-dual method for nonconvex
  constrained optimization based on the augmented lagrangian,'' \emph{arXiv
  preprint arXiv:2007.12219}, 2020.

\bibitem{harwood2021analysis}
S.~M. Harwood, ``Analysis of the alternating direction method of multipliers
  for nonconvex problems,'' in \emph{SN Operations Research Forum}, vol.~2,
  no.~1.\hskip 1em plus 0.5em minus 0.4em\relax Springer, 2021, pp. 1--29.

\bibitem{scutari2016parallel}
G.~Scutari, F.~Facchinei, and L.~Lampariello, ``Parallel and distributed
  methods for constrained nonconvex optimization—part i: Theory,'' \emph{IEEE
  Transactions on Signal Processing}, vol.~65, no.~8, pp. 1929--1944, 2016.

\bibitem{yi2019linear}
X.~Yi, S.~Zhang, T.~Yang, T.~Chai, and K.~H. Johansson, ``Linear convergence of
  first-and zeroth-order primal-dual algorithms for distributed nonconvex
  optimization,'' \emph{arXiv preprint arXiv:1912.12110}, 2019.

\bibitem{deng2017parallel}
W.~Deng, M.-J. Lai, Z.~Peng, and W.~Yin, ``Parallel multi-block {ADMM} with
  $o(1/k)$ convergence,'' \emph{Journal of Scientific Computing}, vol.~71,
  no.~2, pp. 712--736, 2017.

\bibitem{chen1994proximal}
G.~Chen and M.~Teboulle, ``A proximal-based decomposition method for convex
  minimization problems,'' \emph{Mathematical Programming}, vol.~64, no.~1, pp.
  81--101, 1994.

\bibitem{banjac2017jacobiConvex}
G.~Banjac, K.~Margellos, and P.~J. Goulart, ``{On the Convergence of a
  Regularized {Jacobi} Algorithm for Convex Optimization},'' \emph{IEEE
  Transactions on Automatic Control}, vol.~63, no.~4, pp. 1113--1119, 2018.

\bibitem{chatzipanagiotis2017convergence}
N.~Chatzipanagiotis and M.~M. Zavlanos, ``{On the convergence of a distributed
  Augmented Lagrangian method for nonconvex optimization},'' \emph{IEEE
  Transactions on Automatic Control}, vol.~62, no.~9, pp. 4405--4420, 2017.

\bibitem{RockafellarWets1998}
R.~T. Rockafellar and R.~J.-B. Wets, \emph{Variational Analysis}.\hskip 1em
  plus 0.5em minus 0.4em\relax Springer, Berlin, Heidelberg, 1998.

\bibitem{nocedal_wright_2006}
J.~Nocedal and S.~Wright, \emph{Numerical optimization}.\hskip 1em plus 0.5em
  minus 0.4em\relax Springer New York, 2006.

\bibitem{frank2016introduction}
S.~Frank and S.~Rebennack, ``An introduction to optimal power flow: Theory,
  formulation, and examples,'' \emph{IIE transactions}, vol.~48, no.~12, pp.
  1172--1197, 2016.

\bibitem{maffei2014optimal}
A.~Maffei, D.~Meola, G.~Marafioti, G.~Palmieri, L.~Iannelli, G.~Mathisen,
  E.~Bjerkan, and L.~Glielmo, ``Optimal power flow model with energy storage,
  an extension towards large integration of renewable energy sources.''
  \emph{IFAC Proceedings Volumes}, vol.~47, no.~3, pp. 9456--9461, 2014, 19th
  IFAC World Congress.

\bibitem{castillo2016unit}
A.~Castillo, C.~Laird, C.~A. Silva-Monroy, J.-P. Watson, and R.~P. O’Neill,
  ``The unit commitment problem with ac optimal power flow constraints,''
  \emph{IEEE Transactions on Power Systems}, vol.~31, no.~6, pp. 4853--4866,
  2016.

\bibitem{molzahn2019survey}
D.~K. Molzahn, I.~A. Hiskens \emph{et~al.}, ``A survey of relaxations and
  approximations of the power flow equations,'' \emph{Foundations and
  Trends{\textregistered} in Electric Energy Systems}, vol.~4, no. 1-2, pp.
  1--221, 2019.

\bibitem{zimmerman2010matpower}
R.~D. Zimmerman, C.~E. Murillo-S{\'a}nchez, and R.~J. Thomas, ``Matpower:
  Steady-state operations, planning, and analysis tools for power systems
  research and education,'' \emph{IEEE Transactions on power systems}, vol.~26,
  no.~1, pp. 12--19, 2010.

\bibitem{nyiso}
{ISO New England}, ``{Hourly Real-Time System Demand},''
  \url{https://www.iso-ne.com/isoexpress/web/reports/load-and-demand/-/tree/dmnd-rt-hourly-sys},
  {Online}.

\bibitem{DunningHuchetteLubin2017}
I.~Dunning, J.~Huchette, and M.~Lubin, ``{JuMP}: A modeling language for
  mathematical optimization,'' \emph{SIAM Review}, vol.~59, no.~2, pp.
  295--320, 2017.

\bibitem{wachter2006implementation}
A.~W{\"a}chter and L.~T. Biegler, ``On the implementation of an interior-point
  filter line-search algorithm for large-scale nonlinear programming,''
  \emph{Mathematical programming}, vol. 106, no.~1, pp. 25--57, 2006.

\bibitem{summit}
S.~S. Vazhkudai \emph{et~al.}, ``{The Design, Deployment, and Evaluation of the
  CORAL Pre-Exascale Systems},'' in \emph{SC18: International Conference for
  High Performance Computing, Networking, Storage and Analysis}, 2018, pp.
  661--672.

\bibitem{fiacco1976sensitivity}
A.~V. Fiacco, ``Sensitivity analysis for nonlinear programming using penalty
  methods,'' \emph{Mathematical programming}, vol.~10, no.~1, pp. 287--311,
  1976.

\bibitem{bertsekas1999nonlinear}
D.~Bertsekas, \emph{Nonlinear programming}.\hskip 1em plus 0.5em minus
  0.4em\relax Athena Scientific, 1999.

\end{thebibliography}

\begin{IEEEbiographynophoto}{Anirudh Subramanyam}
    is a postdoctoral researcher in the Mathematics and Computer Science Division at Argonne National Laboratory.
    He obtained his bachelor’s degree from the Indian Institute of Technology, Bombay and his Ph.D. from Carnegie Mellon University, both in chemical engineering. His research interests are in computational methods for nonlinear and discrete optimization under uncertainty with applications in energy, transportation and process systems.
\end{IEEEbiographynophoto}
\begin{IEEEbiographynophoto}{Youngdae Kim}
    is a postdoctoral researcher in the Mathematics and Computer Science Division at Argonne National Laboratory.
    He received the B.Sc. and M.Sc. degrees in computer science and engineering from Pohang University of Science and Technology, Pohang, South Korea, and the Ph.D. degree in computer sciences from the University of Wisconsin-Madison.
    His research interests include distributed optimization methods using hardware accelerators with applications in power systems.
\end{IEEEbiographynophoto}
\begin{IEEEbiographynophoto}{Michel Schanen}
    is an assistant computer scientist in the Mathematics and Computer Science Division at Argonne National Laboratory. He received his M.Sc. in 2008 from Rheinisch-Westfälische Technische Hochschule (RWTH) Aachen, Germany and in 2014 his Ph.D. in computer science from RWTH Aachen, Germany. His interests lie in automatic differentiation and applications in large-scale computing.
\end{IEEEbiographynophoto}
\begin{IEEEbiographynophoto}{Fran\c{c}ois Pacaud}
    is a postdoctoral researcher in the Mathematics and Computer Science Division at Argonne National Laboratory.
    He obtained his M.Sc. in 2015 from Mines ParisTech, Paris, France, and in 2018 his Ph.D. in applied mathematics from the \'{E}cole des Ponts ParisTech, Paris, France. His research 
    interests encompasses stochastic and nonlinear optimization, with
    application in energy systems.
\end{IEEEbiographynophoto}
\begin{IEEEbiographynophoto}{Mihai Anitescu}
    is a senior computational mathematician in the Mathematics
    and Computer Science Division at Argonne National Laboratory and a professor in the Department of Statistics at the University of Chicago. He obtained his engineer diploma (electrical engineering) from the Polytechnic University of Bucharest in 1992 and his Ph.D. in applied mathematical and computational sciences from the University of Iowa in 1997. He specializes in the areas of numerical optimization, computational science, numerical analysis and uncertainty quantification in which he has published more than 100 papers in scholarly journals and book chapters.
    He has been recognized for his work in applied mathematics by his selection as a SIAM Fellow in 2019.
\end{IEEEbiographynophoto}
\vfill

\noindent\fbox{\parbox{\columnwidth}{\footnotesize
        The submitted manuscript has been created by UChicago Argonne, LLC, Operator of Argonne National Laboratory (``Argonne''). Argonne, a U.S. Department of Energy Office of Science laboratory, is operated under Contract No. DE-AC02-06CH11357. The U.S. Government retains for itself, and others acting on its behalf, a paid-up nonexclusive, irrevocable worldwide license in said article to reproduce, prepare derivative works, distribute copies to the public, and perform publicly and display publicly, by or on behalf of the Government. The Department of Energy will provide public access to these results of federally sponsored research in accordance with the DOE Public Access Plan (http://energy.gov/downloads/doe-public-access-plan).}
}

\end{document}